\newtheorem{theorem}{Theorem}[section]
\newtheorem{lemma}[theorem]{Lemma}
\newtheorem{definition}[theorem]{Definition}
\newtheorem{remark}[theorem]{Remark}
\newtheorem{example}[theorem]{Example}
\newtheorem{prop}[theorem]{Proposition}
\newcommand{\X}{\mathbb{X}}
\newcommand{\z}{z}
\newcommand{\jty}{J^\infty}
\newcommand{\marginnote}[1]{\marginpar{\raggedright\tiny{#1}}} 
\newcommand{\val}[1]{[\![{#1}]\!]}
\newcommand{\descr}[1]{(\![{#1}]\!)}
\newcommand{\starfor}{{/\!\!}_{\star}}
\newcommand{\circfor}{{/\!}_{\circ}}
\newcommand{\starback}{\backslash_{\star}}
\newcommand{\circback}{\backslash_{\circ}}
\newcommand{\bba}{\mathbb{A}}
\newcommand{\bbA}{\mathbb{A}}
\newcommand{\jira}{J^{\infty}(\bba)}
\newcommand{\mira}{M^{\infty}(\bba)}
\renewcommand{\phi}{\varphi}
\renewcommand{\emptyset}{\varnothing}
\newcommand{\leftsquigarrow}{\mathbin{\rotatebox[origin=c]{180}{$\rightsquigarrow$}}}
\renewcommand{\epsilon}{\varepsilon}
\def\aol{\rule[0.5865ex]{1.38ex}{0.1ex}}
\def\pdra{\mbox{$\,>\mkern-8mu\raisebox{-0.065ex}{\aol}\,$}}
\newcommand{\upsE}{\raisebox{1ex}{\rotatebox{180}{$e$}}}
\title{Non-distributive logics: from semantics to meaning}
\author{Willem Conradie$^{a}$, Alessandra Palmigiano$^{b,c}$ and Claudette Robinson$^{c}$ and Nachoem Wijnberg$^{d,e}$,\\ \\ 
$^a$School of Mathematics, University of the Witwatersrand \\
$^{b}$School of Business and Economics, Vrije Universiteit, Amsterdam\\
$^c$Department of Mathematics and Applied Mathematics, University of Johannesburg\\
$^{b}$ Amsterdam  Business School,  University of Amsterdam\\
$^c$School of Business and Economics, University of Johannesburg\\
} 
\date{}
\begin{document}

\maketitle 
\begin{abstract}
We discuss an ongoing line of research in the relational (non topological) semantics of non-distributive logics. The developments we consider are technically  rooted in dual characterization results and insights from unified correspondence theory. However, they also have broader, conceptual ramifications for the intuitive meaning of non-distributive logics, which we explore. \\
{\bf Keywords:} Non-distributive modal logic, Polarity-based semantics,  Graph-based semantics, Formal Concept Analysis, Rough concepts, Categorization theory.
\end{abstract}

\section{Introduction}\label{sec:Introduction}
The term `non-distributive logics' (cf.~\cite{chiara1976general}) refers to the wide family of non-classical propositional logics in which the distributive laws $\alpha \wedge (\beta\vee \gamma) \vdash (\alpha\wedge \beta)\vee (\alpha\wedge \gamma)$ and $(\alpha\vee \beta)\wedge (\alpha\vee \gamma)\vdash \alpha \vee (\beta\wedge \gamma) $ do not need to be valid. 
Since the rise of very well known instances such as quantum logic \cite{mackey1957quantum}, interest in non-distributive logics has been building steadily over the years. This interest has been motivated by insights from a range of fields in logic and neighbouring disciplines. Techniques and ideas have come from  pure mathematical areas such as lattice theory, duality and representation (cf.~\cite{urquhart1978topological, dunn1991gaggle, hartung1992topological, allwein1993duality, Pl95, hartonasDunn1997stone, hartonas1997duality, GH01}, and more recently \cite{dunn2005canonical, gehrke2006generalized, craig2013fresh, moshier2014topological, moshier2014topologicalII, CGH15, hartonas2017order, hartonas2018order, hartonas2018stone}), and areas in mathematical logic such as algebraic proof theory (cf.~\cite{Belardinelli2004, GaJiKoOn07, ciabattoni2012algebraic}), but also from the philosophical and formal foundations of quantum physics \cite{birkhoff1936logic, Goldblatt:Ortho:74, chiara1976general}, philosophical logic \cite{routley1973semantics} theoretical computer science and formal linguistics \cite{lambek1958mathematics, girard1987linear, moortgat1997categorial}.

The present paper discusses an ongoing line of research in the relational (non topological) semantics of non-distributive logics. The developments we will consider are technically  rooted in dual characterization results and insights from unified correspondence theory \cite{UnifiedCor, CoPa:non-dist, conradie2016constructive}. However, they also have broader, {\em conceptual} ramifications for the {\em intuitive meaning} of non-distributive logics. Specifically, the slogan from the title, `from semantics to meaning',  intends to convey the idea that,  not dissimilarly from the conceptual contribution of Kripke frames to the intuitive understanding of modal logics in various signatures, the relational semantics of non-distributive logics can help to illuminate what these logics are about at a more fundamental and conceptual level than what the technical insights, constraints and desiderata typically yield. For instance, a natural question is whether relational semantics of (some) non-distributive logics can  provide an intuitive explanation of why, or under which circumstances, the failure of distributivity is a reasonable and desirable feature; i.e.~whether a given relational semantics supports one or more intuitive interpretations under which the failure of distributivity is an essential part of what `correct reasoning patterns' are in  certain specific contexts. Perhaps even more interestingly, whether relational semantics can be used  to unambiguously identify those contexts. Such an intuitive explanation also requires a different interpretation of the connectives $\vee$ and $\wedge$ which coherently fits with the interpretation of the other logical connectives, and which coherently extends to the meaning of axioms in various signatures. 

The starting point and background of the present paper is the theory of those non-distributive logics which arise as the logics canonically associated with varieties of normal lattice expansions (cf.~Section \ref{sec:LE:logics}), i.e.~algebras based on general lattices with additional connectives of any finite arity, which satisfy certain finite-distributivity properties coordinate-wise. Throughout the paper, these logics will be referred to as  {\em normal LE-logics}, or just {\em LE-logics}. By their definition, LE-logics are presented via their algebraic semantics; however, relational semantics for LE-logics in any signature can be introduced by a process of {\em dual characterization} which pivots on well-known adjunctions and representation theorems \cite{birkhoff1940lattice, Pl95} {\em complete} lattices. In Section \ref{sec:Interpretation}, we illustrate the dual characterization methodology in detail, and apply it to obtain the definitions of  two relational semantic frameworks for LE-logics: the {\em polarity-based} frames and the {\em graph-based} frames. Although  polarity-based  and  graph-based semantics are tightly connected and arise from the application of the same methodology, they give rise to two radically different intuitive interpretations of  LE-logics: namely, the polarity-based semantics supports the interpretation of LE-logics as logics of {\em formal concepts} (and consequently, of specific LE-signatures such as those of  lattice-based normal modal logics as e.g.~epistemic logics of categories and concepts); the graph-based semantics supports a view of LE-logics as {\em hyper-constructivist} logics, i.e.~logics in which the principle of excluded middle fails at the {\em meta-linguistic level} (in the sense that, at states in graph-based models, formulas can be satisfied, refuted or {\em neither}), and hence their propositional base  generalizes intuitionistic logic in the same way in which intuitionistic logic generalizes classical logic.  Consequently, we will argue that graph-based semantics supports the interpretation of specific LE-signatures such as those of  lattice-based normal modal logics as e.g.~epistemic logics of {\em informational entropy}). All this is discussed in  Section \ref{sec:Interpretation}. 

In the present paper, we will only scratch the surface of a broad research program, and hint at the existence of an extremely rich conceptual and technical landscape. Rather than giving an exhaustive survey of the existing results in the relational semantics of non-distributive logic, we focus on: (a) highlighting the methodological aspects involved in the definition of the relational semantics for LE-logics; this methodology can be applied also to different semantics than the ones discussed in the present paper (and it would be very interesting to discover which possibly different interpretations of LE-logics are supported by other semantics), and (b) substantiating the claim that relational semantics for non-distributive logics can bring the same types of benefits that Kripke semantics has brought to modal logic, both from a technical point of view (concerning results such as the finite model property, correspondence theory, transfer results driven by G\"odel-McKinsey-Tarski-style translations, Goldblatt-Thomason-style characterization theorems, semantic cut elimination) and from a conceptual viewpoint, concerning the extent to which these logics are suited to model a range of situations and phenomena.
Finally, the picture already emerging from the preliminary account  presented in this paper suggests that, similarly to what happens in the case of modal logics, many important results and insights can be obtained in complete uniformity, also across different types of relational semantics, so that various proposals and solutions developed for specific signatures (e.g.~Routley-Meyer semantics for substructural logics) can be systematically connected and extended to other signatures. 

\section{LE-logics}\label{sec:LE:logics}

Informally, LE-logics are the logics of (varieties of) lattice expansions: 
for disjoint sets of connectives $\mathcal{F}$ and $\mathcal{G}$, a {\em  lattice expansion} (abbreviated as LE) is a tuple $\bba = (\mathbb{L}, \mathcal{F}^\bbA, \mathcal{G}^\bbA)$ such that $\mathbb{L}$ is a bounded  lattice, $\mathcal{F}^\bbA = \{f^\bbA\mid f\in \mathcal{F}\}$ and $\mathcal{G}^\bbA = \{g^\bbA\mid g\in \mathcal{G}\}$, such that every $f^\bbA\in\mathcal{F}^\bbA$ (resp.\ $g^\bbA\in\mathcal{G}^\bbA$) is an $n_f$-ary (resp.\ $n_g$-ary) operation on $\bbA$. 
We say that an LE is {\em normal} if every $f^\bbA\in\mathcal{F}^\bbA$ (resp.\ $g^\bbA\in\mathcal{G}^\bbA$) is coordinatewise either monotone or antitone, and preserves finite (hence also empty) joins (resp.\ meets) in each monotone coordinate and reverses finite (hence also empty) meets (resp.\ joins) in each antitone coordinate.  Henceforth, since every LE is assumed to be normal, the adjective will be dropped.
An LE as above is {\em complete} if $\mathbb{L}$ is a complete lattice, and every $f^\bbA\in\mathcal{F}^\bbA$ (resp.\ $g^\bbA\in\mathcal{G}^\bbA$) preserves arbitrary joins (resp.\ meets) in each monotone coordinate and reverses arbitrary meets (resp.\ joins) in each antitone coordinate.




For any  algebraic LE-signature as above, 
the set of formulas of the language $\mathcal{L}_\mathrm{LE}: = \mathcal{L}_\mathrm{LE}(\mathcal{F},\mathcal{G})$ over a denumerable set $\mathsf{Prop}$ of proposition letters 
is defined recursively as follows:
\[
\phi ::= p \mid \bot \mid \top \mid \phi \wedge \phi \mid \phi \vee \phi \mid f(\overline{\phi}) \mid g(\overline{\phi})
\]
where $p \in \mathsf{Prop}$, $f \in \mathcal{F}$ and $g \in \mathcal{G}$.

\begin{example}\label{example:LE languages} 
	The language $\mathcal{L}_{\mathrm{Bi}}$ of \emph{bi-intuitionistic logic} \cite{rauszer1974formalization} is obtained by instantiating $\mathcal{F}: = \{\pdra\}$   and $\mathcal{G}: = \{\rightarrow\}$. Both connectives are binary, and are antitone in the first coordinate and monotone in the second one.  The language $\mathcal{L}_{\mathrm{IL}}$ of \emph{intuitionistic logic} is the $\{\pdra\}$-free fragment of $\mathcal{L}_{\mathrm{Bi}}$. The language $\mathcal{L}_{\mathrm{DML}}$ of \emph{distributive modal logic} (cf.\ \cite{GNV}, \cite{ALBAPaper}) is obtained by letting $\mathcal{F}: = \{\Diamond, {\lhd}\}$ and and   $\mathcal{G}: = \{\Box, {\rhd}\}$; all connectives are  unary, and  $\Diamond$ and $\Box$ are monotone while $\lhd$ and $\rhd$ negative. The language $\mathcal{L}_{\mathrm{PML}}$ of  {\em positive modal logic} \cite{DunnPosML} is the $\{{\lhd}, {\rhd} \}$-free fragment of $\mathcal{L}_{\mathrm{DML}}$. The language $\mathcal{L}_{\mathrm{BiML}}$ of \emph{bi-intuitionistic modal logic} \cite{wolter1998CoImplication} is obtained by instantiating $\mathcal{F}: = \{\pdra, \Diamond\}$ and $\mathcal{G}: = \{\rightarrow, \Box\}$. The language $\mathcal{L}_{\mathrm{IML}}$ of \emph{intuitionistic modal logic} \cite{fischerservi1977modal} is the $\{\pdra\}$-free fragment of $\mathcal{L}_{\mathrm{BiML}}$. The language $\mathcal{L}_{\mathrm{SDM}}$ of  {\em semi-De Morgan logic}  (cf.\ \cite{sankappanavar1987semi}) coincides with the language of Orthologic \cite{Goldblatt:Ortho:74} and is the $\{\Diamond, \Box, \lhd\}$-free fragment of $\mathcal{L}_{\mathrm{DML}}$. The language $\mathcal{L}_{\mathrm{FL}}$ of the {\em Full Lambek calculus} \cite{la61, GaJiKoOn07} is obtained by instantiating $\mathcal{F}: = \{e, \circ\}$ with $e$ nullary  and $\circ$ binary and monotone in both coordinates,  and   $\mathcal{G}: = \{\circfor, \circback\}$ with $\circfor$ (resp.~$\circback$) binary and monotone in its first (resp.~second) coordinate and antitone in its second (resp.~first) one. The language $\mathcal{L}_{\mathrm{LG}}$ of the {\em Lambek-Grishin calculus} (cf.\ \cite{Moortgat}) is  obtained by letting $\mathcal{F}: = \{e, \circ, \starfor, \starback \}$   and   $\mathcal{G} := \{\upsE, \star, \circfor, \circback\}$. Here $\upsE$ is nullary, $\star$ is binary and monotone in both coordinates, and $\starfor$ (resp.~$\starback$) binary and monotone in its first (resp.~second) coordinate and antitone in its second (resp.~first) one.  The language $\mathcal{L}_{\mathrm{MALL}}$ of the  {\em multiplicative-additive fragment of linear logic} (cf.\ \cite{GaJiKoOn07}) is the $\{\star, \starfor, \starback, \circfor\}$-free fragment of $\mathcal{L}_{\mathrm{LG}}$.
\end{example}
In what follows, we will mainly focus on LE-signatures the `expansion' part of which consists of unary connectives, and take them as our running examples throughout the remainder of the paper (see \cite{greco2018algebraic,CoPa:non-dist} for a treatment of arbitrary signatures). The basic framework is given by 
%
%
the basic normal {\em non-distributive modal logic} $\mathbf{L} :  = \mathbf{L}_{\mathcal{L}_{\mathrm{DML}}}$, defined as the smallest set of sequents $\phi\vdash\psi$ in the language $\mathcal{L}_\mathrm{DML}$ of distributive modal logic,  containing the following axioms:
\begin{itemize}
	\item Sequents for propositional connectives:
	\begin{align*}
	&p\vdash p, && \bot\vdash p, && p\vdash \top, & &  &\\
	&p\vdash p\vee q, && q\vdash p\vee q, && p\wedge q\vdash p, && p\wedge q\vdash q, &
	\end{align*}
	\item Sequents for modal operators: 
	\begin{align*}
	&\top\vdash \Box \top &&
	\Box p\wedge \Box q \vdash \Box ( p\wedge q)
	&\Diamond \bot\vdash \bot &&
	\Diamond ( p \vee q)\vdash \Diamond p \vee \Diamond  q  \\
	  &\top\vdash {\rhd} \bot &&
	  {\rhd}p \land {\rhd}q \vdash {\rhd }(p\vee q )
	   & {\lhd}\top\vdash \bot &&
	   {\lhd} (p \land q) \vdash {\lhd } p \vee {\lhd} q \\
	\end{align*}
\end{itemize}
and  closed under the following inference rules:
\begin{displaymath}
\frac{\phi\vdash \chi\quad \chi\vdash \psi}{\phi\vdash \psi}
\quad\quad
\frac{\phi\vdash \psi}{\phi\left(\chi/p\right)\vdash\psi\left(\chi/p\right)}
\quad\quad
\frac{\chi\vdash\phi\quad \chi\vdash\psi}{\chi\vdash \phi\wedge\psi}
\quad\quad
\frac{\phi\vdash\chi\quad \psi\vdash\chi}{\phi\vee\psi\vdash\chi}
\quad\quad
\end{displaymath}
\begin{displaymath}
\frac{\phi\vdash\psi}{\Box \phi\vdash \Box \psi}
\quad\quad
\frac{\phi\vdash\psi}{\Diamond \phi\vdash \Diamond \psi}
\quad\quad
\frac{\phi\vdash\psi}{{\rhd} \psi\vdash {\rhd} \phi}
\quad\quad
\frac{\phi\vdash\psi}{{\lhd} \psi\vdash {\lhd} \phi}
\end{displaymath}



\section{Polarity-based and graph-based semantics of LE-logics}
In the present section, we discuss two types of relational semantics for LE-logics, starting with the common methodology used to define them, both for specific LE-signatures \cite{gehrke2006generalized, conradie2015relational, graph-based-wollic} and for arbitrary ones \cite{CoPa:non-dist,greco2018algebraic,conradie2018goldblatt}.
This methodology uses adjunctions involving complete lattices to define the interpretation of LE-formulas on relational structures by `translating' their interpretation on algebras by means of a dual characterization process on which we expand and exemplify in the next subsection (cf.~also discussions in \cite{UnifiedCor,CoPa:non-dist}). The resulting semantic environments are those we discuss below and in the appendix. 

\subsection{Polarities, reflexive graphs and complete lattices}
\label{ssec:polarities graphs}
\paragraph{Polarities and representation of complete lattices.} A {\em polarity} \cite{ganter2012formal} is a structure $\mathbb{P} = (A, X, I)$ such that $A$ and $X$ are sets and $I\subseteq A\times X$ is a binary relation. 

For every polarity $\mathbb{P}$, maps $(\cdot)^\uparrow: \mathcal{P}(A)\to \mathcal{P}(X)$ and $(\cdot)^\downarrow: \mathcal{P}(X)\to \mathcal{P}(A)$ can be defined as follows:
$B^\uparrow: = \{x\in X \mid\forall a \in A(a \in B\to aIx)\}$ and $Y^\downarrow: = \{a\in A\mid \forall x \in X(x \in Y \to aIx)\}$. The maps $(\cdot)^\uparrow$ and $(\cdot)^\downarrow$ form a \textit{Galois connection} between $(\mathcal{P}(A), \subseteq)$ and $(\mathcal{P}(X), \subseteq)$, i.e. $Y \subseteq B^\uparrow$ iff $B\subseteq Y^\downarrow$
for all $B \in \mathcal{P}(A)$ and $Y\in \mathcal{P}(X)$. 

A {\em formal concept} of $\mathbb{P}$ is a pair 
$c = (\val{c}, \descr{c})$ such that $\val{c}\subseteq A$, $\descr{c}\subseteq X$, and $\val{c}^{\uparrow} = \descr{c}$ and $\descr{c}^{\downarrow} = \val{c}$. It follows immediately from this definition that if $(\val{c}, \descr{c})$ is a formal concept, then $\val{c}^{\uparrow\downarrow} = \val{c}$ and $\descr{c}^{\downarrow\uparrow} = \descr{c}$, that is $\val{c}$ and $\descr{c}$ are \textit{Galois-stable}.  The set $\mathbb{L}(\mathbb{P})$  of the formal concepts of $\mathbb{P}$ can be partially ordered as follows: for any $c, d\in \mathbb{L}(\mathbb{P})$, \[c\leq d\quad \mbox{ iff }\quad \val{c}\subseteq \val{d} \quad \mbox{ iff }\quad \descr{d}\subseteq \descr{c}.\]
With this order, $\mathbb{L}(\mathbb{P})$ is a complete lattice such that, for any $\mathcal{X}\subseteq  \mathbb{L}(\mathbb{P})$,
\begin{eqnarray*}
\bigwedge \mathcal{X}& =  &(\bigcap \{\val{c}\mid c\in \mathcal{X}\}, (\bigcap \{\val{c}\mid c\in \mathcal{X}\})^\uparrow)\\
 \bigvee \mathcal{X} & =  &( (\bigcap \{\descr{c}\mid c\in \mathcal{X}\})^\downarrow, \bigcap \{\descr{c}\mid c\in \mathcal{X}\}). 
\end{eqnarray*}
This complete lattice is referred to as the {\em concept lattice} $\mathbb{P}^+$ of $\mathbb{P}$. Moreover,
\begin{prop}
\label{prop:join and meet generators}
For any polarity $\mathbb{P} = (A, X, I)$, the complete lattice $\mathbb{P}^+$ is completely join-generated by the set $\{\mathbf{a}: = (a^{\uparrow\downarrow}, a^{\uparrow})\mid a\in A\}$ and is completely meet-generated by the set $\{\mathbf{x}: =(x^{\downarrow},x^{\downarrow\uparrow})\mid x\in X\}$.
\end{prop}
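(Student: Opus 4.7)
The plan is to verify the two claims separately, exploiting the symmetry of the polarity structure: the meet-generation statement is formally dual to the join-generation statement (swap $A\leftrightarrow X$, $(\cdot)^\uparrow\leftrightarrow(\cdot)^\downarrow$, and pass to the opposite order). Hence I would spell out only the join case in detail and invoke duality for the other.

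First, I would check that the indexing family is well-defined: for every $a\in A$, the pair $\mathbf{a}=(a^{\uparrow\downarrow},a^\uparrow)$ (where $a^\uparrow$ is shorthand for $\{a\}^\uparrow$, etc.) is a formal concept. This amounts to verifying $(a^{\uparrow\downarrow})^\uparrow=a^\uparrow$ and $(a^\uparrow)^\downarrow=a^{\uparrow\downarrow}$; the second is a tautology and the first is the standard identity $f\circ g\circ f=f$, valid for any Galois connection $(f,g)$, here instantiated to $(\cdot)^\uparrow$ and $(\cdot)^\downarrow$. Dually, $\mathbf{x}\in \mathbb{P}^+$ for every $x\in X$.

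Next, for the join-generation claim, I would fix an arbitrary concept $c=(\val{c},\descr{c})\in\mathbb{P}^+$ and set $\mathcal{X}:=\{\mathbf{a}\mid a\in\val{c}\}$. Applying the displayed formula for $\bigvee\mathcal{X}$ given in the excerpt, it suffices to establish the identity
\[
\bigcap\{a^\uparrow\mid a\in\val{c}\}\;=\;\val{c}^\uparrow\;=\;\descr{c},
\]
where the first equality is just the definition of $(\cdot)^\uparrow$ on subsets of $A$ (an $x$ lies in the intersection iff $aIx$ for every $a\in\val{c}$, iff $x\in\val{c}^\uparrow$), and the second equality is built into the definition of a formal concept. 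Combining this with $\descr{c}^\downarrow=\val{c}$ (again by definition of formal concept) gives $\bigvee\mathcal{X}=(\descr{c}^\downarrow,\descr{c})=(\val{c},\descr{c})=c$, as required. The meet-generation claim follows by the dual argument: setting $\mathcal{Y}:=\{\mathbf{x}\mid x\in\descr{c}\}$, the formula for $\bigwedge\mathcal{Y}$ reduces to $\bigcap\{x^\downarrow\mid x\in\descr{c}\}=\descr{c}^\downarrow=\val{c}$, and then the formula closes the computation.

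I do not expect a genuine obstacle: the result is essentially the basic representation theorem of Formal Concept Analysis, and every identity needed is either built into the definition of a formal concept or a standard consequence of the Galois connection $((\cdot)^\uparrow,(\cdot)^\downarrow)$ between $(\mathcal{P}(A),\subseteq)$ and $(\mathcal{P}(X),\subseteq)$. The only point deserving mild care is the notational bookkeeping between elements $a\in A$ (resp.~$x\in X$) and their singletons, together with the order-reversing behaviour of $(\cdot)^\uparrow$ and $(\cdot)^\downarrow$, which explains why a family indexed over $\val{c}$ (not over its complement) is what we need on the join side.
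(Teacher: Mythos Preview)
Your proof is correct and follows essentially the same route as the paper: both arguments fix a concept $c$, take the family $\{\mathbf{a}\mid a\in\val{c}\}$ (resp.\ $\{\mathbf{x}\mid x\in\descr{c}\}$), and use that the Galois-adjoint maps $(\cdot)^\uparrow$, $(\cdot)^\downarrow$ are completely join-reversing to identify $\bigcap_{a\in\val{c}} a^\uparrow$ with $\val{c}^\uparrow=\descr{c}$, whence the join (resp.\ meet) formula closes. Your write-up is slightly more explicit in checking that each $\mathbf{a}$ is indeed a formal concept, which the paper leaves tacit.
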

\begin{proof}
Every formal concept is both of the form $c = (Y^{\downarrow},Y^{\downarrow\uparrow})$ for some $Y\subseteq X$ and of the form $c = (B^{\uparrow\downarrow}, B^{\uparrow})$ for some $B\subseteq A$. Since $Y = \bigcup\{x\mid x\in Y\}$ and  $B = \bigcup\{a\mid a\in B\}$, and $(\cdot)^\downarrow: \mathcal{P}(X)\to \mathcal{P}(A)$ and $(\cdot)^\uparrow: \mathcal{P}(A)\to \mathcal{P}(X)$, being  Galois-adjoint, are completely join-reversing, $\val{c} = Y^{\downarrow} = \bigcap\{ \val{\mathbf{x}}\mid x\in Y\} = \val{\bigwedge \{\mathbf{x}\mid x\in Y\}}$ and $\val{c} = B^{\uparrow\downarrow} = (\bigcap\{ \descr{\mathbf{a}}\mid a\in B\})^{\downarrow} = \val{\bigvee \{\mathbf{a}\mid a\in B\}}$, as required. 
\end{proof}

The following theorem is a converse to the proposition above.\footnote{For every partial order $(X, \leq)$ and every $x\in X$, we let $x{\uparrow}: = \{y\in X\mid x\leq y\}$ and $x{\downarrow}: = \{y\in X\mid y\leq x\}$.}
\begin{theorem}[Birkhoff's representation theorem] Any complete lattice $\mathbb{L}$ is isomorphic to the concept lattice $\mathbb{P}^+$ of some polarity $\mathbb{P}$.
\end{theorem}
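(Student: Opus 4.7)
The plan is to exhibit a canonical polarity built from $\mathbb{L}$ itself and show that its concept lattice is isomorphic to $\mathbb{L}$. Specifically, letting $L$ denote the underlying set of $\mathbb{L}$, I would define $\mathbb{P}_\mathbb{L} := (L, L, \leq)$, taking $A = X = L$ and $I$ to be the lattice order. The first step is to unpack the Galois maps for this choice: for $B \subseteq L$ one gets $B^\uparrow = \{x \in L \mid b \leq x \text{ for all } b \in B\}$, i.e.\ the set of upper bounds of $B$; dually, $Y^\downarrow$ is the set of lower bounds of $Y$. In particular, using completeness of $\mathbb{L}$, one checks that for every $\ell \in L$, $(\ell{\downarrow})^\uparrow = \ell{\uparrow}$ and $(\ell{\uparrow})^\downarrow = \ell{\downarrow}$, so the pair $(\ell{\downarrow}, \ell{\uparrow})$ is Galois-stable, hence a formal concept of $\mathbb{P}_\mathbb{L}$.

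The second step is to define $\phi : \mathbb{L} \to \mathbb{P}_\mathbb{L}^+$ by $\phi(\ell) := (\ell{\downarrow}, \ell{\uparrow})$ and verify that it is an order-isomorphism. Order-preservation and order-reflection are immediate from the very definition of the concept ordering, since $\ell \leq m$ iff $\ell{\downarrow} \subseteq m{\downarrow}$. Injectivity follows because $\ell{\downarrow}$ determines $\ell$ as its maximum element (by completeness, or just by reflexivity).

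The step that needs a little care is surjectivity. Given an arbitrary concept $c = (\val{c}, \descr{c})$, I would set $\ell := \bigvee \val{c}$, which exists by completeness, and argue that $\phi(\ell) = c$. Since each $x \in \descr{c} = \val{c}^\uparrow$ is an upper bound of $\val{c}$, one has $\ell \leq x$, so $\ell$ is itself a lower bound of $\descr{c}$, giving $\ell \in \descr{c}^\downarrow = \val{c}$; thus $\ell$ is the maximum of $\val{c}$, from which $\val{c} = \ell{\downarrow}$ follows by downward closure of $\val{c}$ (which is itself inherited from $\val{c}$ being a set of lower bounds). By the Galois correspondence, $\descr{c} = \val{c}^\uparrow = (\ell{\downarrow})^\uparrow = \ell{\uparrow}$, so $\phi(\ell) = c$.

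The main obstacle is essentially conceptual rather than technical: one must recognise that the lattice order itself furnishes a natural polarity and that the principal downsets and upsets are exactly the Galois-stable sets in this polarity. Once the polarity is chosen, everything reduces to bookkeeping with upper and lower bounds. A slicker alternative, worth mentioning as a remark rather than pursuing, is to replace $A$ and $X$ by any join-dense and meet-dense subsets of $L$ respectively, in line with Proposition~\ref{prop:join and meet generators}; this yields smaller representing polarities but requires no essentially new argument.
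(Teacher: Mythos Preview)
Your proposal is correct and follows essentially the same approach as the paper: both take $\mathbb{P} = (L, L, \leq)$ and exhibit the isomorphism $\ell \mapsto (\ell{\downarrow}, \ell{\uparrow})$. The only cosmetic difference is that the paper phrases surjectivity via $\bigwedge \descr{c}$ rather than your $\bigvee \val{c}$, but these coincide for a formal concept in a complete lattice; your write-up is in fact more detailed than the paper's brief sketch.
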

\begin{proof}
Let $\mathbb{P}: = (L, L, \leq)$, where $L$ is the domain of $\mathbb{L}$ and $\leq$ is  the lattice order. Then it is easy to see that the formal concepts of $\mathbb{P}$ are of the form $((\bigwedge X){\downarrow}, (\bigwedge X){\uparrow})$ for any $X\subseteq L$, and  since $\mathbb{L}$ is complete, the assignments $a\mapsto (a{\downarrow}, a{\uparrow})$ and $((\bigwedge X){\downarrow}, (\bigwedge X){\uparrow})\mapsto \bigwedge X$ define a pair of order isomorphisms between $\mathbb{L}$ and $\mathbb{P}^{+}$. 
\end{proof}

\paragraph{Reflexive graphs and representation of complete lattices.} A {\em reflexive graph} is a structure $\X = (Z, E)$ such that $Z$ is a nonempty set, and $E\subseteq Z\times Z$ is a reflexive relation, i.e.~$\Delta\subseteq E$, where $\Delta: = \{(z, z)\mid z\in Z\}$. From now on, we will assume that all graphs we consider are reflexive even when we drop the adjective.  
Any graph $\X = (Z, E)$  defines the polarity $\mathbb{P_X} = (Z_A,Z_X, I_{E^{c}})$ where $Z_A = Z = Z_X$ and  $I_{E^{c}}\subseteq Z_A\times Z_X$ is defined as $aI_{E^{c}} x$ iff $aE^cx$ iff $(a, x)\notin E$.  

The complete lattice $\X^{+}$ associated with a graph $\X$ is defined as the concept lattice of $\mathbb{P_X}$.
Conversely, for any lattice $\mathbb{L}$, let $\mathsf{Flt}(\mathbb{L})$ and $\mathsf{Idl}(\mathbb{L})$ denote the set of filters and ideals of $\mathbb{L}$, respectively. The graph associated with $\mathbb{L}$ is $\mathbb{X_L}:=(Z,E)$ where $Z$ is the set of tuples
$(F, J)\in \mathsf{Flt}(\mathbb{L})\times \mathsf{Idl}(\mathbb{L})$ such that  $F\cap J = \varnothing$. For $z \in Z$, we denote by $F_z$ the filter part of $z$ and by $J_z$ the ideal part of $z$. Clearly, filter parts and ideal parts of states of $\mathbb{X_L}$ must be proper.  The (reflexive) $E$ relation is defined by 
$zEz'$ if and only if $F_z \cap J_{z'}= \emptyset$. 

\subsection{From algebraic to relational semantics}
\label{ssec: from alg to rel}
In the present subsection, we discuss how the adjunctions outlined in the previous subsection can be used to define an interpretation of the propositional language of general lattices  on polarities and reflexive graphs,  starting from the standard interpretation of this logic on complete lattices. The same method will be applied to define polarity-based and graph-based semantics 
for any LE-language $\mathcal{L}_{\mathrm{LE}} = \mathcal{L}_{\mathrm{LE}} (\mathcal{F}, \mathcal{G})$   from complete $\mathcal{L}_{\mathrm{LE}}$-algebras, via the adjunctions above, suitably expanded to account for the interpretation of the additional connectives, in a uniform and modular way.

This method stems from the observation that interpretations on complex algebras and satisfaction relations on frames  {\em correspond} to one another along the adjunctions outlined above. 


Let L be the propositional language of the logic of general lattices. In what follows we will abuse notation and identify the logic with its language.  Let us briefly recall how the correspondence between interpretations  on complex algebras and satisfaction relations on frames works in the Boolean and distributive settings.

In the Boolean and distributive settings, for any partially ordered set $\mathbb{F} = (W, \leq)$ (in the Boolean case, $\leq$ coincides with the identity $\Delta_W$), and any satisfaction relation $\Vdash \ \subseteq \ W\times \mathrm{L}$ between elements of $\mathbb{F}$ and formulas, an interpretation $\overline{v}: \mathrm{L}\to \mathbb{F}^+$ can be defined\footnote{In the Boolean setting, $\mathbb{F}^+$ is the powerset algebra $\mathcal{P}(W)$; in the distributive setting, $\mathbb{F}^+$ is the  algebra $\mathcal{P}^\uparrow(W)$ of the upward-closed subsets of $\mathbb{F}$.}, which is a lattice homomorphism, and is obtained as the unique homomorphic extension of the equivalent functional representation of the relation $\Vdash$ as a map $v: \mathsf{Prop}\to \mathbb{F}^+$, defined as $v(p) = {\Vdash}^{-1}[p]$\footnote{Notice that in order for this equivalent functional representation to be well defined, we need to assume that the relation $\Vdash$ is $\mathbb{F}^+$-{\em compatible}, i.e.\ that ${\Vdash}^{-1}[p]\in \mathbb{F}^+$ for every $p\in \mathsf{Prop}$. In the Boolean case, every relation from $W$ to $\mathrm{L}$ is clearly $\mathbb{F}^+$-compatible, but already in the distributive case this is not so: indeed ${\Vdash}^{-1}[p]$ needs to be an upward- or downward-closed subset of $\mathbb{F}$. This gives rise to the persistence condition, e.g.\ in the relational semantics of intuitionistic logic.}. In this way, interpretations on complete lattices can be derived from satisfaction relations, so that for every $w\in W$ and every $\mathrm{L}$-formula $\phi$, \begin{equation}\label{eq: desiderata satisfaction} w\Vdash \phi\quad \mbox{ iff }\quad \mathbf{w}\leq \overline{v}(\phi),\end{equation}

\noindent where, on the right-hand side, $\mathbf{w}\in \jty(\mathbb{F}^+)$ is the completely join-irreducible element\footnote{In the Boolean setting, $\mathbf{w}: = \{w\}$; in the distributive setting, $\mathbf{w}: = w{\uparrow} = \{w'\in W\mid w\leq w'\}$.} of $\mathbb{F}^+$ arising from $w\in W$. 
Conversely, for any such $\mathbb{F}$, any lattice homomorphism $\overline{v}: \mathrm{L}\to \mathbb{F}^+$ gives rise to a satisfaction relation $\Vdash \ \subseteq \ W\times \mathrm{L}$ defined as in \eqref{eq: desiderata satisfaction}. Instantiating condition \eqref{eq: desiderata satisfaction} according to the syntactic shape of each formula in $\mathrm{L}$, we obtain the familiar satisfaction conditions of $\mathrm{L}$-formulas in the distributive setting; for instance,  the satisfaction clause of $\vee$-formulas can be obtained as follows:
\begin{center}
\begin{tabular}{rcll}
$w\Vdash \phi\vee\psi$ &   iff & $\mathbf{w}\leq \overline{v}(\phi\vee\psi) $& definition of $\Vdash$ as in \eqref{eq: desiderata satisfaction}\\
&   iff & $\mathbf{w}\leq \overline{v}(\phi)\vee \overline{v}(\psi) $ & $\overline{v}$ is a homomorphism\\
&   iff & $\mathbf{w}\leq \overline{v}(\phi)$ or  $\mathbf{w}\leq\overline{v}(\psi) $ & $\mathbf{w}\in \jty(\mathbb{F}^+)$ and $\mathbb{F}^+$ distributive\\
&   iff & $w\Vdash \phi$ or  $w\Vdash\psi $ & \eqref{eq: desiderata satisfaction}  on $\phi$ and $\psi$ by induction hypothesis. \\
\end{tabular}
\end{center}
To define an interpretation of $\mathrm{L}$  on polarities and reflexive graphs, we are going to apply the homomorphism-to-relation direction of the argument illustrated above. That is, for an arbitrary polarity $\mathbb{P} = (A, X, I)$ 
any homomorphic assignment $\overline{v}: \mathrm{L} \rightarrow \mathbb{P}^+$  will give rise to  {\em pairs} of relations $(\Vdash, \succ)$ such that $\Vdash\ \subseteq\  A\times \mathrm{L}$ and  $\succ\ \subseteq\ X\times \mathrm{L}$, so that for every $a\in A$ and $x\in X$, and every $\mathrm{L}$-formula $\phi$, \begin{equation}\label{eq: desiderata satisfaction-polarity} a\Vdash \phi\quad \mbox{ iff }\quad \mathbf{a}\leq \overline{v}(\phi),\end{equation}
\begin{equation}\label{eq: desiderata co-satisfaction-polarity} x\succ \phi\quad \mbox{ iff }\quad \overline{v}(\phi)\leq \mathbf{x},\end{equation}
where, on the right-hand side of the equivalences above, $\mathbf{a} = (a^{\uparrow\downarrow}, a^{\uparrow})\in \mathbb{P}^+$ and  $\mathbf{x}=(x^{\downarrow},x^{\downarrow\uparrow})\in \mathbb{P}^+$. For instance, spelling out conditions \eqref{eq: desiderata satisfaction-polarity} and \eqref{eq: desiderata co-satisfaction-polarity} for $\vee$-formulas, we obtain the following clauses:
{\footnotesize
\begin{center}
\begin{tabular}{rcll}
$x\succ \phi\vee\psi$ &   iff & $\overline{v}(\phi\vee\psi) \leq \mathbf{x}$& definition of $\succ$ as in \eqref{eq: desiderata co-satisfaction-polarity}\\
&   iff & $\overline{v}(\phi)\vee \overline{v}(\psi) \leq \mathbf{x} $ & $\overline{v}$ homomorphism\\
&   iff & $\overline{v}(\phi)\leq \mathbf{x}$ and  $\overline{v}(\psi)\leq \mathbf{x} $ & definition of $\vee$\\
&   iff & $x\succ \phi$ and  $x\succ\psi $ & \eqref{eq: desiderata co-satisfaction-polarity}  on $\phi$ and $\psi$ by induction hypothesis. \\
\end{tabular}
\end{center}

\begin{center}
\begin{tabular}{rcll}
&&$a\Vdash \phi\vee\psi$\\ 
&   iff & $\mathbf{a}\leq \overline{v}(\phi\vee\psi) $& definition of $\Vdash$ as in \eqref{eq: desiderata satisfaction-polarity}\\
&   iff & $\mathbf{a}\leq \overline{v}(\phi)\vee \overline{v}(\psi) $ & $\overline{v}$ homomorphism\\
&   iff & $\val{\mathbf{a}}\subseteq \val{\overline{v}(\phi)\vee \overline{v}(\psi)}$ & definition of  order  in $\mathbb{P}^+$\\
&   iff & $a^{\uparrow\downarrow}\subseteq (\descr{\overline{v}(\phi)}\cap\descr{\overline{v}(\psi)})^{\downarrow} $ & definition of $\mathbf{a}$ and $\vee$ in $\mathbb{P}^+$\\
&   iff & $a\in (\descr{\overline{v}(\phi)}\cap\descr{\overline{v}(\psi)})^{\downarrow} $ & definition of Galois-closure\\
&   iff & for all $x\in X$, if  $x\in \descr{\overline{v}(\phi)}\cap\descr{\overline{v}(\psi)}$ then $aIx$& definition of $(\cdot)^{\downarrow}$\\
&   iff & for all $x\in X$, if  $x\in \descr{\overline{v}(\phi)}$ and $x\in \descr{\overline{v}(\psi)}$ then $aIx$& definition of $\cap$\\
&   iff & for all $x\in X$, if  $x^{\downarrow\uparrow} \subseteq \descr{\overline{v}(\phi)}$ and $x^{\downarrow\uparrow} \subseteq  \descr{\overline{v}(\psi)}$ then $aIx$& definition of Galois-closure\\
&   iff & for all $x\in X$, if  $\overline{v}(\phi)\leq \mathbf{x}$ and $\overline{v}(\psi)\leq \mathbf{x}$ then $aIx$& definition of  order  in $\mathbb{P}^+$\\

&   iff & for all $x\in X$, if  $x\succ\phi$ and $x\succ\psi$ then $aIx$.& \eqref{eq: desiderata co-satisfaction-polarity}  on $\phi$ and $\psi$ by ind.~hyp.\\
\end{tabular}
\end{center}
}

Notice that, unlike the argument in the distributive setting, we did not need to appeal to join- or (meet-)irreducibility.
Reasoning in an analogous way for the remaining connectives, we obtain the following recursive definition of $\Vdash$ and $\succ$ on polarities for all $\mathrm{L}$-formulas:
{\footnotesize
\begin{flushleft}
	\begin{tabular}{lllllll}
		 $a \Vdash \bot$ && $aIx$ for all $x\in X$  & & $x \succ \bot$ && always\\
		$ a \Vdash \top$ &&always & &$x \succ \top$ &&$aIx$ for all $a\in A$\\
		$a \Vdash p$ & iff & $a\in \val{\overline{v}(p)}$ & &$x \succ p$ & iff & $x\in \descr{\overline{v}(p)}$\\
		$a \Vdash \phi\wedge \psi$ & iff & $a \Vdash \phi$ and $a \Vdash  \psi$ \\
		$ x \succ \phi\wedge \psi$ & iff &  for all $a\in A$, if $a \Vdash \phi$ and $a \Vdash \psi $ then $a I x$\\
		$ a \Vdash \phi\vee \psi$ & iff & for all $x\in X$, if $x \succ \phi$ and 
		$x \succ \psi $ then $a I x$\\  
		$ x \succ \phi\vee \psi$ & iff &  $ x \succ \phi$ and $x \succ  \psi$. \\
		\end{tabular}
\end{flushleft}
}




Likewise, for an arbitrary  reflexive graph $\X = (Z, E)$,
any homomorphic assignment  $\overline{v}: \mathrm{L} \rightarrow \mathbb{X}^+$ will give rise to  {\em pairs} of relations $(\Vdash, \succ)$ such that $\Vdash\ \subseteq\  Z\times \mathrm{L}$ and  $\succ\ \subseteq\ Z\times \mathrm{L}$, so that for every  $z\in Z$ and every $\mathrm{L}$-formula $\phi$, 
\begin{equation}\label{eq: desiderata satisfaction-graph} z\Vdash \phi\quad \mbox{ iff }\quad \mathbf{z}_s\leq \overline{v}(\phi),\end{equation}
\begin{equation}\label{eq: desiderata co-satisfaction-graph} z\succ \phi\quad \mbox{ iff }\quad \overline{v}(\phi)\leq \mathbf{z}_r,\end{equation}
where, on the right-hand side of the equivalences above, $\mathbf{z}_s = (z^{\uparrow\downarrow}, z^{\uparrow})\in \mathbb{X}^+$ and  $\mathbf{z}_r=(z^{\downarrow},z^{\downarrow\uparrow})\in \mathbb{X}^+$. For instance, spelling out conditions \eqref{eq: desiderata satisfaction-graph} and \eqref{eq: desiderata co-satisfaction-graph} for $\vee$-formulas, we obtain the following clauses:
{\footnotesize
\begin{center}
\begin{tabular}{rcll}
$z\succ \phi\vee\psi$ &   iff & $\overline{v}(\phi\vee\psi) \leq \mathbf{z}_r$& definition of $\succ$ as in \eqref{eq: desiderata co-satisfaction-graph}\\
&   iff & $\overline{v}(\phi)\vee \overline{v}(\psi) \leq \mathbf{z}_r $ & $\overline{v}$ homomorphism\\
&   iff & $\overline{v}(\phi)\leq \mathbf{z}_r$ and  $\overline{v}(\psi)\leq \mathbf{z}_r $ & definition of $\vee$\\
&   iff & $z\succ \phi$ and  $z\succ\psi $. & \eqref{eq: desiderata co-satisfaction-graph}  on $\phi$ and $\psi$ by induction hypothesis. \\
\end{tabular}
\end{center}
}

{\footnotesize
\begin{center}
\begin{tabular}{rcll}
&&$z\Vdash \phi\vee\psi$ \\
&   iff & $\mathbf{z}_s\leq \overline{v}(\phi\vee\psi) $& definition of $\Vdash$ as in \eqref{eq: desiderata satisfaction-graph}\\
&   iff & $\mathbf{z}_s\leq \overline{v}(\phi)\vee \overline{v}(\psi) $ & $\overline{v}$ homomorphism\\
&   iff & $\val{\mathbf{z}_s}\subseteq \val{\overline{v}(\phi)\vee \overline{v}(\psi)}$ & definition of  order  in $\mathbb{X}^+$\\
&   iff & $z^{\uparrow\downarrow}\subseteq (\descr{\overline{v}(\phi)}\cap\descr{\overline{v}(\psi)})^{\downarrow} $ & definition of $\mathbf{z}_s$ and $\vee$ in $\mathbb{X}^+$\\
&   iff & $z\in (\descr{\overline{v}(\phi)}\cap\descr{\overline{v}(\psi)})^{\downarrow} $ & definition of Galois-closure\\
&   iff & for all $z'\in Z$, if  $z'\in \descr{\overline{v}(\phi)}\cap\descr{\overline{v}(\psi)}$ then $zE^cz'$& definition of $(\cdot)^{\downarrow}$\\
&   iff & for all $z'\in Z$, if  $z'\in \descr{\overline{v}(\phi)}$ and $z'\in \descr{\overline{v}(\psi)}$ then $zE^cz'$& definition of $\cap$\\
&   iff & for all $z'\in Z$, if  $z^{\downarrow\uparrow} \subseteq \descr{\overline{v}(\phi)}$ and $x^{\downarrow\uparrow} \subseteq  \descr{\overline{v}(\psi)}$ then $zE^cz'$& definition of Galois-closure\\
&   iff & for all $z'\in Z$, if  $\overline{v}(\phi)\leq \mathbf{x}$ and $\overline{v}(\psi)\leq \mathbf{z'}_r$ then $zE^cz'$& definition of  order  in $\mathbb{P}^+$\\

&   iff & for all $z'\in Z$, if  $z'\succ\phi$ and $z'\succ\psi$ then $zE^cz'$& \eqref{eq: desiderata co-satisfaction-graph}  on $\phi$ and $\psi$ by ind.~hyp.\\
&   iff & for all $z'\in Z$, if $zEz'$ then $z'\not\succ\phi$ or $z'\not\succ\psi$.  & contraposition.\\
\end{tabular}
\end{center}
}

Reasoning in an analogous way, we obtain the following recursive definition of $\Vdash$ and $\succ$ on graphs for all $\mathrm{L}$-formulas:
{\footnotesize
\begin{flushleft}
	\begin{tabular}{lllllll}
		$z \succ \bot$ && always && $z \Vdash \bot$ && never\\
		$z \Vdash \top$ &&always & &$z \succ \top$ &&never\\
		$z \Vdash p$ & iff & $\z\in V(p)$ & &$z \succ p$ & iff & for all $z'$, if $z'Ez$ then \\
		$z \succ \phi \vee \psi$ &iff &$z\succ \phi \text{ and } z \succ \psi$ & & & & $z' \not\Vdash p$ \\
		$z \Vdash \phi \vee \psi$ &iff &for all $z'$, if $zEz'$ then $z'\not\succ\phi$ or $z'\not\succ\psi$\\
		$z \Vdash \phi \wedge \psi$ &iff &$z\Vdash \phi \text{ and } z \Vdash \psi$ \\
		$z \succ \phi \wedge\psi$ &iff & for all $z'$, if $z'Ez$ then $z'\not\Vdash \phi$ or $z'\not\Vdash \psi$\\
\end{tabular}
\end{flushleft}
}		
When the assignment $\overline{v}$ is clear from the context we will sometimes write $\val{\phi}$ and $\descr{\phi}$ for $\val{\overline{v}(\phi)}$ and $\descr{\overline{v}(\phi)}$, respectively.

\subsection{Relational interpretation of additional connectives}
\label{ssec:rel interpret modalities}
In the present subsection, we apply the method discussed in the previous subsection to define polarity-based and graph-based semantics 
for any LE-language $\mathcal{L}_{\mathrm{LE}} = \mathcal{L}_{\mathrm{LE}} (\mathcal{F}, \mathcal{G})$. Starting    from complete $\mathcal{L}_{\mathrm{LE}}$-algebras, we will translate homomorphic assignments of $\mathcal{L}_{\mathrm{LE}}$-formulas into relations $\Vdash$ and $\succ$ via suitable expansions of the adjunctions between complete lattices and polarities and (complete) lattices and reflexive graphs.


Let us first recall  how  the usual satisfaction relation clauses can be retrieved from the algebraic interpretation in the Boolean and distributive case for a unary diamond $\Diamond$. Let ML be the propositional language of the logic of general lattices expanded with a unary and positive $f$-type connective $\Diamond$. In what follows, we will abuse notation and identify the minimal normal ML-logic with its language.
 Let $\mathbb{W} = (W, \leq)$ be a partially ordered set (in the Boolean case, $\leq$ coincides with the identity $\Delta_W$), and let us expand $\mathbb{W}^+$ with a completely join-preserving unary operation $\Diamond^{\mathbb{W}^+}$ so as to obtain a (Boolean or distributive) modal algebra $\mathbb{A} = (\mathbb{W}^+, \Diamond^{\mathbb{W}^+})$.
Let $\overline{v}: \mathrm{ML}\to \mathbb{A}$ be a  homomorphic assignment, hence $\overline{v}(\Diamond \phi) = \Diamond^{\mathbb{W}^+}\overline{v}(\phi)$. 

As done in the previous subsection, the recursive definition of the  relation $\Vdash\ \subseteq W\times \mathrm{ML}$ corresponding to the assignment  $\overline{v}$  is obtained by spelling out equation (\ref{eq: desiderata satisfaction}). 

For the case of $\Diamond$-formulas, 
since $\mathbb{W}^+$ is a perfect\footnote{A complete lattice $\bba$ is {\em perfect} if it is both completely join-generated by the set $\jira$ of its completely join-irreducible elements, and completely meet-generated by the set $\mira$ of its completely meet-irreducible elements.} distributive lattice and $\overline{v}(\Diamond\psi)\in \mathbb{W}^+$, we get $\overline{v}(\psi) = \bigvee\{\mathbf{w'}\in \jty(\mathbb{W}^+)\mid \mathbf{w'}\leq \overline{v}(\psi)\} = \bigvee\{\mathbf{w'}\in \jty(\mathbb{W}^+)\mid w'\Vdash \psi\}$. Since by assumption $\overline{v}$ is a homomorphism, $\overline{v}(\Diamond \psi) = \Diamond^{\mathbb{W}^+}\overline{v}(\psi) = \Diamond^{\mathbb{W}^+}(\bigvee\{\mathbf{w'}\in \jty(\mathbb{W}^+)\mid w'\Vdash \psi\})$, and since $\Diamond^{\mathbb{W}^+}$ is completely join-preserving, we get:

$$\overline{v}(\Diamond \psi) = \bigvee\{\Diamond^{\mathbb{W}^+}\mathbf{w'}\mid w'\Vdash \psi\}.$$

\noindent Hence, for any $w\in W$,
{\footnotesize
\begin{center}
\begin{tabular}{r c l l}
$w\Vdash \Diamond\psi$& iff &$\mathbf{w}\leq \overline{v}(\Diamond\psi)$\\
& iff & $\mathbf{w}\leq \bigvee\{\Diamond^{\mathbb{W}^+}\mathbf{w'}\mid  w'\Vdash \psi\}$ &\\
& iff & $\mathbf{w}\leq \Diamond^{\mathbb{W}^+}\mathbf{w'}$ for some $w'\in W$ s.t.~$w'\Vdash \psi$ & ($\mathbf{w}$ completely join-prime)\\
& iff & $w R_\Diamond w'$ for some $w'\in W$ s.t.~$w'\Vdash \psi$.& \\
\end{tabular}
\end{center}
}

\noindent So we have done two things at the same time: Firstly, we have {\em defined} the accessibility relation $R_\Diamond\subseteq W\times W$ corresponding to the interpretation of $\Diamond$ as $\Diamond^{\mathbb{W}^+}$ as follows:
\[w R_\Diamond w' \quad\mbox{ iff }\quad \mathbf{w}\leq \Diamond^{\mathbb{W}^+}\mathbf{w'}. \]
Secondly, we have derived the corresponding defining clause for  $\Diamond$-formulas. 
The same can be done in the general lattice case, starting e.g.~from a polarity $\mathbb{P}= (A, X, I)$ and expanding $\mathbb{P}^+$ with a completely join preserving unary operation $\Diamond^{\mathbb{P}^+}$ so as to obtain a  lattice-based complete modal algebra $\mathbb{A} = (\mathbb{P}^+, \Diamond^{\mathbb{P}^+})$. 
Analogously to the way we argued above and appealing to Proposition \ref{prop:join and meet generators}, we can write $\overline{v}(\psi) = \bigvee\{\mathbf{a}\in \mathbb{P}^+\mid \mathbf{a}\leq \overline{v}(\psi)\} = \bigvee\{\mathbf{a}\in \mathbb{P}^+\mid a\Vdash \psi\}$. Since by assumption $\overline{v}$ is a homomorphism, $\overline{v}(\Diamond \psi) = \Diamond^{\mathbb{P}^+}\overline{v}(\psi) = \Diamond^{\mathbb{P}^+}(\bigvee\{\mathbf{a}\in \mathbb{P}^+\mid a\Vdash \psi\})$, and since $\Diamond^{\mathbb{P}^+}$ is completely join-preserving, we get:

\begin{equation}
\label{eq: diamondpsi as a bigvee}
\overline{v}(\Diamond \psi) = \bigvee\{\Diamond^{\mathbb{P}^+}\mathbf{a}\mid a\Vdash \psi\}.\end{equation}

However, the chain of equivalences above breaks down in the third step, since the elements $\mathbf{a}$ are not  in general completely join-{\em prime} anymore, but only complete join-generators (cf.~Proposition \ref{prop:join and meet generators}).  However, we can obtain a reduction also in this case, by crucially making use of the completely meet-generating elements $\mathbf{x}$: 

\begin{center}
\begin{tabular}{r c l l}
$x\succ \Diamond\psi$& iff &$\overline{v}(\Diamond\psi)\leq \mathbf{x}$ & definition of $\succ$ as in \eqref{eq: desiderata co-satisfaction-polarity}\\
& iff & $\bigvee\{\Diamond^{\mathbb{P}^+}\mathbf{a}\mid a\Vdash \psi\}\leq \mathbf{x}$ & \eqref{eq: diamondpsi as a bigvee}\\
& iff & for all $a\in A$, if $a\Vdash \psi$ then $\Diamond^{\mathbb{P}^+}\mathbf{a}\leq \mathbf{x}$ & definition of $\bigvee$\\
& iff & for all $a\in A$, if $a\Vdash \psi$ then $x R_{\Diamond} a$, & \eqref{eq:def rdiamond} 
\end{tabular}
\end{center}
where we have {\em defined} the accessibility relation $R_\Diamond\subseteq X\times A$ corresponding to the interpretation of $\Diamond$ as $\Diamond^{\mathbb{P}^+}$ as follows:
\begin{equation}\label{eq:def rdiamond} x R_\Diamond a \quad\mbox{ iff }\quad \Diamond^{\mathbb{P}^+}\mathbf{a}\leq \mathbf{x}. \end{equation}
Now, using the fact that the set of elements $\mathbf{x}$ for $x\in X$ are meet-generators of $\mathbb{P}^+$, we can write:
\begin{center}
\begin{tabular}{r c l l}
&&$a\Vdash \Diamond\psi$\\
&iff & $\mathbf{a}\leq \overline{v}(\Diamond\psi)$ & definition of $\Vdash$ as in \eqref{eq: desiderata satisfaction-polarity}\\
& iff & $\mathbf{a}\leq \bigwedge\{\mathbf{x}\mid  \overline{v}(\Diamond\psi)\leq \mathbf{x}\}$ & $\mathbf{x}$ for $x\in X$ meet-generators of $\mathbb{P}^+$\\
& iff & for all $x\in X$, if $\overline{v}(\Diamond\psi)\leq \mathbf{x}$ then $ \mathbf{a}\leq \mathbf{x}$& definition of $\bigwedge$\\
& iff & for all $x\in X$, if $x\succ \Diamond\psi$ then $ a Ix$ &definition of $\succ$ as in \eqref{eq: desiderata co-satisfaction-polarity}.\\
\end{tabular}
\end{center}
Similar arguments yield  the recursive definition of $\Vdash$ and $\succ$ on polarity-based relational structures for formulas in any  $\mathcal{L}_{\mathrm{LE}}$-signature, for instance:
\begin{center}
	\begin{tabular}{lll}
	$ a \Vdash \Box\phi$ & iff & for all $x\in X$, if $x \succ \phi$ then $a R_{\Box} x$ \\
		$x \succ \Box\phi$ & iff &  for all $a\in A$, if $a \Vdash \Box\phi $ then $ a I x$\\
		$a \Vdash \lhd\phi$ & iff & for all $x\in X$, if $x \succ \lhd\phi $ then $ a I x$ \\
		$ x \succ \lhd\phi$ & iff &  for all $x'\in X$, if $x' \succ \phi $ then $ x R_{\lhd} x'$ \\
		$ a \Vdash \rhd\phi$ & iff & for all $a'\in A$, if $a' \Vdash \phi $ then $ a R_{\rhd} a'$\\
		$x \succ \rhd\phi$ & iff & for all $a\in A$, if $a \Vdash \rhd\phi $ then $ a I x$\\
\end{tabular}
\end{center}
%
\noindent
where the relations $R_{\Box}\subseteq A\times X$, $R_{\lhd}\subseteq X\times X$ and $R_{\rhd}\subseteq A\times A$ are defined as follows:
\begin{equation}\label{eq:def rboxand rtriangles} 
aR_{\Box}  x \; \mbox{ iff }\; \mathbf{a}\leq \Box^{\mathbb{P}^+}\mathbf{x}\quad\quad x R_{\lhd} x' \; \mbox{ iff }\;  {\lhd}^{\mathbb{P}^+}\mathbf{x'}\leq \mathbf{x}\quad\quad a R_{\rhd} a' \; \mbox{ iff }\; \mathbf{a}\leq {\rhd}^{\mathbb{P}^+}\mathbf{a'}.
\end{equation}
More generally, for any connective $f\in \mathcal{F}$ of arity $n_f$ and any connective $g\in \mathcal{G}$ of arity $n_g$, any interpretation of $f$ and $g$ on 
$\mathbb{P}^+$  
will yield  relations $R_f\subseteq X\times \overline{A}^{(n_f)}$  
and $R_g\subseteq A\times \overline{X}^{(n_g)}$, where $\overline{A}^{(n_f)}$ denotes the $n_f$-fold cartesian product of $A$ and $X$ such that for each $1\leq i\leq n_f$ if $f$ is monotone in its $i$th coordinate then the $i$th coordinate of $\overline{A}^{(n_f)}$ is $A$, whereas if $f$ is antitone in its $i$th coordinate then the $i$th coordinate of $\overline{A}^{(n_f)}$ is $X$, and  $\overline{X}^{(n_g)}$ is defined in a similar way w.r.t.~$g$. The relations $R_f$ and $R_g$ are defined as follows:
\begin{equation}\label{eq:def rf and rg} R_f (x, \overline{a}^{(n_f)}) \; \mbox{ iff }\;  f^{\mathbb{P}^+}(\overline{\mathbf{a}}^{(n_f)})\leq \mathbf{x} \quad\quad R_g (a, \overline{x}^{(n_g)}) \; \mbox{ iff }\;   \mathbf{a}\leq g^{\mathbb{P}^+}(\overline{\mathbf{x}}^{(n_g)}). \end{equation}
The corresponding clauses for $\Vdash$ and $\succ$ are then the following ones: 
\begin{flushleft}
	\begin{tabular}{lllllll}
	$ a \Vdash g(\overline{\phi})$ & iff & for all $\overline{x}^{(n_g)}\in \overline{X}^{(n_g)}$, if $\overline{x}^{(n_g)} \succ^{(n_g)}\overline{ \phi}$ then $R_{g} (a, \overline{x}^{(n_g)})$ \\
		$x \succ g(\overline{\phi})$ & iff &  for all $a\in A$, if $a \Vdash g(\overline{\phi})$ then $ a I x$\\
		$ x \succ f(\overline{\phi})$ & iff & for all $\overline{a}^{(n_f)}\in \overline{A}^{(n_f)}$, if $\overline{a}^{(n_f)} \Vdash^{(n_f)}\overline{ \phi}$ then $R_{f} (x, \overline{a}^{(n_f)})$ \\
		$a \Vdash f(\overline{\phi})$ & iff &  for all $x\in X$, if $x\succ g(\overline{\phi})$ then $ a I x$.\\
\end{tabular}
\end{flushleft}

where, if  $\overline{a}^{(n_f)}\in \overline{A}^{(n_f)}$, the notation $\overline{a}^{(n_f)} \Vdash^{(n_f)}\overline{ \phi}$  refers to the conjunction over $1\leq i\leq n_f$ of statements of the form $a_i\Vdash\phi_i$, if $f$ is positive in its $i$th coordinate, or $x_i\succ \phi_i$, if $f$ is negative in its $i$th coordinate, whereas if  $\overline{x}^{(n_g)}\in \overline{X}^{(n_g)}$, the notation $\overline{x}^{(n_g)} \succ^{(n_g)}\overline{ \phi}$  refers to the conjunction over $1\leq i\leq n_g$ of statements of the form  $x_i\succ \phi_i$, if $g$ is positive in its $i$th coordinate, or $a_i\Vdash\phi_i$, if $g$ is negative in its $i$th coordinate.

As an example, we instantiate the above general clauses for a binary implication-like connective $\to$ in $\mathcal{G}$ which is antitone in the first coordinate and monotone in the second. The relation corresponding to this connective is then $R_{\to} \subseteq A \times A \times X$ such that $R_{\to} (a, b, x)$  iff $\mathbf{a}\leq \mathbf{b} \to^{\mathbb{P}^+} \mathbf{x}$, and the clauses for $\Vdash$ and $\succ$ become: 
\begin{flushleft}
	\begin{tabular}{lllllll}
		$ a \Vdash \phi \to \psi$ & iff & for all $b \in A$ and $x \in X$, if $b \Vdash \phi$ and $x \succ \psi$, then $R_{\to} (a, b, x)$ and\\
		$x \succ \phi \to \psi$ & iff &  for all $a\in A$, if $a \Vdash \phi \to \psi$ then $a I x$.\\
	\end{tabular}
\end{flushleft}

Likewise, starting from a reflexive graph $\mathbb{X} = (Z,E)$ and a homomorphic assignment $\overline{v}:\mathcal{L}_{\mathrm{LE}} \to \mathbb{X}^+$ we will extend the relation $\Vdash$ and ${\succ}$ to the whole of $\mathcal{L}_{\mathrm{LE}}$ via suitable relations associated with each logical connective. We do not expand further on the derivations (some of which are more extensively reported in Section \ref{appendix:rel:int:addnl:cnctvs}), but limit ourselves to reporting the definition for the unary modalities and the general $f\in \mathcal{F}$ and  $g\in \mathcal{G}$. 

{\footnotesize
\begin{flushleft}
	\begin{tabular}{lllllll}
		$z \succ \Diamond\phi$ &iff & for all $z'$, if  $z R_{\Diamond}z'$  then $z' \not \Vdash \phi$&&
		$z \Vdash \Diamond\phi$ &iff &for all $z'$, if  $z Ez' $  then $z' \not\succ \Diamond \phi$\\
		$z \Vdash \Box \psi$ &iff &for all $z'$, if  $z R_{\Box}z'  $  then $z' \not\succ \psi$&&
		$z \succ \Box\psi$ &iff &for all $z'$, if  $z' Ez  $  then $z' \not\Vdash \Box \psi$\\
		$z \succ \lhd\psi$ & iff &  for all $z'$, if  $z R_{\lhd}\z'  $  then $z' \not\succ \psi$ &&
		$z \Vdash \lhd\psi$ & iff & for all $z'$, if  $z Ez' $  then $z' \not\succ \psi$ \\ 
		$z \Vdash \rhd\psi$ & iff & for all $z'$, if  $z R_{\rhd} z' $  then $z' \not\Vdash \psi$&&
		$z \succ \rhd\psi$ & iff & for all $z'$, if  $z'Ez $  then $z' \not\Vdash \rhd\psi$\\
	\end{tabular}
\end{flushleft}
}
where the relations $R_{\Box}$, $R_{\Diamond}$, $R_{\lhd}$ and $R_{\rhd}$ are defined as follows:

\begin{equation}\label{eq:def:rels:xtra:cnctvs:on:grphs} 
\begin{tabular}{ll}
$z R_{\Diamond} z' \; \mbox{ iff }\;  \Diamond^{\mathbb{X}^+} \mathbf{z'}_s \not\leq \mathbf{z}_r \quad\quad$ &$zR_{\Box}  z' \; \mbox{ iff }\; \mathbf{z}_s \not\leq \Box^{\mathbb{X}^+} \mathbf{z}'_r$ \\
$z R_{\lhd} z' \; \mbox{ iff }\;  {\lhd}^{\mathbb{X}^+}\mathbf{z'}_r \not\leq \mathbf{z}_r \quad\quad$ &$z R_{\rhd} z' \; \mbox{ iff }\; \mathbf{z}_s \not\leq {\rhd}^{\mathbb{X}^+}\mathbf{z'}_s$.
\end{tabular}
\end{equation}

As in the case of polarities, we can also generalize these definitions to arbitrary connectives $f\in \mathcal{F}$ and $g\in \mathcal{G}$ with arity $n_f$ and $n_g$, respectively. Any interpretation of $f$ and $g$ on $\mathbb{X}^+$ will yield relations $R_f \subseteq Z^{n_f + 1}$ and $R_g \subseteq Z^{n_g + 1}$ defined as follows: 
\begin{equation}\label{eq:def rf and rg:graph} R_f (z, \overline{z}) \; \mbox{ iff }\;  f^{\mathbb{X}^+}(\overline{\mathbf{z}}_s^{(n_f)}) \not\leq \mathbf{z}_r \quad\quad R_g (z, \overline{z}^{(n_g)}) \; \mbox{ iff }\;   \mathbf{z}_s \not\leq g^{\mathbb{X}^+}(\overline{\mathbf{z}}_r^{(n_g)}), \end{equation}
where the $i$-th component of $\overline{\mathbf{z}}_s^{(n_f)}$ is $\mathbf{z}_s$ ($\mathbf{z}_r$) if $f$ is monotone (antitone) in its $i$-th argument, and the $i$-th component of $\overline{\mathbf{z}}_r^{(n_g)}$ is $\mathbf{z}_r$ ($\mathbf{z}_s$) if $g$ is monotone (antitone) in its $i$-th argument. 

Given an $n_g$-tuple of formulas $\overline{\phi} = (\phi_1, \ldots, \phi_{n_g})$ and $\overline{z} = (z_1, \ldots, z_{n_g}) \in Z^{n_g}$, we write $\overline{z} \; \succ^{(n_g)} \; \overline{\phi}$ to indicate that  $z_i \succ \phi_i$ for all $1 \leq i \leq n_g$ for which $g$ is monotone in the $i$-th coordinate and that  $z_j \Vdash \phi_j$ for all $1 \leq j \leq n_g$ for which $g$ is antitone in the $j$-th coordinate. Similarly, given an $n_f$-tuple of formulas $\overline{\phi} = (\phi_1, \ldots, \phi_{n_f})$ and $\overline{z} = (z_1, \ldots, z_{n_f}) \in Z^{n_f}$, we write $\overline{z} \; \Vdash^{(n_f)} \; \overline{\phi}$ to indicate that  $z_i \Vdash \phi_i$ for all $1 \leq i \leq n_f$ for which $f$ is monotone in the $i$-th coordinate and that  $z_j \succ \phi_j$ for all $1 \leq j \leq n_f$ for which $f$ is antitone in the $j$-th coordinate. Using this notation, the corresponding clauses for $\Vdash$ and $\succ$ are:

\begin{flushleft}
	\begin{tabular}{lllllll}
		$z \Vdash g(\overline{\phi})$ & iff &for all $\overline{z} \in Z^{n_g}$, if $R_{g} (z, \overline{z})$ then it is not the case that $\overline{z} \; \succ^{(n_g)} \; \overline{\phi}$\\
		$z \succ g(\overline{\phi})$ & iff & for all $z'\in Z$, if $z' E z$ then $z'\not \Vdash g(\overline{\phi})$\\
		$z \succ f(\overline{\phi})$ & iff & for all $\overline{z} \in Z^{n_f}$, if $R_{f} (z, \overline{z})$ then it is not the case that $\overline{z} \; \Vdash^{(n_f)} \; \overline{\phi}$\\
		$z \Vdash f(\overline{\phi})$ & iff & for all $z'\in Z$, if $z E z'$ then $z'\not \succ f(\overline{\phi})$.\\
	\end{tabular}
\end{flushleft}

\subsection{Projecting onto the classical setting}
\label{ssec: projection}

We finish this section by  specifying how the polarity-based and graph-based semantics  discussed in the previous subsections project onto the Kripke semantics of classical normal modal logic. For ease of presentation we address this issue in the signature of $\mathcal{L}_{\mathrm{PML}}$. 

Algebraically, the polarity-based and graph-based structures $\mathbb{F}$ that can be recognized as ``classical'' are exactly those the complex algebra  $\mathbb{F}^+$ of which is (isomorphic to) a powerset algebra (possibly endowed with extra operations).
This is  the case of structures based on graphs $\mathbb{X} = (Z, E)$ 
such that $E = \Delta$, or based on polarities $\mathbb{P}= (A, X, I)$ such that $A = X = Z$ for some set $Z$, and $aIx$ iff $a\neq x$ for all $a, x\in Z$. In these cases, the
powerset algebra $\mathcal{P}(Z)$ can be represented as a concept lattice
each element of which is of the form $(Y, Y^c)$ for any $Y\subseteq Z$. Therefore, for any formula $\phi$ interpreted in these structures, $\descr{\phi} = \{z\in Z\mid z\succ \phi\} = \val{\phi}^c$, and hence $z\succ \phi$ iff $z\not\Vdash \phi$. This of course provides a more direct way to reduce $\succ$ to $\Vdash$ than the one defined in terms of the operations $(\cdot)^{\uparrow}$ and $(\cdot)^{\downarrow}$, which allows to formulate the well known recursive definition of satisfaction in the classical setting purely in terms of $\Vdash$.
In particular, the satisfaction clause for $\vee$-formulas on the ``classical'' polarity-based models described above can be rewritten as follows: for any $a\in Z$ and all formulas $\phi$ and $\psi$,
\begin{center}
	\begin{tabular}{rcll}
		&&$a\Vdash \phi\vee\psi$\\ 
		&   iff & for all $x\in X$, if  $x\succ\phi$ and $x\succ\psi$ then $aIx$& \\
		&   iff & for all $x\in X$, if  $x\not\Vdash\phi$ and $x\not\Vdash\psi$ then $a\neq x$& \\
		&   iff & for all $x\in X$, if $a=x$ then $x\Vdash\phi$ or $x\Vdash\psi$ & \\
		&   iff & $a\Vdash\phi$ or $a\Vdash\psi$. & \\
	\end{tabular}
\end{center}
Likewise, the satisfaction clause for $\bot$ can be rewritten as
\begin{center}
	\begin{tabular}{rcll}
		$a\Vdash \bot$ & iff &$a\not \succ \bot$    & which is never the case.\\
	\end{tabular}
\end{center}
As for the interpretation of $\Diamond$-formulas,  if $J_{R_{\Diamond}^c}\subseteq X\times A$ is the corresponding relation on the given  ``classical'' polarity-based models as described above, applying the following clause
\begin{center}
	\begin{tabular}{r c l l}
		$x\succ \Diamond\psi$& iff & for all $a\in A$, if $a\Vdash \psi$ then $x J_ {R_{\Diamond}^c} a$ & \\ 
	\end{tabular}
\end{center}
yields 
\begin{center}
	\begin{tabular}{r c l l}
		$x\Vdash \Diamond\psi$& iff & $x\not\succ \Diamond\psi$\\
		& iff & for some $a\in A$,  $a\Vdash \psi$ and $(x, a)\notin  J_ {R_{\Diamond}^c}$ & \\ 
		& iff & for some $a\in A$,  $a\Vdash \psi$ and $x R_{\Diamond} a$. & \\ 
	\end{tabular}
\end{center}
where $x J_ {R_{\Diamond}^c} a$ iff $(x, a)\notin R_{\Diamond} $ for every $x, a \in Z$.  
As for the interpretation of $\Box$-formulas, stipulating, likewise, that $a I_ {R_{\Box}^c} x$ iff $(a, x)\notin R_{\Box} $ for every $x, a \in Z$, the satisfaction clause for $\Box$-formulas can be rewritten as follows:
\begin{center}
	\begin{tabular}{r c l l}
		$ a \Vdash \Box\phi$ & iff & for all $x\in X$, if $x \succ \phi$ then $a I_{R_{\Box}^c} x$ 
		& \\
		& iff & for all $x\in X$, if $x \not\Vdash \phi$ then $(a, x)\notin  R_{\Box}$ 
		& \\
		& iff & for all $x\in X$, if $a R_{\Box} x$  then $x \Vdash \phi$.
		& \\
	\end{tabular}
\end{center}

Similar computations show that also in the setting of the graph-based semantics, the satisfaction and refutation clauses  project to the well known ones.  For instance, when $zE z'$ iff $z = z'$ and $z\succ \phi$ iff $z\not \Vdash\phi$, 
\begin{center}
	\begin{tabular}{rcll}
		$z\Vdash \phi\vee\psi$ 
		&   iff & for all $z'\in Z$, if $zEz'$ then $z'\not\succ\phi$ or $z'\not\succ\psi$  & \\
		&   iff & for all $z'\in Z$, if $z =z'$ then $z'\Vdash\phi$ or $z'\Vdash\psi$  & \\
		&   iff & $z\Vdash\phi$ or $z\Vdash\psi$. & \\
	\end{tabular}
\end{center}
Analogously, the following clause 
\begin{center}
	\begin{tabular}{rcll}
		$z \succ \Diamond\phi$ &iff & for all $z'$, if  $z R_{\Diamond}z'$  then $z' \not \Vdash \phi$\\
	\end{tabular}
\end{center}
yields
\begin{center}
	\begin{tabular}{rcll}
		$z \Vdash \Diamond\phi$ &iff & $z \not \succ \Diamond\phi$\\
		&iff & for some $z'$,   $z R_{\Diamond}z'$  and $z' \Vdash \phi.$\\
	\end{tabular}
\end{center}
The remaining computations are omitted.

In conclusion,  in the present section we have discussed a uniform methodology for defining relational semantics for normal LE-logics in any signature on the basis of their standard interpretation on  complete LE-algebras of compatible signature. We have concretely illustrated how this methodology works in the case of two different semantic environments, and we have discussed how these environments project onto the Kripke semantics of classical normal modal logic. However, in order for these environments to `make sense' in a more fundamental way, we need to couple them with  extra-mathematical interpretations which simultaneously account for the meaning of {\em all} connectives, and which coherently extend to the meaning of logical axioms and of their first-order correspondents.  In the next section, we discuss two possible  interpretations, and the views on LE-logics elicited by each of them.

\section{From semantics to meaning}\label{sec:Interpretation}
Any extra-mathematical interpretation of LE-logics must account for the failure of distributivity. 
Although, as discussed in the previous section, polarity-based and graph-based semantic structures arise from the standard interpretation in LE-algebras via the same dual-characterization methodology, they give rise to two radically different views on what LE-logics are and {\em mean}. The key difference lies in a  dichotomy between two interpretive strategies, each of which identifies different sources of non-distributivity. The first such strategy, supported by the polarity-based semantics, drops the interpretation of $\wedge$ and $\vee$ as conjunction and disjunction in natural language and stipulates that LE-formulas do not denote sentences describing states of affairs, but rather objects with a different ontology, such as categories, concepts, questions, theories, to which a truth value might not necessarily be applicable. The second interpretive strategy  is supported by the graph based semantics and retains  the sentential denotation of formulas in the context of a propositional logic which can be thought of as a kind of `hyper-constructivism'. 



\subsection{LE-logics as logics of formal concepts, categories, theories, interrogative agendas...}\label{ssec:Interpretation:Polarities}

\paragraph{Polarities as abstract databases.} The idea that lattices are the proper mathematical environment for  discussing ``especially systems which are in any sense hierarchies" goes back to 
Birkhoff \cite{birkhoff1938lattices-applications}. 
Based on this idea, Wille \cite{ganter2012formal} and his collaborators developed
Formal Concept Analysis (FCA) as a theory in information science aimed at the formal representation and analysis of  conceptual structures, which has been applied to a wide range of fields ranging from  psychology, sociology and linguistics to biology and chemistry. 

Building on philosophical insights developed by the school of Port-Royal \cite{arnauld1996antoine}, Wille specified concepts 
in terms of their {\em extension}, i.e.~the set of objects which exemplify the given concept, and their {\em  intension}, i.e.~the set of attributes shared by the objects in the extension of the given concept, and identified Birkhoff's polarities $\mathbb{P} = (A, X, I)$ (cf.~Section \ref{ssec:polarities graphs}), also referred to as {\em formal contexts}, as the appropriate mathematical environment in which to  formally represent these ideas: indeed, a polarity $\mathbb{P}$ as above can be understood as an abstract representation of a {\em database}, recording information about a given  set $A$ of {\em objects}  (relevant to a given context or situation), and a set $X$  of relevant attributes or {\em features}. In this representation, the (incidence) relation  $I\subseteq A\times X$ encodes  whether object $a\in A$ has feature $x\in X$ as $aIx$. 
The Galois-adjoint pair of maps $(\cdot)^\uparrow: \mathcal{P}(A)\to \mathcal{P}(X)$ and $(\cdot)^\downarrow: \mathcal{P}(X)\to \mathcal{P}(A)$ can be understood as {\em concept-generating maps}: namely, as maps taking any set $B$ of objects to the intension $B^\uparrow$ which uniquely determines  the formal concept $(B^{\uparrow\downarrow}, B^{\uparrow})$ generated by $B$, and any set $Y$ of attributes to the extension $Y^{\downarrow}$ which uniquely determines  the formal concept $(Y^{\downarrow}, Y^{\downarrow\uparrow})$   generated by $Y$. 
Hence, the philosophical and cognitive insight that concepts do not occur in isolation, but rather arise within a hierarchy of other concepts, finds a very natural representation in the construction of the complete lattice $\mathbb{P}^+$ and its natural order as the {\em sub-concept} relation. Indeed, a subconcept of a given concept, understood as a more restrictive concept, will have a smaller extension (i.e.~fewer examples) and a larger intension (i.e.~a larger set of requirements that objects need to satisfy in order to count as examples of the given sub-concept). This interpretation accounts for the failure of distributivity, as we will concretely illustrate below. 

\paragraph{Propositional lattice logic as the basic logic of formal concepts.} 
Imposing the FCA interpretation of polarities discussed above on the polarity-based semantics of the logic $\mathrm{L}$ (Section \ref{ssec: from alg to rel}) yields an interpretation of $\mathrm{L}$-formulas as terms (i.e.~names) denoting  formal concepts.
%
%
Starting from assignments to proposition variables, any $\mathrm{L}$-formula $\phi$ is then interpreted on a given polarity $\mathbb{P} = (A, X, I)$ as a formal concept $(\val{\phi}, \descr{\phi})\in \mathbb{P}^+$; specifically, for each object $a\in A$ and feature $x\in X$, the relations $a\Vdash \phi$ and  $x\succ \phi$ can be respectively understood as `object $a$ is a member of (or exemplifies) concept $\phi$' and `feature $x$ {\em describes} concept $\phi$', in the sense that $x$ is a required attribute of every example/member of $\phi$.
Accordingly, this reading suggests that $\phi\wedge \psi$ can be understood  as `the greatest (i.e.~least restrictive) common subconcept of concept $\phi$ and concept $\psi$', i.e.~the concept the extension of which is the intersection of the extensions of  $\phi$ and $\psi$. Similarly, $\phi\vee \psi$ is `the least (i.e.~most restrictive) common superconcept of concept $\phi$ and concept $\psi$', i.e.~the concept the intension of which is the intersection of the intensions of $\phi$ and $\psi$; the constant $\top$ can be understood as the most generic (or comprehensive) concept (i.e.~the one that, when interpreted in any given polarity $\mathbb{P}$ as above, allows all objects $a\in A$ as examples) while $\bot$ as the most restrictive  (i.e.~the one that, when interpreted in any given polarity $\mathbb{P}$ as above, requires its examples to have all attributes $x\in X$). Finally, $\phi\vdash \psi$ can be understood as the statement that `concept $\phi$ is a sub-concept of concept $\psi$'. 

As mentioned above, this interpretation accounts for the failure of distributivity. Indeed,  objects in the extension of  concept $\phi\vee \psi$ are only required to have all attributes common to the intentions of concepts $\phi$ and $\psi$; 
this weaker requirement potentially allows objects in $\val{\phi\vee \psi}$ which belong to neither $\val{\phi}$ nor to $\val{\psi}$. To illustrate this point concretely, consider the  `database' of theatrical plays $\mathbb{P} = (A, X, I)$ the set of objects of which is $A: = \{a, b, c\}$, where $a$ is {\em A Midsummer Night's Dream}, $b$ is {\em King Lear}, and $c$ is {\em Julius Caesar}, while its set of features is $X: = \{x, y, z\}$, where $x$ is {\em `no happy end'}, $y$ is {\em `some characters are real historical figures'}, and $z$ is {\em `two characters fall in love with each other'}. The following picture represents $\mathbb{P}$ and its associated concept lattice. 
\begin{center}
	\begin{tikzpicture}
	\draw[very thick] (7.5, 2.5) -- (6, 3.5) --
	(6, 4.5) -- (7.5, 5.5) -- (9, 4)  -- (7.5, 2.5);
	\filldraw[black] (7.5,2.5) circle (3 pt);
	\filldraw[black] (6,3.5) circle (3 pt);
	\filldraw[black] (6,4.5) circle (3 pt);
	\filldraw[black] (7.5,5.5) circle (3 pt);
	\filldraw[black] (9,4) circle (3 pt);
	\draw (7.5, 2.2) node {$(\varnothing,xyz)$};
	\draw (5.7, 3.2) node {$(c,xy)$};
	\draw (5.7, 4.8) node {$(bc,x)$};
	\draw (7.5, 5.8) node {$(abc,\varnothing)$};
	\draw (9.3, 3.6) node {$(a,z)$};
	\draw (3.8,4) node{\Huge{$\rightsquigarrow$}};
	
	\draw[very thick] (1, 3.5) -- (0, 4.5) --
	(2, 3.5) -- (1, 4.5);
	\draw[very thick] (0, 3.5) -- (2, 4.5);
	\filldraw[black] (0,3.5) circle (3 pt); 
	\filldraw[black] (0,4.5) circle (3 pt); 
	\filldraw[black] (1,3.5) circle (3 pt); 
	\filldraw[black] (1,4.5) circle (3 pt); 
	\filldraw[black] (2,3.5) circle (3 pt); 
	\filldraw[black] (2,4.5) circle (3 pt); 
	\draw (-0.4,4.5) node {$X$};
	\draw (-0.4,4) node {$I$};
	\draw (-0.4,3.5) node {$A$};
	\draw (0,4.8) node {$x$};
	\draw (1,4.8) node {$y$};
	\draw (2,4.8) node {$z$};
	\draw (0,3.2) node {$a$};
	\draw (1,3.2) node {$b$};
	\draw (2,3.2) node {$c$};

	\end{tikzpicture}
\end{center}
Consider the atomic concept-variables $r, d, h$ where $r$ stands for `romantic comedy', $d$ for `drama' and $h$ for `historical drama'. Consider the assignment into $\mathbb{P}^+$ which maps  $r$ to $(a, z)$, $d$ to $(bc, x)$ and $h$ to $(c, yz)$. Notice that $b\in \val{h\vee r} = \val{\top} $ even though $b\notin \val{h}\cup\val{r}$. Accordingly, under this assignment, $h \vee r$ is interpreted as the top element of the concept lattice. The concept $d\wedge r$  is interpreted as the bottom element and, as historical drama is a subgenre of drama, $d\wedge h$ coincides with $h$.  Hence $(d \wedge h) \vee (d \wedge r)$ coincides with $h$ while $d \wedge (h \vee r)$ coincides with $d$. Thus distribution of $\wedge$ over $\vee$ fails.  

Dually, objects in the extension of  concept $\phi\wedge \psi$ might have more attributes in common than those in $\descr{\phi}\cup\descr{\psi}$; for instance in the example above, $r\wedge d$ is the concept $(\varnothing, xyz)$ which requires feature $y$ which is neither required by $r$ nor by $d$; hence $h \vee (r\wedge d)$ coincides with $h$, while $(h \vee d) \wedge (h \vee r)$  coincides with $d$, witnessing the failure of $\vee$ over $\wedge$ also.



\paragraph{Lattice-based normal modal logic as an epistemic logic of formal concepts.} So far, we have discussed how the polarity-based semantics of propositional lattice logic $\mathrm{L}$ allows for an interpretation of $\mathrm{L}$-formulas as names of formal concepts, and for a coherent interpretation of the meaning of all propositional lattice connectives in a way that makes the failure of distributivity a desirable feature rather than an awkward issue. Next, based on \cite{love2sorts, Tarkpaper}, we discuss a first way in which this interpretation can be extended also to the modal connectives. For the sake of simplicity, let us consider the minimal normal LE-logic $\mathrm{L}_{\Box}$, in the language specified according to the notation of Section \ref{sec:LE:logics} by instantiating $\mathcal{F}: = \varnothing$ and $\mathcal{G}: = \{\Box\}$, with $\Box$ unary and monotone. In what follows, we will abuse notation and identify $\mathrm{L}_{\Box}$ with its language.
As discussed in Section \ref{ssec:rel interpret modalities}, this logic can be interpreted on relational structures $\mathbb{F} = (\mathbb{P}, R_\Box)$ such that $\mathbb{P} = (A, X, I)$ is a polarity, and $R_{\Box}\subseteq A\times X$ is a (compatible, see Definition \ref{def:polarity:based:frm}) relation such that, for any assignment $v:\mathsf{Prop}\to \mathbb{P}^+$, corresponding  relations $\Vdash\ \subseteq\ A\times \mathrm{L}_{\Box}$ and $\succ\ \subseteq\ X\times \mathrm{L}_{\Box}$ can be defined. In the case of $\Box$-formulas, this yields
\begin{center}
	\begin{tabular}{lll}
		$ a \Vdash \Box\phi$ & iff & for all $x\in X$, if $x \succ \phi$ then $a R_{\Box} x$ \\
		$x \succ \Box\phi$ & iff &  for all $a\in A$, if $a \Vdash \Box\phi $ then $ a I x$.
	\end{tabular}
\end{center}
Building on the understanding of polarities as abstract representation of databases, the relational structures $\mathbb{F} = (\mathbb{P}, R_\Box)$ can be understood as (abstract representations of) enriched databases which, together with objective information about objects and their features encoded in the incidence relation $I$ of $\mathbb{P}$, encode also {\em subjective} information regarding whether given objects have given attributes {\em according to a given agent}; this understanding allows us to read $aR_{\Box} x$ as `object $a$ has attribute $x$ {\em according to agent $i$}. Of course, this interpretation can be further specialized so as to represent agents' knowledge ($aR_{\Box} x$ iff `agent $i$ knows that object $a$ has attribute $x$'), beliefs ($aR_{\Box} x$ iff `agent $i$ believes that object $a$ has attribute $x$'), perceptions ($aR_{\Box} x$ iff `agent $i$ sees that object $a$ has attribute $x$'), evidential reasoning ($aR_{\Box} x$ iff `agent $i$ has evidence that object $a$ has attribute $x$'), and so on. Each of these  epistemic interpretations will give rise to a different epistemic reading of  $\Box\phi$ as `concept $\phi$ according to the given agent $i$': namely, `concept $\phi$ as is known/believed/perceived/experienced 
by agent $i$'. Also in the case of $\mathrm{L}_{\Box}$-formulas, for every object $a$ and attribute $x$, the symbols $a\Vdash \Box\phi$ and $x\succ \Box\phi$ can be understood as `object $a$ is a member/example of $\Box \phi$' and `attribute $x$ describes $\Box \phi$', respectively. Interestingly, the condition that 
\[ a \Vdash \Box\phi \quad \mbox{ iff }\quad\mbox{for all } x\in X, \mbox{ if } x \succ \phi\mbox{ then }a R_{\Box} x\]
can then be informally understood as saying that any object $a$ is a member/example of concept $\phi$ according to agent $i$ if and only if agent $i$ attributes to $a$ all the defining features of concept $\phi$. This reading is indeed coherent with our informal understanding of which objects should count as members of `concept $\phi$ according to agent $i$'.

Finally, one would also expect that the different variants of epistemic interpretations would satisfy different axioms; for instance, if $\Box\phi$ is interpreted as `concept $\phi$ as is {\em known} by agent $i$', one would ask whether there is some $\mathrm{L}_{\Box}$-axioms which would encode the counterparts, in the lattice-based setting, of well known classical epistemic principles such as the {\em factivity} condition which distinguishes knowledge from belief, and what would this condition look like in the context of  polarity-based relational structures. As is well known, in the setting of classical normal modal logic, factivity is formalized as the modal axiom $\Box\phi\to \phi$ (which reads `if agent $i$ knows that $\phi$, then $\phi$ is indeed the case'). Moreover, this is a Sahlqvist formula and corresponds on Kripke frames $(W, R)$ to reflexivity or, equivalently, $\Delta\subseteq R$. Since $\mathrm{L}_{\Box}$ is not sentential, the closest approximation to the classical formula $\Box\phi\to \phi$ is the $\mathrm{L}_{\Box}$-sequent $\Box\phi\vdash \phi$, which turns out (cf.~\cite[Proposition 4.3]{roughconcepts}) to correspond on polarity-based structures $\mathbb{F}$ as above to the first-order condition $R_{\Box}\subseteq I$. That is, for every object $a$ and  feature $x$, if $aR_{\Box} x$ (i.e.~if $a$ is endowed with $x$ according to agent $i$) then $aIx$ (i.e.~object $a$ indeed has feature $x$). This condition is arguably an appropriate rendering of factivity in the setting of polarity-based relational structures, which suggests that more modal epistemic principles might retain their intended interpretation even under a substantial generalization step such as the one from the classical (i.e.~Boolean) to the lattice-based setting. Indeed, this is also the case for {\em positive introspection},\label{pageref:positive introspection} which in the language of classical modal logic is formalized as  $\Box\phi\to \Box\Box\phi $ (which reads `if agent $i$ knows that $\phi$, then  agent $i$ knows that she knows $\phi$'). As is well known, this axiom is a Sahlqvist formula the first-order correspondent of which on Kripke frames $(W, R)$ is the condition $R\circ R\subseteq R$, namely that the relation $R$ is transitive. Again, the $\mathrm{L}_{\Box}$-sequent $\Box\Box\phi\vdash \Box \phi$ turns out (cf.~\cite[Proposition 4.3]{roughconcepts}) to correspond on polarity-based structures $\mathbb{F}$ to the first-order condition\footnote{With the aid of the notation $;_{I}$ for relational composition modulo the polarity relation $I$ (cf.\ \cite[Section 3.4]{roughconcepts} for the full definition) this condition can be succinctly captured as $R_{\Box}\subseteq R_{\Box}\ ;_{I} R_{\Box}$.} that reads: for every object $a$ and  feature $x$, if agent $i$ thinks that $a$ has feature $x$, then (agent $i$ must recognize $a$ as an example of what $i$ understands as an $x$-{\em object}, i.e.~as a member of $i$'s understanding of the  formal context generated by feature $x$, and hence) agent $i$ must attribute to  $a$ also all the features that, according to $i$, are shared by all $x$-objects. As in the case of factivity, one can argue that this condition is an appropriate rendering of the principle of positive introspection in the setting of polarity-based relational structures, since is clearly an internal coherence requirement which seeks to justify any given attribution of a feature to an object by linking it to the wider context of those (other) features that are consequences of the given attribution. Lastly, the notion of {\em omniscience}, stipulating that the agent knows everything that is the case, is classically captured by the axiom $p \rightarrow \Box p$ corresponding on Kripke frames to the 
first-order condition
 $R \subseteq \Delta$. On polarity-based structures $\mathbb{F}$, the $\mathrm{L}_{\Box}$-sequent $\phi \vdash \Box \phi$ (cf.~\cite[Proposition 4.3]{roughconcepts}) corresponds to the first-order condition $I \subseteq R_{\Box}$, indicating that, whenever an object has a feature, the agent knows this.

\paragraph{Lattice-based normal modal logic as the logic of rough concepts.} As discussed above, the interpretation of $\mathrm{L}_{\Box}$ as an epistemic logic of formal concepts, facilitated by the polarity-based semantics, extends coherently from the informal meaning of the defining clauses of $\Vdash$ and $\succ$ w.r.t.~to $\Box$-formulas, all the way to the preservation of the meaning of well known epistemic principles. However, the epistemic interpretation is not the only possible one;  in what follows, we give pointers to another family of possible interpretations, proposed in \cite{roughconcepts}, where polarity-based relational structures for the languages $\mathcal{L}_{\mathrm{PML}}$ and $\mathcal{L}_{\mathrm{DML}}$  (cf.~Example \ref{example:LE languages}) are used 
to generalize Rough Set Theory (RST) \cite{Pawlak} to the setting of {\em rough concepts}. The basic models in RST are pairs $(X,R)$, called \emph{approximation spaces}, with $X$ a non-empty set and $R$ an equivalence relation on $X$. The set $X$ is to be thought of as the \emph{domain of discourse} and $R$ as an \emph{indiscernibility relation}. 
The equivalence classes of $R$  establish the granularity of the discourse by setting the limits to the distinctions that can be drawn. This granularity is captured algebraically by 
the upper and lower approximation operators arising from approximation spaces, which, when applied to any given subset $T \subseteq X$, encode the available information about $T$ as follows. The lower approximation of $\underline{T}$ of $T$ consists of those elements whose $R$-equivalence classes are contained in $T$, while the upper approximation of $T$ consists of those elements whose $R$-equivalence classes have non-empty intersection with $T$. In other words, 
\[
\underline{T}: = \bigcup\{R[z]\mid z\in T \mbox{ and } R[z]\subseteq T\} \quad \mbox{ and }\quad \overline{T}: = \bigcup\{R[z]\mid z\in T\}. 
\]
The lower approximation $\underline{T}$ can be thought of as the set of all objects that are {\em definitely} in $T$, while the upper approximation $\overline{T}$ consists of those objects that are {\em possibly} in $T$. 

As the reader would have remarked,  an approximation space is nothing but a frame for the modal logic S5, and the lower and upper approximation of $T \subseteq S$ are obtained by applying the interior and closure operators given by the S5 box and diamond, respectively. This connection with modal logic has indeed not gone unnoticed in the literature and has been elaborated in e.g.~\cite{orlowska1994rough}, \cite{banerjee1996rough} and \cite{orlowska2013incomplete}. 

In \cite{roughconcepts}, {\em conceptual approximation spaces} were defined as polarity-based $\mathcal{L}_{\mathrm{PML}}$-structures $\mathbb{F} = (\mathbb{P}, R_\Box, R_{\Diamond})$  such that $\mathbb{P} = (A, X, I)$ is a polarity, and  $R_{\Box}\subseteq A\times X$ and $R_{\Diamond}\subseteq X\times A$ are (compatible, see Definition \ref{def:polarity:based:frm})  relations verifying the first-order conditions corresponding to the following  modal axioms: $\Box \phi\vdash \Diamond \phi$ ({\em seriality})\footnote{In the presence of reflexivity, seriality becomes redundant; however, for the sake of making the generalization more modular, in \cite{roughconcepts}  the basic framework of conceptual approximation spaces only requires seriality.}; $\Box \phi\vdash  \phi$ and $\phi\vdash \Diamond \phi$ ({\em reflexivity}); $\Box \phi\vdash  \Box\Box\phi$ and $\Diamond\Diamond\phi\vdash \Diamond \phi$ ({\em transitivity}); $\phi\vdash  \Box\Diamond\phi$ and $\Diamond\Box\phi\vdash \phi$ ({\em symmetry}).

Taken together, these conditions guarantee that $\mathbb{F}^+: = (\mathbb{P}, [R_\Box], \langle R_\Diamond\rangle)$ is a complete lattice-based algebra such that  $[R_\Box]$ and $\langle R_\Diamond\rangle$ are an interior and a closure operator respectively;  moreover, $\langle R_\Diamond\rangle$ is the left adjoint of $[R_\Box]$ (i.e.~$aR_{\Box} x$ iff $xR_{\Diamond} a$ for every $a\in A$ and $x\in X$).

Under the usual interpretation of $\mathbb{P} = (A, X, I)$ as a database, one possible way to understand $aR_\Box x$ or equivalently $xR_\Diamond a$ is `there is {\em evidence} that object $a$ has feature $x$', or `object $a$ {\em demonstrably} has feature $x$' (cf.~\cite[Section 5.1]{roughconcepts}). This intuitive understanding makes it plausible to assume that $R_\Box\subseteq I$. Recall that that $\Vdash$ for $\Box$-formulas and of $\succ$ for $\Diamond$-formulas (cf.~discussion in Section \ref{ssec:rel interpret modalities}) were defined as follows:

\begin{center}
	\begin{tabular}{lll}
		$ a \Vdash \Box\phi$ & iff & for all $x\in X$, if $x \succ \phi$ then $a R_{\Box} x$ \\
		$x \succ \Diamond\phi$ & iff &  for all $a\in A$, if $a \Vdash \phi $ then $ x R_\Diamond a$.
	\end{tabular}
\end{center}
\noindent Under the  interpretation discussed above, these clauses can be understood as saying that $\Box\phi$ is the concept the examples/members of which   are exactly those objects that {\em demonstrably} have all the features shared by  $\phi$-objects, and that $\Diamond\phi$ is the concept described by the features  which all  $\phi$-objects {\em demonstrably} have. 
Hence, $\Box\phi$ can be understood as the (sub)concept of the {\em certified members} of $\phi$, while $\Diamond\phi$ as the (super) concept of the {\em potential members} of $\phi$.

Thus, under the interpretation of $R_{\Box}$ and $R_{\Diamond}$ proposed above,  the polarity-based semantics of $\mathcal{L}_{\mathrm{PML}}$ supports the understanding of $\Box\phi$ and $\Diamond\phi$  as the lower and upper approximations of  concept $\phi$, respectively. Notice that, while in approximation spaces  the  relation $R$ relates indiscernible states, and thus directly encodes the extent of our   {\em ignorance}, in the setting of conceptual approximation spaces,  $R_\Box$ (or equivalently $R_\Diamond$) directly encode the (possibly partial) extent of our  {\em knowledge} or information. 

\paragraph{From  concepts to other ontologies.} In \cite{roughconcepts}, other more specific interpretations are proposed concerning situations which span from the analysis of text databases to medical diagnoses and the analysis of markets. Accordingly, in each of these situations, $\Box\phi$ and $\Diamond\phi$  can be given more specific interpretations as  lower and upper approximations of concepts or categories or relevant clusters. 

For instance (cf.~\cite[Section 5.4,]{roughconcepts} modified), text databases can be modelled as polarity-based $\mathcal{L}_{\mathrm{PML}}$-structures $\mathbb{F} = (\mathbb{P}, R_\Box, R_{\Diamond})$ such that  $\mathbb{P} = (A, X, I)$ with $A$ being a set of documents,  $X$  a set of words, and $aIx$ being understood as `document $a$ has word $x$ as a keyword'. Formal concepts arising from such an $\mathbb{F}$ can be understood as {\em themes} or {\em topics}, intensionally described by Galois-stable sets of (key)words. In this situation,  one of the many possible interpretations of 
$aR_\Box x$  or equivalently $xR_\Diamond a$ is `document $a$ has word  $x$  as its {\em first or second} keyword', which again makes it plausible to assume that $R_\Box\subseteq I$. 

As another example (cf.~\cite[Section 5.5]{roughconcepts} modified), let $\mathbb{P} = (A, X, I)$ represent  a hospital, where $A$ is the set of patients, $X$ is the set of symptoms, and $aIx$ iff  ``patient $a$ has symptom $x$''. Concepts arising from this representation are {\em syndromes}, intensionally described by Galois-stable sets of symptoms. In this situation, let   $aR_\Box x$, or equivalently $xR_\Diamond a$, iff `$a$ has been {\em tested} for symptom $x$ with positive outcome'. 

As a third example (cf.~\cite[Section 5.8]{roughconcepts} modified), let $\mathbb{P} = (A, X, I)$  where $A$ is the set of consumers, $X$ is the set of market-products, and $aIx$ iff  `consumer $a$ buys product $x$'. Concepts arising from this representation are {\em consumer segments}, intensionally described by Galois-stable sets of market-products. In this situation, let   $aR_\Box x$, or equivalently $xR_\Diamond a$, iff `$a$ buys $x$ from a certain producer $i$'.
Then  $\Box\phi$ denotes the market share of producer $i$ in consumer segment $\phi$.

As a fourth example, let $\mathbb{P} = (A, X, I)$  where $A$ is the set of empirical hypotheses, $X$ is the set of variables, and $aIx$ iff  `hypothesis $a$ is formulated in terms of variable $x$'. Concepts arising from this representation are empirical  {\em theories}, extensionally described by Galois-stable sets of hypotheses and intensionally described by Galois-stable sets of variables. In this situation, let   $aR_\Box x$, or equivalently $xR_\Diamond a$, iff `$x$ is a  {\em dependent} variable for hypothesis $a$'. Then   $\Box\mathbf{x}$ contains all hypotheses that  compete with each other. 

Finally, let $\mathbb{P} = (A, X, I)$ represent a decision-making situation in which  $A$ is the set of decision-makers, $X$ is the set of issues, and $aIx$ iff  `agent $a$ finds issue $x$ relevant'. Concepts arising from this representation are   {\em interrogative agendas}, extensionally described by Galois-stable {\em coalitions} and intensionally described by Galois-stable sets of issues. In this situation, let $aR_\Box x$, or equivalently $xR_\Diamond a$, iff `agent $a$ regards $x$ as a positive issue'. For example, if $A$ are the members of a hiring committee and $X$ the features of potential applicants, agent $a$ could regard ``the candidate obtained their PhD recently'' as a desirable characteristic, i.e.\ a positive issue, while other agents might prefer a more experienced candidate and therefore not regard this as positive. This would mean that $R_{\Box} \subseteq I$ and that, extensionally, $\Box \phi$ would be the coalition of all agents who are positive towards all issues on interrogative agenda $\phi$.

\subsection{LE-logics as logics of informational entropy}\label{ssec:Interpretation:Graphs}

\paragraph{Reflexive graphs as generalized intuitionistic frames.}
As discussed above, polarity-based semantics supports an interpretation of non-distributive logics as {\em logics of formal concepts} (for specific signatures: epistemic logic of concepts, logic of rough concepts etc). Under this interpretation, formulas do not denote states of affairs but rather are names for formal concepts in the sense of FCA. This interpretation  can be further specialized to entities such as categories, theories, and interrogative agendas. 

Graph-based semantics suggests quite another interpretation of nondistributive logics, in which formulas do denote states of affairs;   below we will argue that, under this interpretation,  non-distributive lattice logic can be understood as a hyper-constructivist logic which generalizes intuitionistic logic just like intuitionistic logic generalizes classical logic. 

As discussed in Section \ref{ssec: from alg to rel},  for any reflexive graph $\mathbb{X} = (Z, E)$, 
homomorphic assignments $\overline{v}:\mathrm{L}\to\mathbb{X}^+$
map $\mathrm{L}$-formulas $\varphi$ to tuples $(\val{\varphi}, \descr{\varphi})$ which, as discussed in Section \ref{ssec: projection}, reduce to $(\val{\varphi}, \val{\varphi}^c)$  when $E: = \Delta$. Hence,  $\val{\varphi} = \{z\in Z\mid z\Vdash\phi\}$ and $\descr{\varphi} = \{z\in Z\mid z\succ\phi\}$ can be respectively understood as  the {\em satisfaction}  and  {\em refutation set} of $\phi$ under $\overline{v}$. Since  $(\val{\varphi}, \descr{\varphi})$ is a Galois-stable pair, also when $E: = \Delta$,  the satisfaction and refutation set of a given formula completely determine each other via the identities $\descr{\varphi} = \val{\varphi}^{\uparrow}$ and $\val{\varphi} = \descr{\varphi}^{\downarrow}$. However, as we will see, in contrast with the classical and intuitionistic setting,  at given state $z$, the truth value of a given  formula $\phi$ can  be {\em undefined} (i.e.~$z\not\Vdash \phi$ and $z\not\succ\phi$). This {\em potential indeterminacy} of formulas at states of graph-based models is the main characterizing feature of this semantic setting, and can be understood as witnessing the failure of the principle of excluded middle not just anymore at the level of the object language (as is the case of intuitionistic logic) but at the more fundamental, {\em meta-linguistic} level of the satisfaction and refutation of formulas. This property of the graph-based semantics of non-distributive logics justifies our view of non-distributive logics as `hyper-constructivist' logics.

In order to discuss this generalization, it will be useful to first recall how the state-based semantics of intuitionistic logic  generalizes the state-based semantic of classical propositional logic. As discussed in Section \ref{ssec: from alg to rel}, a `relational structure' for classical propositional logic is a structure $\mathbb{F} = (S, \Delta)$ where $S$ a nonempty set and $\Delta\subseteq S\times S$ is the identity relation. As is well known, given an assignment $v:\mathsf{Prop}\to \mathbb{F}^+$, any  formula is either true or false at each state (but at no state can a formula be {\em both} true {\em and} false), it is true precisely when its negation is false, and its truth value only depends on the values of its occurring propositional variables at the given state. A relational structure for intuitionistic propositional logic is a structure $\mathbb{F} = (S, \leq)$ where $S$ a nonempty set and $\leq$ is a reflexive and transitive relation. In this case, for a given assignment $v:\mathsf{Prop}\to \mathbb{F}^+$, any  formula is again either true or false at each state (but never both true and false); however, the fact that a given formula is refuted at a given state does not imply that the negation of that formula is satisfied at that state; indeed, the given state   might  refute both the formula and its negation, yielding the well known failure of the classical principle of excluded middle. More generally, the {\em satisfaction}  of a formula at a given state might depend on the truth values of its occurring propositional variables at the {\em successors} of the given state. 
The characterizing properties of intuitionistic satisfaction, those that mark its difference from classical satisfaction, are all grounded on the fact that (homomorphic) assignments of proposition variables are {\em persistent}, in the sense of being upward-closed with respect to $\leq$, i.e.~the fact that $v(\phi)\in \mathbb{F}^+ \cong \mathcal{P}^{\uparrow}(S)$ for any formula $\phi$, and hence  if  $\phi$ is true at a given state $s$, it   will remain true along any forward-looking $\leq$-branch stemming from  $s$ (and dually, if $\phi$ is false at a given state $s$, it  was so throughout any backward-looking $\leq$-branch stemming from  $s$). 

Persistence supports our understanding of intuitionistic truth as an inherently {\em procedural} truth: under the  {\em higher standard} required by having to declare true at a given state only those statements that e.g.~are backed by evidence in support of their truth at that state, or for which a procedure has been completed at that state which effectively establishes their truth, the failure of the principle of excluded middle becomes a desirable feature, since, e.g.~at a given state, there might not be enough evidence in support of a given statement or in support of its negation; however, it is also reasonable to require that, once proven at a given state, a statement cannot be unproven,  therefore its (proven) truth persists at the successors of the given state. 
As mentioned above, when moving from the Boolean to the intuitionistic setting, the meaning of the implication becomes ``intensional'', in the specific sense that the satisfaction of $p\to q$ at any given state $s$ of an intuitionistic frame $(S, \leq)$ does not just depend on the value of $p$ and $q$ at $s$, but also on the values of $p$ and $q$ at the $\leq$-successors of $s$. This change in the interpretation of $\to$-formulas can be explained from a technical ground  as the consequence of translating the interpretation of $\to$-formulas from $\mathbb{F}^+$ to  $\mathbb{F} = (S, \leq)$, as discussed in Section \ref{ssec: from alg to rel}, when $\mathbb{F}^+$ is the perfect Heyting algebra $\mathcal{P}^{\uparrow}(S)$ rather than the perfect Boolean algebra $\mathcal{P}(S)$; however, this  interpretation  is also
coherent with our understanding of intuitionistic truth as a procedural truth, since, as is well known, if  a procedure is available for establishing that $\phi\to\psi$ is true at a given state $s$, then at any given evolution of $s$ (including $s$ itself) it will be possible to use this same procedure to transform a proof of $\phi$ (whenever available) into a proof of $\psi$, and conversely.

Having summarised the salient features of the state-based semantics of intuitionistic logic, the following proposition establishes how it can be seen as a special instance of graph-based semantics. 
\begin{prop}
\label{prop:transitive forces Heyting}
For any reflexive graph $\mathbb{X} = (Z, E)$,
\begin{enumerate} 
\item if $E$ is  transitive (i.e.~$E$ is a preorder), then $\mathbb{X}^+$ is a complete and completely distributive lattice (hence a perfect Heyting algebra). 
\item  if $E$ is antisymmetric (i.e.~$zEz'$ and $z'Ez$ imply $z = z'$ for any $z, z'\in Z$) and $\mathbb{X}^+$ is  completely distributive, then $E$ is transitive.\footnote{The antisymmetry assumption is necessary: in personal communication, Andrew Craig observed that $\mathbb{X} = (Z, E)$ with $Z: = \{u, v, z\}$ and $E: = Z\times Z\setminus \{(u, z)\}$ is an example of a reflexive, non antisymmetric and non-transitive graph such that  $\mathbb{X}^+$ is the 3-element chain (hence is distributive). We thank Apostolos Tzimoulis for suggesting the structure of the proof of item 2 of Proposition \ref{prop:transitive forces Heyting}.}
\end{enumerate}
\end{prop}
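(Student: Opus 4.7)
For part (1), observe that the polarity associated with $\X$ is $\mathbb{P}_\X = (Z, Z, I_{E^c})$, and unfolding the Galois adjunction yields, for every $B \subseteq Z$, that $B^{\uparrow\downarrow} = \{z \in Z : E[z] \subseteq E[B]\}$, where $E[B] = \bigcup_{b \in B} E[b]$. The plan is to verify that when $E$ is reflexive and transitive, in fact $B^{\uparrow\downarrow} = E[B]$: the inclusion $\subseteq$ is immediate from reflexivity (any $z \in B^{\uparrow\downarrow}$ satisfies $z \in E[z] \subseteq E[B]$), and the reverse inclusion uses transitivity (if $bEz$ for some $b \in B$, then $E[z] \subseteq E[b] \subseteq E[B]$). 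Consequently the Galois-stable subsets of $\mathbb{P}_\X$ are precisely the $E$-upward-closed subsets of $Z$, so $\X^+$ is isomorphic to the Alexandrov lattice of the preorder $(Z, E)$, a complete ring of sets, and therefore completely distributive (and in particular a perfect Heyting algebra).

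For part (2), I would argue the contrapositive: assuming $E$ is antisymmetric but not transitive, I show $\X^+$ fails to be distributive, which already precludes complete distributivity. The starting point is to pick $u, v, w \in Z$ with $uEv$, $vEw$, $\neg uEw$; these must be pairwise distinct, since otherwise reflexivity would yield $uEw$. Antisymmetry combined with distinctness then produces the strict non-edges $\neg vEu$ and $\neg wEv$.

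The plan is to exhibit a non-distributive sublattice of $\X^+$ modelled on a minimal three-point example. In the test case $Z = \{u, v, w\}$ with $E = \Delta \cup \{(u,v), (v,w)\}$, a direct calculation using $E[u] = \{u, v\}$, $E[v] = \{v, w\}$ and $E[w] = \{w\}$ shows that $\X^+$ is precisely the pentagon $N_5$ with elements $\varnothing, \{u\}^{\uparrow\downarrow}, \{w\}^{\uparrow\downarrow}, \{v, w\}^{\uparrow\downarrow}, \{u, v, w\}^{\uparrow\downarrow}$. Following this template, in the general case consider $Q = \{u\}^{\uparrow\downarrow}$, $P = \{w\}^{\uparrow\downarrow}$ and $R = \{v, w\}^{\uparrow\downarrow}$, together with $P \wedge Q$ and $P \vee Q$ as the would-be bottom and top of the sublattice. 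Using the closure formula above, one verifies: $P \subsetneq R$, with $v$ witnessing strictness (via $\neg wEv$, which gives $v \notin E[w]$, hence $v \notin P$); and $Q$ is incomparable with both $P$ and $R$ (via $\neg uEw$ on one side, and $\neg vEu$, $\neg wEv$ on the other). In the sub-case where additionally $wEu$ holds, the configuration is a $3$-cycle and the analogous calculation from $E = \Delta \cup \{(u,v), (v,w), (w,u)\}$ shows that $\{u\}^{\uparrow\downarrow}, \{v\}^{\uparrow\downarrow}, \{w\}^{\uparrow\downarrow}$ form an $M_3$ sublattice.

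The main obstacle is that, when $v$ (or $u, w$) has $E$-successors outside $\{u, v, w\}$, the Galois closures may pick up additional elements and the identities $Q \vee P = Q \vee R$ and $Q \wedge P = Q \wedge R$ needed for $N_5$ are not automatic. A robust argument would replace $P$ by the largest Galois-stable subset of $R$ not containing $v$ (which exists because the family of such subsets is closed under joins), and verify the pentagon identities directly from the closure formula. The essential role of the antisymmetry hypothesis is precisely to secure the strict non-edges $\neg vEu$ and $\neg wEv$, which block the kind of collapse illustrated by the three-point counterexample in the footnote: dropping antisymmetry allows $uEv$ and $vEu$ to coexist, collapsing the would-be pentagon into a distributive chain despite the failure of transitivity.
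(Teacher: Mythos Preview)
Your argument for item~1 is correct and essentially identical to the paper's: both verify that under reflexivity and transitivity the Galois closure $B^{\uparrow\downarrow}$ coincides with the $E$-upward closure $E[B]$, so that $\mathbb{X}^+$ is the lattice of up-sets of the preorder $(Z,E)$, hence completely distributive.

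For item~2 your approach diverges from the paper's and contains a genuine gap. The paper does \emph{not} argue by contraposition via an $N_5$ or $M_3$ sublattice; it proves directly that complete distributivity together with antisymmetry forces the equivalence $zEu \Leftrightarrow \underline{\mathbf{u}} \leq \underline{\mathbf{z}}$ (with $\underline{\mathbf{z}} = (z^{\uparrow\downarrow}, z^{\uparrow})$), from which transitivity is immediate. The key steps are: antisymmetry and reflexivity make each $\underline{\mathbf{z}}$ completely join-irreducible (its unique lower cover being $\underline{\mathbf{z}} \wedge \overline{\mathbf{z}}$); complete distributivity then upgrades this to completely join-\emph{prime}; and one shows that $\kappa(\underline{\mathbf{u}}) := \bigvee\{\underline{\mathbf{w}} \mid \underline{\mathbf{u}} \nleq \underline{\mathbf{w}}\}$ coincides with $\overline{\mathbf{u}}$, so that join-primeness yields $\underline{\mathbf{z}} \nleq \kappa(\underline{\mathbf{u}}) \Leftrightarrow \underline{\mathbf{u}} \leq \underline{\mathbf{z}}$.

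Your contrapositive route is natural, but the step you yourself flag as ``the main obstacle'' is not overcome. You verify the $N_5$ (resp.\ $M_3$) configuration only in the three-point models; in the general case the identities $Q \vee P = Q \vee R$ and $Q \wedge P = Q \wedge R$ are simply asserted to be attainable. Your proposed repair---replace $P$ by the largest Galois-stable subset of $R$ not containing $v$---rests on the claim that this family is closed under joins in $\mathbb{X}^+$. But the join of $S_1, S_2$ in $\mathbb{X}^+$ has extension $(S_1 \cup S_2)^{\uparrow\downarrow} = \{z \mid E[z] \subseteq E[S_1] \cup E[S_2]\}$, and nothing in your hypotheses prevents $E[v] \subseteq E[S_1] \cup E[S_2]$ while $E[v] \nsubseteq E[S_i]$ for each $i$; so the family need not be join-closed, and even granting a maximal such $P'$, the pentagon identities would still require a separate argument. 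In short, the sublattice-embedding strategy is plausible but substantially harder to complete than your sketch suggests, which is presumably why the paper opts for the direct lattice-theoretic argument instead.
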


\begin{proof}
As to item 1, it is enough to show that the  Galois-stable sets of $\mathbb{P}_{\mathbb{X}}^+\subseteq \mathcal{P}(Z_A)$ (resp.~$\mathbb{P}_{\mathbb{X}}^+\subseteq \mathcal{P}(Z_X)$) are exactly the upward-closed (resp.~the downward-closed) subsets. To this end, it is enough to show that   $Y^{\uparrow\downarrow} = Y {\uparrow}: = \{z\in Z\mid x Ez$ for some $x\in Y\}$ for any $Y\subseteq Z_A$ (or dually, that $Y^{\downarrow\uparrow} = Y {\downarrow}: = \{z\in Z\mid z Ex$ for some $x\in Y\}$ for any $Y\subseteq Z_X$). 
Indeed, $Y^{\uparrow} = \{x\in Z_X\mid$ for all $a$, if $a\in Y$ then $aE^c x\} = {Y{\uparrow}}^c$. Then $Y^{\uparrow\downarrow}  = {{{Y{\uparrow}}^c}{\downarrow}}^{c}$. Given that $Y {\uparrow}$ is by definition an upward-closed set, ${Y {\uparrow}}^c$ is a downward-closed set, hence ${Y {\uparrow}}^c{\downarrow} = {Y {\uparrow}}^c$; hence, ${{Y{\uparrow}}^c{\downarrow}}^{c} = {{Y{\uparrow}}^c}^{c} = Y{\uparrow}$.

As to item 2, for every $z\in Z$, we let $\underline{\mathbf{z}} = \mathbf{z}_s = (z^{\uparrow\downarrow}, z^{\uparrow})$, and $\overline{\mathbf{z}} = \mathbf{z}_r = (z^{\downarrow}, z^{\downarrow\uparrow})$. To show that $E$ is transitive,  it is enough to show that, for any $u, z\in Z$,
\begin{equation}
\label{eq: sufficient for transitivity}
zEu \quad\mbox{ iff }\quad \underline{\mathbf{u}} \leq \underline{\mathbf{z}}.
\end{equation}
By definition,  $\underline{\mathbf{u}} \leq \underline{\mathbf{z}}$ iff $z^{\uparrow} \subseteq u^{\uparrow}$ iff $\{z'\in Z\mid u Ez'\} =(u^{\uparrow})^c \subseteq (z^{\uparrow})^c = \{z'\in Z\mid z Ez'\}$. Since $uEu$ by the reflexivity of $E$, this inclusion implies that $zEu$, as required.   Conversely, let us assume that $z E u$. Then $z\in z^{\uparrow\downarrow}$ and $z\notin u^{\downarrow}$, hence $\underline{\mathbf{z}} \nleq \overline{\mathbf{u}}$. Notice that  when $z = u$ the same argument yields $\underline{\mathbf{u}} \nleq \overline{\mathbf{u}}$. To finish the proof, let
{\footnotesize
\[\kappa (\underline{\mathbf{u}}) :  = \bigvee\{\underline{\mathbf{w}}\mid w\in Z\mbox{ and }\underline{\mathbf{u}}\nleq \underline{\mathbf{w}}\} = \bigvee\{\underline{\mathbf{w}}\mid w\in Z\mbox{ and } u\notin w^{\uparrow\downarrow}\} = \bigvee\{\underline{\mathbf{w}}\mid w\in Z\mbox{ and } w^{\uparrow}\nsubseteq u^{\uparrow}\}. \]
}
  It is enough to show that: (a)    $\underline{\mathbf{u}} \leq \underline{\mathbf{z}}$ iff $\underline{\mathbf{z}}\nleq \kappa(\underline{\mathbf{u}})$, and (b) $\overline{\mathbf{u}} = \kappa (\underline{\mathbf{u}})$. 
  
  Before addressing (a), let us preliminarily show that $\underline{\mathbf{z}}$ is completely join-prime for every $z\in Z$. Because by assumption $\mathbb{X}^+$ is completely distributive, it is enough to show that $\underline{\mathbf{z}}$ is completely join-irreducible, and for this latter claim, it is enough to show that $\underline{\mathbf{z}}\wedge \overline{\mathbf{z}}$ is the greatest element of $\mathbb{X}^+$ that is strictly less than $\underline{\mathbf{z}}$. 
  Since, as discussed above,  $\underline{\mathbf{z}} \nleq \overline{\mathbf{z}}$, it follows that $\underline{\mathbf{z}}\wedge \overline{\mathbf{z}}<\underline{\mathbf{z}}$.  Let us show that $\underline{\mathbf{z}}\wedge \overline{\mathbf{z}}$ is the greatest such element by showing that $\val{\underline{\mathbf{z}}\wedge \overline{\mathbf{z}}} = z^{\uparrow\downarrow}\setminus \{z\}$. By definition, $\val{\underline{\mathbf{z}}\wedge \overline{\mathbf{z}}} = \val{\underline{\mathbf{z}}}\cap \val{ \overline{\mathbf{z}}} = z^{\uparrow\downarrow}\cap z^{\downarrow}$. Hence, we are left to show that  $z^{\uparrow\downarrow}\cap z^{\downarrow}= z^{\uparrow\downarrow}\setminus \{z\}$. By the reflexivity of $E$, if $w\in  z^{\uparrow\downarrow}\cap z^{\downarrow}$ then $w\neq z$, and hence $w\in z^{\uparrow\downarrow}\setminus \{z\}$. Conversely, if $w\in z^{\uparrow\downarrow}\setminus \{z\}$, the antisymmetry of $E$ implies that either $wE^c z$ (i.e.~$w\in z^{\downarrow}$, which is what we require) or $z E^c w$ (i.e.~$w\in z^{\uparrow}$). This latter condition is excluded, again by the reflexivity of $E$ and the assumption that $w\in z^{\uparrow\downarrow}$. 

As to (a), the complete join-primeness of  $\underline{\mathbf{z}}$ implies that  $\underline{\mathbf{z}}\leq \kappa(\underline{\mathbf{u}}) = \bigvee\{\underline{\mathbf{w}}\mid w\in Z\mbox{ and }\underline{\mathbf{u}}\nleq \underline{\mathbf{w}}\}$ iff $\underline{\mathbf{z}}\in \{\underline{\mathbf{w}}\mid w\in Z\mbox{ and }\underline{\mathbf{u}}\nleq \underline{\mathbf{w}}\}$  iff $\underline{\mathbf{u}}\nleq \underline{\mathbf{z}}$, as required. 

Notice that (a) instantiated to $z = u$ yields  $\underline{\mathbf{u}} \nleq \kappa (\underline{\mathbf{u}})$.
Notice also that (a) can be strengthened to (a'): $\underline{\mathbf{u}} \leq \mathbf{c}$ iff $\mathbf{c}\nleq \kappa(\underline{\mathbf{u}})$ for all $u\in Z$ and $\mathbf{c}\in \mathbb{X}^+$. Indeed, by Proposition \ref{prop:join and meet generators}, $\bigvee\{\underline{\mathbf{z}}\mid \underline{\mathbf{z}}\leq \mathbf{c}\} = \mathbf{c}\leq \kappa(\underline{\mathbf{u}})$ iff $\underline{\mathbf{z}}\leq \kappa(\underline{\mathbf{u}})$ for every $z\in Z$ such that $\underline{\mathbf{z}}\leq \mathbf{c}$, iff, by (a), $\underline{\mathbf{u}}\nleq \underline{\mathbf{z}}$ for every $z\in Z$ such that $\underline{\mathbf{z}}\leq \mathbf{c}$. By the complete join-primeness of $\underline{\mathbf{u}}$, the last condition is equivalent to $\underline{\mathbf{u}}\nleq \mathbf{c}$.

These remarks imply  that  $\kappa(\underline{\mathbf{u}})$ is completely meet-irreducible. 
Indeed, let $Y\subseteq Z$ such that $\bigwedge \{\overline{\mathbf{z}}\mid z\in Y\} =  \kappa(\underline{\mathbf{u}})  $.  If $\overline{\mathbf{z}}> \kappa(\underline{\mathbf{u}})$ for every $z\in Y$, then $\overline{\mathbf{z}}\nleq\kappa(\underline{\mathbf{u}})$, i.e.~by (a'), $\underline{\mathbf{u}}\leq \overline{\mathbf{z}}$ for every $z\in Y$, and hence $\underline{\mathbf{u}} \leq \kappa (\underline{\mathbf{u}})$, contradicting $\underline{\mathbf{u}} \nleq \kappa (\underline{\mathbf{u}})$.

As to (b),  by Proposition \ref{prop:join and meet generators}, $\overline{\mathbf{u}} = \bigvee\{\underline{\mathbf{w}}\mid w\in Z\mbox{ and }\underline{\mathbf{w}}\leq \overline{\mathbf{u}}\} = \bigvee\{\underline{\mathbf{w}}\mid w\in Z\mbox{ and } w \in  u^{\downarrow}\}$. Hence, to show that $\overline{\mathbf{u}} \leq \kappa (\underline{\mathbf{u}})$, it is enough to show that, if $w\in u^{\downarrow}$,
 then  $u\notin w^{\uparrow\downarrow} = \{z\in Z\mid \forall z'(z'\in w^{\uparrow}\Rightarrow zE^c z')\}$. By definition, $w\in u^{\downarrow}$ iff $wE^c u$ iff $u\in w^{\uparrow}$. Moreover, by reflexivity, $u Eu$. Hence, $u\notin w^{\uparrow\downarrow}$, as required. Conversely, to show that $\kappa (\underline{\mathbf{u}})\leq \overline{\mathbf{u}} $, notice that Proposition \ref{prop:join and meet generators} and $\kappa (\underline{\mathbf{u}})$ being completely meet-irreducible imply that $\kappa (\underline{\mathbf{u}}) = \overline{\mathbf{w}}$ for some $w\in Z$. Hence, from $\underline{\mathbf{u}} \nleq \kappa (\underline{\mathbf{u}})   = \overline{\mathbf{w}}$ it follows that $u\notin w^{\downarrow}$ (i.e.~$uEw$), while from $\overline{\mathbf{u}} \leq \kappa (\underline{\mathbf{u}}) = \overline{\mathbf{w}}$ it follows that $w\in u^{\downarrow\uparrow}$, i.e.~$u^{\downarrow}\subseteq w^{\downarrow}$. Suppose for contradiction that $u\neq w$. Then, by antisymmetry, $uEw$ implies that $wE^c u$, i.e.~$w\in u^{\downarrow}\subseteq w^{\downarrow}$. Hence, $wE^cw$, contradicting the reflexivity of $E$.
\end{proof}
 
The proposition above shows that the Galois-stability of satisfaction (resp.~refutation) sets projects onto their being $\leq$-upward (resp.~$\leq$-downward) closed in the intuitionistic  setting. This establishes a tight link between, on the one hand, the transitivity of intuitionistic frames and, on the other,  a package of three characterizing properties of intuitionistic logic, namely  {\em persistence} of intuitionistic satisfaction, {\em distributivity} of intuitionistic $\wedge$ and $\vee$, and the relation of refutation coinciding with the one of {\em non-satisfaction}. 
 In the absence of transitivity, all  three properties in this package are expected to fail. 
 Indeed, let us illustrate this fact by considering the  reflexive (and antisymmetric but not transitive) graph $\mathbb{X} = (Z, E)$ shown in the left-hand side of the picture below. The polarity drawn in the centre of the picture is the polarity $\mathbb{P_X}$ associated with $\mathbb{X}$, and the (nondistributive)
 lattice on the right is  $\mathbb{X}^+ = \mathbb{P}_\mathbb{X}^+$. 
 
\begin{center}
\resizebox{.9\textwidth}{!}{
\begin{tikzpicture}
\draw[very thick] (7.5, 2.5) -- (6, 3.5) --
	(6, 4.5) -- (7.5, 5.5) -- (9, 4)  -- (7.5, 2.5);
	\filldraw[black] (7.5,2.5) circle (3 pt);
	\filldraw[black] (6,3.5) circle (3 pt);
	\filldraw[black] (6,4.5) circle (3 pt);
	\filldraw[black] (7.5,5.5) circle (3 pt);
	\filldraw[black] (9,4) circle (3 pt);
	\draw (7.5, 2.2) node {$(\varnothing,uvz)$};
	\draw (5.7, 3.2) node {$(z,uv)$};
	\draw (5.7, 4.8) node {$(vz,u)$};
	\draw (7.5, 5.8) node {$(uvz,\varnothing)$};
	\draw (9.3, 3.6) node {$(u,z)$};
    \draw (4.3,4) node{{\Huge{$\rightsquigarrow$}}};

\draw[very thick] (1, 3.5) -- (0, 4.5) --
	(2, 3.5) -- (1, 4.5);
	\draw[very thick] (0, 3.5) -- (2, 4.5);
	\filldraw[black] (0,3.5) circle (3 pt); 
	\filldraw[black] (0,4.5) circle (3 pt); 
	\filldraw[black] (1,3.5) circle (3 pt); 
	\filldraw[black] (1,4.5) circle (3 pt); 
	\filldraw[black] (2,3.5) circle (3 pt); 
	\filldraw[black] (2,4.5) circle (3 pt); 
    \draw (-0.4,4.5) node {$Z$};
    \draw (-0.4,4) node {$I_{E^c}$};
    \draw (-0.4,3.5) node {$Z$};
	\draw (0,4.8) node {$u$};
	\draw (1,4.8) node {$v$};
	\draw (2,4.8) node {$z$};
	\draw (0,3.2) node {$u$};
	\draw (1,3.2) node {$v$};
	\draw (2,3.2) node {$z$};
    
     \draw (-1.5,4) node{{\Huge{$\rightsquigarrow$}}};
     \filldraw[black] (-5,4) circle (3 pt);
     \draw (-5,3.7) node {$u$};
	\filldraw[black] (-4,4) circle (3 pt);
	 \draw (-4,3.7) node {$v$};
	\filldraw[black] (-3,4) circle (3 pt);
	 \draw (-3,3.7) node {$z$};
	\draw[very thick, ->] (-4.8, 4) -- (-4.2, 4);
	\draw[very thick, ->] (-3.8, 4) -- (-3.2, 4);
	\draw[very thick,  <-] (-5.1, 4.1)  .. controls (-5.7, 4.8) and  (-4.2, 4.8)  .. (-4.9, 4.1); 
\draw[very thick,  <-] (-4.1, 4.1)  .. controls (-4.7, 4.8) and  (-3.2, 4.8)  .. (-3.9, 4.1); 
\draw[very thick,  <-] (-3.1, 4.1)  .. controls (-3.7, 4.8) and  (-2.2, 4.8)  .. (-2.9, 4.1); 
\end{tikzpicture}
}
\end{center}

Let us consider a (homomorphic) assignment $\overline{v}:\mathrm{L}\to\mathbb{X}^+$  such that $\overline{v}(p) = (z, uv)$ and $\overline{v}(q) = (u, z)$. The following picture shows how this assignment translates into the interpretation of $p$ and $q$ on $\mathbb{X}$. We follow the convention that for every $y\in Z$ and formula $\phi$, if $y\Vdash \phi$  then $\phi$ appears above $y$, and if $y\succ \phi$  then $\phi$ appears below $y$.

\begin{center}
\resizebox{.9\textwidth}{!}{
\begin{tikzpicture}
\draw[very thick] (7.5, 2.5) -- (6, 3.5) --
	(6, 4.5) -- (7.5, 5.5) -- (9, 4)  -- (7.5, 2.5);
	\filldraw[black] (7.5,2.5) circle (3 pt);
	\filldraw[black] (6,3.5) circle (3 pt);
	\filldraw[black] (6,4.5) circle (3 pt);
	\filldraw[black] (7.5,5.5) circle (3 pt);
	\filldraw[black] (9,4) circle (3 pt);
	\draw (7.5, 2.2) node {$(\varnothing,uvz)$};
	\draw (5.7, 3.2) node {$(z,uv)$};
	\draw (5.7, 4.8) node {$(vz,u)$};
    \draw (5.5, 3.5) node {{$\overline{v}(p)$}};
	\draw (7.5, 5.8) node {$(uvz,\varnothing)$};
	\draw (9.3, 3.6) node {$(u,z)$};
    \draw (9.1, 4.5) node {$\overline{v}(q)$};
    \draw (4.3,4) node{\Huge{$\leftsquigarrow$}};

\draw[very thick] (1, 3.5) -- (0, 4.5) --
	(2, 3.5) -- (1, 4.5);
	\draw[very thick] (0, 3.5) -- (2, 4.5);
	\filldraw[black] (0,3.5) circle (3 pt); 
	\filldraw[black] (0,4.5) circle (3 pt); 
	\filldraw[black] (1,3.5) circle (3 pt); 
	\filldraw[black] (1,4.5) circle (3 pt); 
	\filldraw[black] (2,3.5) circle (3 pt); 
	\filldraw[black] (2,4.5) circle (3 pt); 
    \draw (-0.4,4.5) node {$Z$};
    \draw (-0.4,4) node {$I_{E^c}$};
    \draw (-0.4,3.5) node {$Z$};
	\draw (0,4.8) node {$u$};
	\draw (1,4.8) node {$v$};
	\draw (2,4.8) node {$z$};
	\draw (0,3.2) node {$u$};
	\draw (1,3.2) node {$v$};
	\draw (2,3.2) node {$z$};
    
     \draw (-1.5,4) node{\Huge{$\leftsquigarrow$}};
     \filldraw[black] (-5,4) circle (3 pt);
     \draw (-5,3.7) node {$u$};
      \draw (-5, 4.9) node {{$q$}};
	  \draw (-5, 3.3) node {{$p$}};
	\filldraw[black] (-4,4) circle (3 pt);
	 \draw (-4,3.7) node {$v$};
	   \draw (-4, 3.3) node {{$p$}};
	\filldraw[black] (-3,4) circle (3 pt);
	 \draw (-3,3.7) node {$z$};
	 \draw (-3, 4.9) node {{$p$}};
	  \draw (-3, 3.3) node {{$q$}};

	\draw[very thick, ->] (-4.8, 4) -- (-4.2, 4);
	\draw[very thick, ->] (-3.8, 4) -- (-3.2, 4);
	\draw[very thick,  <-] (-5.1, 4.1)  .. controls (-5.7, 4.8) and  (-4.2, 4.8)  .. (-4.9, 4.1); 
\draw[very thick,  <-] (-4.1, 4.1)  .. controls (-4.7, 4.8) and  (-3.2, 4.8)  .. (-3.9, 4.1); 
\draw[very thick,  <-] (-3.1, 4.1)  .. controls (-3.7, 4.8) and  (-2.2, 4.8)  .. (-2.9, 4.1); 
\end{tikzpicture}
}
\end{center}
The left-hand side of the picture above shows that indeed, 
the formula $q$ which is true at $u$ becomes first indeterminate at $v$ and then false at $z$. Hence, non-distributive satisfaction does not need to be persistent, and non-distributive refutation is {\em different} from  non-satisfaction, since $v\not\Vdash q$ and $v\not\succ q$. Together with the failure of distributivity (which in the example above is yielded by the fact that $\mathbb{X}^+$ is nondistributive), these are the key differences setting apart non-distributive evaluation from  intuitionistic evaluation. Their role in our conceptual interpretation of this semantics will be discussed below.  
So far, we have discussed these differences  in terms of properties holding in intuitionistic models but failing  in graph-based models.
The following proposition identifies a property holding in graph-based models. We will refer to it as  {\em weak persistence}. 
\begin{prop}
For every (reflexive) graph $\mathbb{X} = (Z, E)$ any $\mathrm{L}$-formula $\phi$, any $z, z'\in Z$ and any homomorphic assignment $\overline{v}:\mathrm{L}\to \mathbb{X}^+$, if $zEz'$ and $z\Vdash \phi$, then $z'\not \succ\phi$.
\end{prop}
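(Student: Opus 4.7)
The plan is to bypass induction on $\phi$ entirely and reduce the statement to a single order-theoretic observation, using the algebraic characterisations of $\Vdash$ and $\succ$ established in Section \ref{ssec: from alg to rel} (equations \eqref{eq: desiderata satisfaction-graph} and \eqref{eq: desiderata co-satisfaction-graph}). Suppose for contradiction that $zEz'$, $z\Vdash\phi$ and $z'\succ\phi$ all hold. Then \eqref{eq: desiderata satisfaction-graph} gives $\mathbf{z}_s\leq\overline{v}(\phi)$, while \eqref{eq: desiderata co-satisfaction-graph} gives $\overline{v}(\phi)\leq\mathbf{z'}_r$, so by transitivity of the order of $\mathbb{X}^+$ we obtain $\mathbf{z}_s\leq\mathbf{z'}_r$.

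Next, I would unpack this inequality in terms of extensions. By the definitions $\mathbf{z}_s=(z^{\uparrow\downarrow},z^{\uparrow})$ and $\mathbf{z'}_r=(z'^{\downarrow},z'^{\downarrow\uparrow})$, the inequality $\mathbf{z}_s\leq\mathbf{z'}_r$ in $\mathbb{X}^+$ is equivalent to $z^{\uparrow\downarrow}\subseteq z'^{\downarrow}$. On the one hand, the general Galois-connection property $B\subseteq B^{\uparrow\downarrow}$ applied to $B=\{z\}$ gives $z\in z^{\uparrow\downarrow}$. On the other hand, unfolding the definition of the polarity $\mathbb{P_X}=(Z,Z,I_{E^c})$ yields $z'^{\downarrow}=\{a\in Z\mid aI_{E^c}z'\}=\{a\in Z\mid (a,z')\notin E\}$, so the hypothesis $zEz'$ is precisely the statement that $z\notin z'^{\downarrow}$. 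This contradicts $z^{\uparrow\downarrow}\subseteq z'^{\downarrow}$, and the proof is complete.

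I do not foresee a real obstacle: the work has already been done in setting up the correspondence between homomorphic assignments $\overline{v}$ and the pair $(\Vdash,\succ)$ via the two characterising equivalences. The only small point requiring care is keeping the bookkeeping of the polarity $\mathbb{P_X}$ correct, namely that the incidence relation of $\mathbb{P_X}$ is $I_{E^c}$ (the complement of $E$) rather than $E$ itself, which is exactly what makes $zEz'$ translate into $z\notin z'^{\downarrow}$ and hence into the desired contradiction. Notice also that reflexivity of $E$ is not used in the argument; the conclusion relies only on general properties of the Galois connection and on the definition of $I_{E^c}$.
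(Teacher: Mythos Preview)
Your proof is correct and takes essentially the same approach as the paper's: both avoid induction and unfold the definitions of $\Vdash$ and $\succ$ via the Galois connection of $\mathbb{P}_{\mathbb{X}}$. The paper works directly with the sets $\val{\phi}$ and $\descr{\phi}=\val{\phi}^{\uparrow}$ (observing that $z\in\val{\phi}$ together with $zEz'$ immediately witnesses $z'\notin\val{\phi}^{\uparrow}$), whereas you route the same computation through the inequality $\mathbf{z}_s\leq\mathbf{z'}_r$ and then unpack it; the two are equivalent reformulations of one argument.
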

\begin{proof}
By definition, $\overline{v}(\phi) = (\val{\phi}, \descr{\phi})\in \mathbb{X}^+$, and hence $z'\succ\phi$ iff $z'\in\descr{\phi} = \val{\phi}^{\downarrow} = \{z'\in Z\mid $ for all $z$, if $z\in \val{\phi}$ then $zE^c z'  \}$. The assumptions imply that some $z\in Z$  exists such that $z\in \val{\phi}$ and $zEz'$.
\end{proof}
From this proposition it follows that weak persistence projects onto persistence in the intuitionistic setting: indeed, as shown in the proof of item 1 of Proposition \ref{prop:transitive forces Heyting}, if $E$ is transitive, then $\val{\phi} = \descr{\phi}^{c}$, i.e.\ $z'\not\succ\phi$ iff $z'\Vdash \phi$ and so, if $z \Vdash \phi$ and $zEz'$ then $z' \Vdash \phi$, in every reflexive and transitive graph-based model.

However, as its name suggests, in the wider context of reflexive but not necessarily transitive graph-based models, weak persistence is strictly weaker than persistence, and hence this notion captures the difference between intuitionistic and non-distributive evaluation from yet another angle. Namely, in intuitionistic frames, any given state is bound to {\em accept} any formula supported by  any of its predecessors, including itself, while in graph-based frames, any given state is only bound to {\em not reject} any formula supported by  any of its predecessors, including itself; as discussed above, because of potential indeterminacy, not rejecting does not necessarily imply accepting.  Hence, together with reflexivity, weak persistence guarantees that, at any given state, any given formula cannot be both accepted and rejected.  

As discussed above, intuitionistic truth can be construed as a procedural truth, so that $z\Vdash \phi$ can be read e.g.~as `at $z$,  some finite procedure has effectively established that $\phi$ is the case'. We also observed that, in intuitionistic frames, the refutation relation $\succ$ coincides with the relation $\not\Vdash$ of {\em non-satisfaction}; that is, under the previous reading of the intuitionistic satisfaction of a proposition,  $z\succ \phi$ iff `at $z$, no finite procedure has effectively established (yet) that $\phi$ is the case'. Non-distributive truth can be then construed along these same lines, but requiring an even {\em higher standard}\footnote{For the sake of the telescopic progression of this presentation, we have discussed a possible interpretation of non-distributive evaluation which allows it to be described as being {\em stricter} than the intuitionistic one. However, other readings might suggest that it is just different: for instance, one can read $z\Vdash \phi$ as `$z$ provides an argument/evidence supporting $\phi$', and $z\succ \phi$ as `$z$ provides an argument/evidence against  $\phi$'. The essential difference lies in the fact that, in the non-distributive setting, $\Vdash$ and $\succ$ determine each other in a way that is defined in terms of $E$, and, in contrast to the intuitionistic setting, cannot be simply reduced to identifying one with the complement of the other. } than intuitionistic truth:  for instance,  to conclude that $\phi$ is refuted at $z$, the {\em absence} of a procedure effectively establishing $\phi$ is not enough;
one needs {\em there to be} a procedure effectively {\em disproving} $\phi$.

Above, we remarked that, when moving from classical to intuitionistic logic, the interpretation  of $\to$ changes and becomes intensional, in the sense that  the truth value of $p\to q$ at a given state $s$ does not just depend on the value of $p$ and $q$ at that state, but also on the truth values of $p$ and $q$ at the $\leq$-successors of $s$.  Similarly, in moving from the intuitionistic to the non-distributive setting, conjunction and disjunction become ``intensional'' in the same sense, i.e.~that the satisfaction of $p\vee q$ (and dually, the refutation of $p\wedge q$) at a given state $s$ does not just depend on the value of $p$ and $q$ at that state, but also on the truth values of $p$ and $q$ at the $E$-successors of that state. 
Indeed, under the assignment above, $\overline{v}(p\vee q) = (uvz, \varnothing)$. The following picture shows how this assignment translates as  satisfaction/refutation of $p\vee q$ at  states of $\mathbb{X}$. 
\begin{center}
\resizebox{.9\textwidth}{!}{
\begin{tikzpicture}
\draw[very thick] (7.5, 2.5) -- (6, 3.5) --
	(6, 4.5) -- (7.5, 5.5) -- (9, 4)  -- (7.5, 2.5);
	\filldraw[black] (7.5,2.5) circle (3 pt);
	\filldraw[black] (6,3.5) circle (3 pt);
	\filldraw[black] (6,4.5) circle (3 pt);
	\filldraw[black] (7.5,5.5) circle (3 pt);
	\filldraw[black] (9,4) circle (3 pt);
	\draw (7.5, 2.2) node {$(\varnothing,uvz)$};
	\draw (5.7, 3.2) node {$(z,uv)$};
	\draw (5.7, 4.8) node {$(vz,u)$};
    \draw (5.5, 3.5) node {{$\overline{v}(p)$}};
	\draw (7.5, 5.8) node {$(uvz,\varnothing)$};
	\draw (7.5, 6.2) node {$\overline{v}(p\vee q)$};
	\draw (9.3, 3.6) node {$(u,z)$};
    \draw (9.1, 4.5) node {$\overline{v}(q)$};
    \draw (4.3,4) node{{\Huge{$\rightsquigarrow$}}};

\draw[very thick] (1, 3.5) -- (0, 4.5) --
	(2, 3.5) -- (1, 4.5);
	\draw[very thick] (0, 3.5) -- (2, 4.5);
	\filldraw[black] (0,3.5) circle (3 pt); 
	\filldraw[black] (0,4.5) circle (3 pt); 
	\filldraw[black] (1,3.5) circle (3 pt); 
	\filldraw[black] (1,4.5) circle (3 pt); 
	\filldraw[black] (2,3.5) circle (3 pt); 
	\filldraw[black] (2,4.5) circle (3 pt); 
    \draw (-0.4,4.5) node {$Z$};
    \draw (-0.4,4) node {$I_{E^c}$};
    \draw (-0.4,3.5) node {$Z$};
	\draw (0,4.8) node {$u$};
	\draw (1,4.8) node {$v$};
	\draw (2,4.8) node {$z$};
	\draw (0,3.2) node {$u$};
	\draw (1,3.2) node {$v$};
	\draw (2,3.2) node {$z$};
    
     \draw (-1.5,4) node{{\Huge{$\rightsquigarrow$}}};
     \filldraw[black] (-5,4) circle (3 pt);
     \draw (-5,3.7) node {$u$};
      \draw (-5, 4.9) node {{$q$}};
	  \draw (-5, 3.3) node {{$p$}};
	\filldraw[black] (-4,4) circle (3 pt);
	 \draw (-4,3.7) node {$v$};
	 \draw (-4, 5.2) node {{$p\vee q$}};
	  \draw (-5, 5.2) node {{$p\vee q$}};
	   \draw (-4, 3.3) node {{$p$}};
	\filldraw[black] (-3,4) circle (3 pt);
	 \draw (-3,3.7) node {$z$};
	 \draw (-3, 4.9) node {{$p$}};
	  \draw (-3, 3.3) node {{$q$}};
	    \draw (-3, 5.2) node {{$p\vee q$}};

	\draw[very thick, ->] (-4.8, 4) -- (-4.2, 4);
	\draw[very thick, ->] (-3.8, 4) -- (-3.2, 4);
	\draw[very thick,  <-] (-5.1, 4.1)  .. controls (-5.7, 4.8) and  (-4.2, 4.8)  .. (-4.9, 4.1); 
\draw[very thick,  <-] (-4.1, 4.1)  .. controls (-4.7, 4.8) and  (-3.2, 4.8)  .. (-3.9, 4.1); 
\draw[very thick,  <-] (-3.1, 4.1)  .. controls (-3.7, 4.8) and  (-2.2, 4.8)  .. (-2.9, 4.1); 
\end{tikzpicture}
}
\end{center} 
In the example above, $p\vee q$ is true at state $v$ although neither $p$ nor $q$ are. More specifically, $p$ is false at $v$ and $q$ is neither true nor false. 
Indeed, the  defining clauses of $\succ$ and $\Vdash$ for $\vee$-formulas (cf.~discussion in Section \ref{ssec: from alg to rel}), repeated below for the reader's convenience,

{\footnotesize
\begin{center}
\begin{tabular}{lllllll}
		$z \succ \phi \vee \psi$ &iff &$z\succ \phi \text{ and } z \succ \psi$ &&
		$z \Vdash \phi \vee \psi$ &iff &for all $z'$, if $zEz'$ then $z'\not\succ\phi\vee\psi$\\
		$z \Vdash \phi \wedge \psi$ &iff &$z\Vdash \phi \text{ and } z \Vdash \psi$ &&
		$z \succ \phi \wedge \psi$ &iff &for all $z'$, if $zEz'$ then $z'\not\Vdash\phi\wedge\psi$\\
\end{tabular}
\end{center}
}
\noindent  can be understood as saying that a $\vee$-formula (resp.~$\wedge$-formula) is refuted (resp.~satisfied) at a state iff both immediate subformulas are refuted (resp.~satisfied) at that state (which is verbatim the intuitionistic and even the {\em classical} refutation condition for $\vee$-formulas and satisfaction condition for $\wedge$-formulas), and that a $\vee$-formula (resp.~$\wedge$-formula) is satisfied (resp.~refuted) at a given state iff none of its successors (including itself)   refutes (resp.~satisfies)  it, i.e.~none of its successors refutes (resp.~satisfies) both its immediate subformulas. The case of disjunction in the example above is then clear-cut:
 there is no state which refutes both $p$ and $q$, and hence the satisfaction condition of $p\vee q$ is vacuously verified by all states of the graph above. However, to mark the difference with the distributive interpretation of disjunction, $p\vee q$ is still true at $v$ even if: (a) the falsehood of $p$ is positively established at $v$; (b) at $v$, the truth of $q$ {\em can} be excluded, given that $v$ does not access any state at which $q$ is true, and (c) at $v$, the falsehood of $q$ {\em cannot} be excluded, given that $v$  accesses a state at which $q$ is false.

Below, we discuss some possible conceptual interpretations of the graph-based semantics which will account for the intensional meaning of $\vee$ in a hyper-constructivist context in which effective procedures are needed both for accepting and for rejecting a proposition. 

\paragraph{Reflexive graphs and the inherent  bounds of knowability.} 
In \cite{graph-based-wollic}, it is argued that, based on their graph-based semantics, LE-logics can be taken as the logics of {\em informational entropy}, where informational entropy is understood as an inherent boundary to knowability due e.g.~to perceptual, theoretical, technological, evidential or linguistic limits. The starting point of this interpretation is to regard reflexive graphs $\mathbb{X} = (Z, E)$ in analogy with approximation spaces (cf.~discussion in penultimate paragraph of Section \ref{ssec:Interpretation:Polarities}), and hence to regard  $E$   as an {\em indiscernibility} 
relation\footnote{In what follows, we  propose that $zEz'$ can be sometimes interpreted as $z$ {\em is consistent with} $z'$, or $z$ {\em does not exclude} $z'$.}. However, while indiscernibility is modelled as an equivalence relation in  the best known settings  in rough set theory and epistemic logic (e.g.~\cite{Pawlak,halpern}),  transitivity fails in concrete instances in which e.g.~states are indiscernible when their distance is closer than a certain threshold, and, as it has been argued in the psychological literature (cf.~\cite{tversky1977features,nosofsky1991stimulus}), symmetry fails in situations in which indiscernibility is defined in terms of a relation of similarity where e.g.~$z$ being similar to $z'$ does not necessarily imply that $z'$ is similar to $z$.  
Hence, under this broader understanding, {\em reflexivity}, i.e.~$\Delta\subseteq E$, is the minimal requirement of an indiscernibility relation\footnote{These objections to modelling indiscernibility as an equivalence relation have also been addressed in rough set theory and epistemic logic, see e.g.~\cite{yao1998interpretations, baltag2016logic}}. The limit case in which $E := \Delta$ is the (classical) case in which there are no boundaries to perfect knowability. So,  the generalization from the classical to the non-distributive propositional environment is captured by the graph-based semantics  as the logical internalization of  the {\em  shift} from $\Delta$ to $E$. 

As discussed above, rather than generating modal operators (e.g.~upper and lower approximations, as in rough set theory), $E$ is used to generate the complete lattice $\mathbb{X}^+$ as the concept lattice of the polarity $\mathbb{P_X} = (Z_A,Z_X,I_{E^c})$,  the elements of which are the closures of subsets of $Z$ (representing states of affairs) under ‘all is knowable about them’, i.e.~the theoretical horizon to knowability, given the inherent boundary encoded into $E$. 

As discussed above about the persistence property of intuitionistic satisfaction, in the non-distributive setting, the requirement of   {\em Galois-stability}  is not only mathematically justified by the need of defining a compositional semantics for $\mathrm{L}$, but can also be understood at a more fundamental level: if $E$ encodes an inherent limit to knowability, this limit should be incorporated in the meaning of formulas which are both satisfied and refuted `up to $E$', i.e.~the semantic representation of each formula  should  not be given in terms of arbitrary subsets of the domain of the graph, but  only in terms of those subsets which are preserved (i.e.~faithfully translated) in the shift from $\Delta$ to $E$; these are exactly the Galois-stable sets. In particular, the closure $a^{\uparrow\downarrow}$ of any $a\in Z$ arises by first considering the set $a^{\uparrow}$ of all the states from which $a$ is not indiscernible, and then the set of all the states that can be told apart from every state in $a^{\uparrow}$. Then  $a$ is clearly an element of $a^{\uparrow\downarrow}$, but other states $b$ in $Z$ might be as well, so this is as far as we can go: $a^{\uparrow\downarrow}$ represents the \emph{horizon}, defined in terms of $E$, to the possibility of completely `knowing' $a$. This horizon might be epistemic, cognitive, technological, or evidential. 
Under this understanding of $E$, the following defining clauses of $\succ$ and $\Vdash$ for $\vee$-formulas (cf.~discussion in Section \ref{ssec: from alg to rel})
{\footnotesize
\begin{center}
\begin{tabular}{lllllll}
		$z \succ \phi \vee \psi$ &iff &$z\succ \phi \text{ and } z \succ \psi$ &&
		$z \Vdash \phi \vee \psi$ &iff &for all $z'$, if $zEz'$ then $z'\not\succ\phi$ or $z'\not\succ\psi$\\
\end{tabular}
\end{center}
}
\noindent  reflect a kind of {\em conservative} or  {\em cautious} interpretation of the meaning of $\vee$: indeed, if states of a reflexive graphs represent e.g.~epistemic situations, or hypothetical scenarios or alternative theories, then a given scenario $s$ refutes a $\vee$-formula iff that scenario (contains enough evidence or explanatory power so that it)  can refute both disjuncts; however, if no scenario that can be accessed from $s$ (contains enough evidence so that it) can refute {\em both disjuncts}, then $s$ must accept the given $\vee$-formula. 

Concrete examples in which this interpretation of $\vee$ is arguably closer to reality than the classical or intuitionistic one arise e.g.~in {\em legal} domains. Consider for instance the Rashomon-type story of a judge who is to establish whether a (female) defendant is to be declared  not guilty 
of the physical injuries sustained by a (male) friend of hers. To simplify matters, the defendant is {\em not guilty} if and only if $p\vee q$, where $p$ stands for `the defendant has not willingly caused harm to her friend' and $q$ stands for `the defendant acted in self-defence'. In the trial, three witnesses testify as follows: Witness A: ``I saw the defendant grabbing a tennis racket and hitting her friend. She looked terrified and didn't utter a sound.'' Witness B: ``I saw the defendant grabbing a tennis racket and hitting her friend. She looked frightened, but not necessarily by her friend.'' Witness C: ``I heard the defendant scream that there was a poisonous spider on her friend's shoulder, so she  killed the spider.'' 
The following graph summarizes the information gathered by the judge:
\begin{center}
\begin{tikzpicture}
  \filldraw[black] (-5,4) circle (3 pt);
     \draw (-5,3.7) node {$z_A$};
      \draw (-5, 4.9) node {{$q$}};
	  \draw (-5, 3.3) node {{$p$}};
	\filldraw[black] (-4,4) circle (3 pt);
	 \draw (-4,3.7) node {$z_B$};
	   \draw (-4, 3.3) node {{$p$}};
	\filldraw[black] (-3,4) circle (3 pt);
	 \draw (-3,3.7) node {$z_C$};
	 \draw (-3, 4.9) node {{$p$}};
	  \draw (-3, 3.3) node {{$q$}};

	\draw[very thick, ->] (-4.8, 4) -- (-4.2, 4);
	\draw[very thick, ->] (-3.8, 4) -- (-3.2, 4);
	\draw[very thick,  <-] (-5.1, 4.1)  .. controls (-5.7, 4.8) and  (-4.2, 4.8)  .. (-4.9, 4.1); 
\draw[very thick,  <-] (-4.1, 4.1)  .. controls (-4.7, 4.8) and  (-3.2, 4.8)  .. (-3.9, 4.1); 
\draw[very thick,  <-] (-3.1, 4.1)  .. controls (-3.7, 4.8) and  (-2.2, 4.8)  .. (-2.9, 4.1); 
\end{tikzpicture}
\end{center}
The states of the graph above represent the different testimonies of the three witnesses; the arrows represent the accessibility relation among states, where  e.g.~$z_A Ez_B$ means `A's testimony  {\em is consistent with} B's testimony', and $z_B Ez_C$ means `B's testimony {\em is consistent with} C's' (notice, however, that $z_B$ is arguably not consistent with $z_A$, since the evidence brought by $z_B$ challenges the {\em conclusions} that can be drawn on the base of the evidence brought by $z_A$, and by the same argument, $z_C$ is not consistent with $z_B$). Finally,  $z_A$ and $z_C$ are mutually inconsistent, since they challenge each other's evidence, and each testimony is consistent with itself. Witness A's testimony offers evidence against $p$ (A saw the defendant hitting her friend with a tennis racket) and evidence supporting  $q$ (she was frightened); Witness B's testimony offers evidence against $p$ (B too saw the defendant hitting her friend with a tennis racket) while not offering any evidence in support of or against $q$ (according to B, she might or might not have been frightened by her friend); Witness C's testimony offers evidence against $q$ and supporting $p$. The judge is not in a position  to establish whether the witnesses are lying or not, or which witness is right; however, the judge's task is to decide on $p\vee q$ on the basis of the available pieces of evidence and how these pieces fit with each other. There is no witness that provides enough evidence to refute both $p$ and $q$, hence, no matter how different their versions are, all testimonies lead to the acceptance of a not guilty verdict. 

\paragraph{LE-logics as epistemic logics of information entropy.} In Section \ref{ssec:Interpretation:Polarities}, the first proposal we discussed of an informal understanding of the modal operators on polarity-based semantics was  {\em epistemic}, and was based on the possibility of taking the incidence relation $I$ of any polarity as {\em objectively}  encoding which objects have which features, while, in contrast, the relations $R_{\Box}$ and $R_{\Diamond}$ were understood as representing the {\em subjective} perspective of e.g.~an agent on the same issues. Likewise, as discussed in \cite{graph-based-wollic}, in the graph-based setting, additional relations on graphs-based frames can be regarded as encoding  {\em subjective indiscernibility}, i.e.~$zR_\Box y$ can be understood as  `$z$ is indiscernible from $y$ {\em according to a given agent}', as opposed to the objective, or inherent indiscernibility encoded by $E$.  Under this interpretation, the following defining clauses of $\succ$ and $\Vdash$ for $\Box$-formulas and $\Diamond$-formulas  (cf.~discussion in Section \ref{ssec:rel interpret modalities}),

{\footnotesize
\begin{flushleft}
\begin{tabular}{lllllll}
		$z \succ \Diamond\phi$ &iff & for all $z'$, if  $z R_{\Diamond}z'$  then $z' \not \Vdash \phi$&&
		$z \Vdash \Diamond\phi$ &iff &for all $z'$, if  $z Ez' $  then $z' \not\succ \Diamond \phi$\\
		$z \Vdash \Box \psi$ &iff &for all $z'$, if  $z R_{\Box}z'  $  then $z' \not\succ \psi$&&
		$z \succ \Box\psi$ &iff &for all $z'$, if  $z' Ez  $  then $z' \not\Vdash \Box \psi$\\
		\end{tabular}
		\end{flushleft}
		}
\noindent  can be understood as saying that
 $\Box\phi$ is satisfied at a given state if no state  that the agent considers indistinguishable from the given one refutes $\phi$, and  $\Diamond\phi$ is refuted at a given state if no state  that the agent considers indistinguishable from the given one accepts $\phi$.
Hence, under the interpretation indicated above, these semantic clauses  support the usual reading of   $\Box\phi$ as `the agent knows/believes $\phi$' and $\Diamond \phi$ as `the agent considers $\phi$ plausible'. 

In Section \ref{ssec:Interpretation:Polarities}, we argued that the epistemic interpretation of $\Box$ in the polarity-based semantics carries to the preservation of the meaning of well known epistemic principles.
Let us now finish this section by  discussing that, also in the setting of graph-based semantics, the epistemic interpretation of modal operators coherently preserves the meaning of the same epistemic principles.
In  \cite[Proposition 4]{graph-based-wollic}), the first-order correspondents of  well known modal axioms from epistemic logic have been computed, which turn out to be the parametrized `$E$-counterparts' of the first-order correspondents on Kripke frames. That is, the factivity axiom $\Box\phi\vdash \phi$, which, on classical Kripke frames corresponds to reflexivity ($\Delta\subseteq R_{\Box}$), corresponds to the first-order condition $E\subseteq R_\Box$ on graph-based frames, which in  \cite{graph-based-wollic} is referred to as $E$-{\em reflexivity}; the omniscience axiom $\phi\vdash \Box\phi$, which, on classical Kripke frames corresponds to the first-order condition $R_{\Box}\subseteq\Delta$, corresponds to its $E$-counterpart $R_\Box \subseteq E$ on graph-based frames; the positive introspection axiom $\Box\phi\vdash \Box\Box\phi$, which, on classical Kripke frames corresponds to transitivity ($R_{\Box}\circ R_{\Box}\subseteq R_{\Box}$), corresponds to the first-order condition $R_\Box\bullet_E R_\Box\subseteq R_\Box$ on graph-based frames
\footnote{For any graph $\mathbb{X} = (Z,E)$ and relations $R, S \subseteq Z \times Z$, the relation $R \bullet_{E} S\subseteq Z\times Z$ is defined as follows: for any $a, x\in Z$,
	\[
	a (R \bullet_{E} S) x \quad \text{iff} \quad \exists y(aR y \; \&\; \forall b(bEy\Rightarrow b S x)). 
	\]

If $E = \Delta$, then $\{b\mid bEy\} =  \{y\}$ for every $y\in Z$, hence   $(R \bullet_{E} S)$ reduces to $R\circ S$. } which in  \cite{graph-based-wollic} is referred to as $E$-{\em transitivity}. So it seems possible to capture the shift from $\Delta$ to $E$ also at the level of the first-order correspondents, by establishing a sort of systematic replacement of the role of $\Delta$ with $E$.\footnote{Mutatis mutandis, the same phenomenon is observable in the polarity-based setting. More about this in the conclusions.} This  similarity in shape is also what guarantees that the intended meaning of the epistemic principles is preserved in the translation from $\Delta$ to $E$. Indeed, as discussed in \cite{graph-based-wollic}, the $E$-reflexivity condition $E\subseteq R_\Box$   requires that if the agent is able to distinguish two given states, then these states are not inherently indistinguishable. That is, the agent's assessments are correct, which preserves the meaning of {\em factivity} modulo informational entropy. Similarly, the condition $R_\Box\subseteq E$ requires the agent to be able to distinguish  any two  states that are not inherently indistinguishable, which is indeed what an {\em omniscient} (but not all-powerful) agent should be able to do. Finally, the condition $R_\Box\bullet_E R_\Box\subseteq R_\Box$, i.e.~$ \forall a\forall x\left[\exists y(aR_\Box y \; \&\; \forall b(bEy\Rightarrow b R_\Box x))\Rightarrow aR_\Box x\right]$ requires that if the agent cannot distinguish  $y$ from  $a$ and  $x$ from any state from which $y$ is inherently indistinguishable, then she cannot distinguish $x$ from $a$. Equivalently, if the agent can distinguish $x$ from $a$, then ($x$ does not belong to the set $Y$ of  states indistinguishable from $a$ according to the agent, and hence) every state $y$ which belongs to $Y$ 
must be inherently indistinguishable from some state $b$ from which she can distinguish $x$. That is, if $x\notin Y$ as above, then for every $y\in Y$ the agent must  be able to find an `$E$-proxy' $b$ of $y$ from which she can tell $x$ apart. Hence,  the first-order correspondent of positive introspection on graphs requires that, if the agent can distinguish $x$ from $a$, there is a witness  $b$  (a `justification') for $x$ to be distinct from any  state that the agent cannot distinguish from $a$. Similarly to the first-order condition corresponding to positive introspection on polarities (cf.~page \pageref{pageref:positive introspection}), this requirement for a justification is indeed what positive introspection is about.
\section{Conclusions and further directions}\label{sec:Conclusions}
The present paper discussed ongoing research directions investigating the mathematical theory and conceptual implications of relational (non-topological)  semantics for non-distributive logics, with a special focus on logics which are algebraically captured by varieties of normal lattice expansions in arbitrary signatures. We showed how, starting from well known adjunctions between (complete) lattices and, respectively, polarities and reflexive graphs, one can derive relational semantics for the language of lattice logic on both these types of structures. We illustrated how these basic semantic frameworks can be systematically expanded to accommodate  expansions of the language with arbitrary normal connectives. The main point we wished to convey with this discussion was methodological: starting from an adjunction between a class of relational structures and lattices, one can uniformly obtain a suitable enrichment of these relational structures to support a systematically derived semantics for LE-logics. This methodology allows one to treat LE-logics uniformly, so that various proposals and solutions which were developed for specific signatures (e.g.~the Routley-Meyer semantics for substructural logics) can be systematically connected and extended to other signatures.

Having obtained these relational semantics, one can try and extract from them insights into the nature of non-distributive logics, much in the way that classical Kripke semantics provides insight into the nature of modal logics by enabling one to reason about possibility and necessity in terms of possible worlds. As we saw, the failure of distributivity is born out by both the polarity-based and graph-based semantics, which offer different perspectives on this phenomenon. 
The polarity-based semantics builds on insights from formal concept analysis, seeing polarities as abstract databases of objects and features, and gives rise to an interpretation of LE-logics where formulas act as names for formal concepts. Under this interpretation, the logical connectives $\wedge$ and $\vee$ come to stand, 
not for  counterparts of the natural language conjunction (`and') and disjunction (`or'), but rather for operations returning, respectively, the greatest common sub-concept and least common super-concept of the concepts named by their arguments. This interpretation allows to intuitively understand  the (satisfaction) relation $\Vdash$  as `being a member' in a category, or `exemplifying' a concept, and the (refutation) relation $\succ$ as `being a defining feature of' the given category or concept.

On the other hand, graph-based semantics evaluates formulas  at states as in Kripke frames, and  
gives rise to an {\em intensional} interpretation of $\wedge$ and $\vee$, where 
successor states need to be taken into account in the evaluation of these connectives. The graph-based semantics supports a reading of $\wedge$ and $\vee$ which can be described as a conservative or cautious version of logical conjunction and disjunction, in a type of {\em hyper-constructivist} logic in which the failure of the law of excluded middle happens at the {\em meta-language} level, in the sense that, at any given state, any given formula can be satisfied, refuted or {\em neither}. 

Informed by such general insights into the nature(s) of non-distributive logics, these semantic paradigms can be used to model and reason about more specific phenomena. Regarding polarities as abstract databases enriched with relations modelling other (e.g.\ agent specific) types of information, we saw how non-distributive logics in specific signatures emerge as modal and epistemic logics of formal concepts. Specializing and varying this interpretation allows us to reason about themes and topics in text databases, syndromes in hospitals, consumer segments within markets, competing scientific theories and interrogative agendas.

The graph-based semantics is suitable for capturing situations where relations of similarity, proximity, indistinguishability or other types of relative informational entropy between states play a role. Such relations encode certain bounds to knowability, inherent in the situation under consideration. These could be bounds on human perception, like the inability to distinguish between the colours of light frequencies which are less the two hertz apart, or bounds on the expressiveness of a natural language imposed by relations of synonymy. In this paper we developed an example where the bounds were evidential, and were encoded by the relation of consistency between a piece of testimony and its conclusions and another piece of testimony. Here the `cautious' non-distributive disjunction functioned naturally in formulas expressing the not guilty status of the defendant. 

In the present paper, we have only scratched the surface of a wide-ranging and presently ongoing research programme. Many developments are currently being investigated  and will be investigated which build on the insights, techniques and methodologies presented in this paper. In the following paragraphs we outline some of the main ones.

\paragraph{Many-valued semantics.} The interpretations of graph-based and polarity-based semantics we have discussed in the present paper generalize very naturally to many-valued versions. Objects in databases may posses features to a certain extent, rather than absolutely: instead of a word being a keyword of a document, we might want to know with what frequency it appears in the document and, as accordingly, a document may be about a topic to a certain extent; members of a selection committee naturally find some issues more important than others and so the emergent interrogative agendas emphasise some issues more than others. Similarly, testimonies may be more or less compatible with one another and may support conclusions or claims to varying extents; there are different proportions of overlap between the variables featuring in databases constructed to test scientific theories, and the may offer support hypothesis to a variable extent. In line with these ideas, many valued versions of polarity-based and graph-based semantics of LE-logics have been proposed (see e.g.\ \cite{roughconcepts, socio-political, eusflat, vague-cat}) and deserve further study and development.

\paragraph{Parametricity phenomena.} We have illustrated how the semantic frameworks we have introduced generalize and project onto the standard relational semantics of classical and intuitionistic (modal) logic. Moreover, we saw how the level in this hierarchy is parametric in---and completely determined by---the properties of the polarity relation $I$ or the graph relation $E$. We have also observed how this parametricity phenomenon extends to the interpretation of modal operators, and particularly in how the (suitably expressed) frame-properties corresponding to well-known modal axioms generalize by simply substituting the appropriate relation in containments and compositions. We conjecture that, in a way that will have to be made suitably precise, this holds for all Sahlqvist and inductive formulas, even when we consider many-valued versions of the semantics. This conjecture receives support from the fact that, in many-valued modal logic, all Sahlqvist formulas have verbatim the same first frame-correspondents, albeit with possibly different many-valued meaning \cite{Britz:MThesis:2016}. This suggests that other constructions and metatheorems, including truth preserving model-constructions and characterizations of expressivity (see \cite{Go17, goldblatt2019morphisms, conradie2018goldblatt}), might also lend themselves to a unified, parametric treatment.    

\paragraph{Vectors space semantics.} Throughout the paper, we have particularly stressed the methodological aspects of our approach, which allow for the systematic introduction of relational semantics for non-distributive logics using some forms of representation results for lattices (or subclasses thereof) obtained via dualities or adjunction results. The polarity-based and graph-based semantics discussed in the present paper are by no means the only structures which lend themselves to the application of this methodology, which can be extended also {\em beyond} non-distributive logics. Another instance of this methodology being applied is \cite{greco2019vector}, where a relational semantic framework based on vector spaces has been introduced for the basic (modal) Lambek calculus. Vector-based models are widely used in various areas of computer science such as  computational linguistics \cite{turney2010frequency}, information retrieval \cite{salton1975vector} and machine learning \cite{kaageback2014extractive}, as outcomes of statistical analyses of large databases. The possibility of regarding these structures as relational models of (non-distributive) propositional logics offers a concrete route for addressing the systematic integration of symbolic and sub-symbolic methods in AI.

\appendix
\section{Appendix}
\subsection{Relational interpretation of additional connectives}\label{appendix:rel:int:addnl:cnctvs}

In this section we provide more detail on how the defining clauses of the relations $\Vdash$ and $\succ$ on models based on polarities and reflexive graphs can be retrieved from the algebraic interpretation in complete  $\mathcal{L}_{\textnormal{LE}}$-algebras for  $\Box$, $\lhd$, $\rhd$ and, in general, for any connective $f\in \mathcal{F}$ of arity $n_f$ and any connective $g\in \mathcal{G}$ of arity $n_g$. 

We start with the case of polarities.  Appealing to Proposition \ref{prop:join and meet generators}, we can write $\overline{v}(\psi) = \bigwedge\{\mathbf{x}\in \mathbb{P}^+\mid \overline{v}(\psi)\leq \mathbf{x}\} = \bigwedge\{\mathbf{x}\in \mathbb{P}^+\mid x \succ \psi\}$. Since by assumption $\overline{v}$ is a homomorphism, $\overline{v}(\Box \psi) = \Box^{\mathbb{P}^+}\overline{v}(\psi) = \Box^{\mathbb{P}^+}(\bigwedge\{\mathbf{x}\in \mathbb{P}^+\mid x\succ \psi\})$, and since $\Box^{\mathbb{P}^+}$ is completely meet-preserving, we have:

\begin{equation}
\label{eq: boxpsi as a bigwedge}
\overline{v}(\Box \psi) = \bigwedge\{\Box^{\mathbb{P}^+}\mathbf{x}\mid x\succ \psi\}.
\end{equation}
%
%
Hence, for any $a\in A$, we have 
\begin{center}
	\begin{tabular}{r c l l}
		$a\Vdash \Box\psi$& iff &$\mathbf{a} \leq \overline{v}(\Box\psi)$ & definition of $\Vdash$ as in \eqref{eq: desiderata satisfaction-polarity}\\
		& iff & $\mathbf{a} \leq \bigwedge\{\Box^{\mathbb{P}^+}\mathbf{x}\mid x\succ \psi\}$ & \eqref{eq: boxpsi as a bigwedge}\\
		& iff & for all $x\in X$, if $x\succ \psi$ then $\mathbf{a} \leq \Box^{\mathbb{P}^+}\mathbf{x}$ & definition of $\bigwedge$\\
		& iff & for all $x\in X$, if $x\succ \psi$ then $a R_{\Box} x$, & \eqref{eq:def rbox} 
	\end{tabular}
\end{center}
where we have {\em defined} the accessibility relation $R_\Box\subseteq A\times X$ corresponding to the interpretation of $\Box$ as $\Box^{\mathbb{P}^+}$ as follows:
\begin{equation}\label{eq:def rbox} a R_\Box x \quad\mbox{ iff }\quad \mathbf{a} \leq \Box^{\mathbb{P}^+}\mathbf{x}. \end{equation}
Now, using that the set of elements $\mathbf{a}$ for $a\in A$ are join-generators of $\mathbb{P}^+$, we obtain $\succ$-clause for $\Box$:
{\footnotesize
\begin{center}
	\begin{tabular}{r c l l}
		$x\succ \Box\psi$ &iff & $\overline{v}(\Box\psi) \leq \mathbf{x}$ & definition of $\succ$ as in \eqref{eq: desiderata co-satisfaction-polarity}\\
		& iff & $\bigvee\{\mathbf{a}\mid \mathbf{a} \leq \overline{v}(\Box\psi)\} \leq \mathbf{x}$ & $\mathbf{a}$ for $a\in A$ join-generators of $\mathbb{P}^+$\\
		& iff & for all $a\in A$, if $\mathbf{a} \leq \overline{v}(\Box\psi)$ then $ \mathbf{a}\leq \mathbf{x}$& definition of $\bigvee$\\
		& iff & for all $a\in A$, if $a\Vdash \Box\psi$ then $ a Ix$ &definition of $\Vdash$ as in \eqref{eq: desiderata satisfaction-polarity}.\\
	\end{tabular}
\end{center}
}
Next, we consider $\lhd$. Since $\overline{v}$ is a homomorphism, $\overline{v}(\lhd \psi) = \lhd^{\mathbb{P}^+}\overline{v}(\psi) = \lhd^{\mathbb{P}^+}(\bigwedge\{\mathbf{x}\in \mathbb{P}^+\mid x\succ \psi\})$, and since $\lhd^{\mathbb{P}^+}$ is completely meet-reversing, we have:

\begin{equation}
\label{eq: lhdpsi as a bigvee}
\overline{v}(\lhd \psi) = \bigvee\{\lhd^{\mathbb{P}^+}\mathbf{x}\mid x\succ \psi\}.
\end{equation}
%
%
Hence, for any $x\in X$, we have 
\begin{center}
	\begin{tabular}{r c l l}
		$x\succ \lhd\psi$& iff &$ \overline{v}(\lhd\psi) \leq \mathbf{x}$ & definition of $\succ$ as in \eqref{eq: desiderata co-satisfaction-polarity}\\
		& iff & $\bigvee\{\lhd^{\mathbb{P}^+}\mathbf{x}'\mid x'\succ \psi\}\leq \mathbf{x}$ & \eqref{eq: lhdpsi as a bigvee}\\
		& iff & for all $x'\in X$, if $x'\succ \psi$ then $\lhd^{\mathbb{P}^+}\mathbf{x}'\leq \mathbf{x}$ & definition of $\bigvee$\\
		& iff & for all $x'\in X$, if $x'\succ \psi$ then $x R_{\lhd} x'$, & \eqref{eq:def rlhd} 
	\end{tabular}
\end{center}
where we have {\em defined} the accessibility relation $R_\lhd\subseteq X\times X$ corresponding to the interpretation of $\lhd$ as $\lhd^{\mathbb{P}^+}$ as follows:
\begin{equation}\label{eq:def rlhd} x R_\lhd x' \quad\mbox{ iff }\quad \lhd^{\mathbb{P}^+}\mathbf{x}'\leq \mathbf{x}. \end{equation}
As the set of elements $\mathbf{x}$ for $x\in X$ are meet-generators of $\mathbb{P}^+$, we obtain the following equivalences:
\begin{center}
	\begin{tabular}{r c l l}
		&&$a\Vdash \lhd\psi$\\
		&iff & $\mathbf{a} \leq \overline{v}(\lhd\psi)$ & definition of $\Vdash$ as in \eqref{eq: desiderata satisfaction-polarity}\\
		& iff & $\mathbf{a} \leq \bigwedge \{\mathbf{x}\mid \overline{v}(\lhd\psi) \leq \mathbf{x}\}$ & $\mathbf{x}$ for $x\in X$ meet-generators of $\mathbb{P}^+$\\
		& iff & for all $x\in X$, if $\overline{v}(\lhd\psi) \leq \mathbf{x}$ then $ \mathbf{a}\leq \mathbf{x}$& definition of $\bigwedge$\\
		& iff & for all $x\in X$, if $x\succ \lhd\psi$ then $ a Ix$ &definition of $\succ$ as in \eqref{eq: desiderata co-satisfaction-polarity}.\\
	\end{tabular}
\end{center}
Next, we consider $\rhd$. Since $\overline{v}$ is a homomorphism, $\overline{v}(\rhd \psi) = \rhd^{\mathbb{P}^+}\overline{v}(\psi) = \rhd^{\mathbb{P}^+}(\bigvee\{\mathbf{a}\in \mathbb{P}^+\mid a\Vdash \psi\})$, and since $\rhd^{\mathbb{P}^+}$ is completely join-reversing, we have:

\begin{equation}
\label{eq: rhdpsi as a bigwedge}
\overline{v}(\rhd \psi) = \bigwedge\{\rhd^{\mathbb{P}^+}\mathbf{a}\mid a\Vdash \psi\}.
\end{equation}
%
%
Hence, for any $a\in A$, we obtain the following:
\begin{center}
	\begin{tabular}{r c l l}
		$a\Vdash \rhd\psi$& iff &$ \mathbf{a}\leq \overline{v}(\rhd\psi)$ & definition of $\Vdash$ as in \eqref{eq: desiderata satisfaction-polarity}\\
		& iff & $\mathbf{a}\leq \bigwedge\{\rhd^{\mathbb{P}^+}\mathbf{a}'\mid a'\Vdash \psi\}$ & \eqref{eq: rhdpsi as a bigwedge}\\
		& iff & for all $a'\in A$, if $a'\Vdash \psi$ then $\mathbf{a} \leq \rhd^{\mathbb{P}^+}\mathbf{a}'$ & definition of $\bigwedge$\\
		& iff & for all $a'\in A$, if $a'\Vdash \psi$ then $a R_{\rhd} a'$, & \eqref{eq:def rrhd} 
	\end{tabular}
\end{center}
where we have {\em defined} the accessibility relation $R_\rhd\subseteq A\times A$ corresponding to the interpretation of $\rhd$ as $\rhd^{\mathbb{P}^+}$ as follows:
\begin{equation}\label{eq:def rrhd} a R_\rhd a' \quad\mbox{ iff }\quad \mathbf{a}\leq \rhd^{\mathbb{P}^+}\mathbf{a}'. \end{equation}
As the set of elements $\mathbf{a}$ for $a\in A$ are join-generators of $\mathbb{P}^+$, we obtain the following equivalences:
\begin{center}
	\begin{tabular}{r c l l}
		&&$x\succ \rhd\psi$\\
		&iff & $\overline{v}(\rhd\psi)\leq \mathbf{x}$ & definition of $\succ$ as in \eqref{eq: desiderata co-satisfaction-polarity}\\
		& iff & $\bigvee \{\mathbf{a}\mid \mathbf{a} \leq \overline{v}(\rhd\psi)\}\leq \mathbf{x}$ & $\mathbf{a}$ for $a\in A$ join-generators of $\mathbb{P}^+$\\
		& iff & for all $a\in A$, if $\mathbf{a} \leq \overline{v}(\rhd\psi)$ then $ \mathbf{a}\leq \mathbf{x}$& definition of $\bigvee$\\
		& iff & for all $a\in A$, if $a\Vdash \rhd\psi$ then $ a Ix$ &definition of $\Vdash$ as in \eqref{eq: desiderata satisfaction-polarity}.\\
	\end{tabular}
\end{center}
More generally, if $f$ is monotone in its $i$th coordinate then, by appealing to Proposition \ref{prop:join and meet generators}, we write $\overline{v}(\varphi_i) = \bigvee\{\mathbf{a}\in \mathbb{P}^+\mid \mathbf{a}\leq \overline{v}(\varphi_i)\} = \bigvee\{\mathbf{a}\in \mathbb{P}^+\mid a\Vdash \varphi_i\}$, and if $f$ is antitone in its $i$-th coordinate then we write  $\overline{v}(\varphi_i) = \bigwedge\{\mathbf{x}\in \mathbb{P}^+\mid \overline{v}(\varphi_i)\leq \mathbf{x}\} = \bigwedge\{\mathbf{x}\in \mathbb{P}^+\mid x\succ \varphi_i\}$. Since by assumption $\overline{v}$ is a homomorphism, $\overline{v}(f(\overline{\varphi})) = f^{\mathbb{P}^+}(\overline{\overline{v}(\varphi)})$, and so, since $f^{\mathbb{P}^+}$ preserves arbitrary joins in each monotone coordinate and reverses arbitrary meets in each antitone coordinate, we get 
\begin{equation}
\label{eq: fvarphi as a bigvee}
\overline{v}(f(\overline{\varphi})) = \bigvee\{f^{\mathbb{P}^+}\left(\overline{\mathbf{a}}^{(n_f)}\right)\mid \overline{a}^{(n_f)}\Vdash^{(n_f)} \overline{\varphi}\},\end{equation}
where $\overline{\mathbf{a}}^{(n_f)}$ is $\mathbf{a}$  and  $\Vdash^{(n_f)} $ is $\Vdash$ if $f$ is monotone in its $i$-th coordinate, whereas  $\overline{\mathbf{a}}^{(n_f)}$ is $\mathbf{x}$  and $\Vdash^{(n_f)} $ is $\succ$ if $f$ is antitone in its $i$-th coordinate. 
Hence, for any $x \in X$, we obtain the following:
{\footnotesize
\begin{center}
	\begin{tabular}{r c l l}
		$x\succ f(\overline{\varphi})$& iff &$\overline{v}(f(\overline{\varphi}))\leq \mathbf{x}$ & definition of $\succ$ as in \eqref{eq: desiderata co-satisfaction-polarity}\\
		& iff & $\bigvee\{f^{\mathbb{P}^+}\left(\overline{\mathbf{a}}^{(n_f)}\right)\mid \overline{a}^{(n_f)}\Vdash^{(n_f)} \varphi\}\leq \mathbf{x}$ & \eqref{eq: fvarphi as a bigvee}\\
		& iff & for all $\overline{a}^{(n_f)}\in \overline{A}^{(n_f)}$, if $\overline{a}^{(n_f)}\Vdash \overline{\varphi}$ then $f^{\mathbb{P}^+}(\overline{\mathbf{a}}^{(n_f)})\leq \mathbf{x}$ & definition of $\bigvee$\\
		& iff & for all $\overline{a}^{(n_f)}\in \overline{A}^{(n_f)}$, if $\overline{a}^{(n_f)}\Vdash^{(n_f)} \overline{\varphi}$ then $R_{f}(x, \overline{a}^{(n_f)})$ & \eqref{eq:def rf}. 
	\end{tabular}
\end{center}
}
Here we have {\em defined} the accessibility relation $R_f\subseteq X\times \overline{A}^{(n_f)}$, where $\overline{A}^{(n_f)}$ denotes the $n_f$-fold cartesian product of $A$ and $X$ such that for each $1\leq i\leq n_f$ if $f$ is monotone in its $i$-th coordinate then the $i$-th coordinate of $\overline{A}^{(n_f)}$ is $A$, whereas if $f$ is antitone in its $i$-th coordinate then the $i$-th coordinate of $\overline{A}^{(n_f)}$ is $X$, corresponding to the interpretation of $f$ as $f^{\mathbb{P}^+}$ as follows:
\begin{equation}\label{eq:def rf} R_f (x, \overline{a}^{(n_f)}) \; \mbox{ iff }\;  f^{\mathbb{P}^+}(\overline{\mathbf{a}}^{(n_f)})\leq \mathbf{x}. \end{equation}
Now, using the fact that the set of elements $\mathbf{x}$ for $x\in X$ are meet-generators of $\mathbb{P}^+$, we can write:
\begin{center}
	\begin{tabular}{r c l l}
		&&$a\Vdash f(\overline{\varphi})$\\
		&iff & $\mathbf{a}\leq \overline{v}(f(\overline{\varphi}))$ & definition of $\Vdash$ as in \eqref{eq: desiderata satisfaction-polarity}\\
		& iff & $\mathbf{a}\leq \bigwedge\{\mathbf{x}\mid  \overline{v}(f(\overline{\varphi}))\leq \mathbf{x}\}$ & $\mathbf{x}$ for $x\in X$ meet-generators of $\mathbb{P}^+$\\
		& iff & for all $x\in X$, if $\overline{v}(f(\overline{\varphi}))\leq \mathbf{x}$ then $ \mathbf{a}\leq \mathbf{x}$& definition of $\bigwedge$\\
		& iff & for all $x\in X$, if $x\succ f(\overline{\varphi})$ then $ a Ix$ &definition of $\succ$ as in \eqref{eq: desiderata co-satisfaction-polarity}.\\
	\end{tabular}
\end{center}

If $g$ is monotone in its $i$-th coordinate then, by Proposition \ref{prop:join and meet generators}, we can write $\overline{v}(\varphi_i) = \bigwedge\{\mathbf{x}\in \mathbb{P}^+\mid \overline{v}(\varphi_i) \leq \mathbf{x}\} = \bigwedge\{\mathbf{x}\in \mathbb{P}^+\mid x\succ \varphi_i\}$, and if $f$ is antitone in its $i$-th coordinate then we write  $\overline{v}(\varphi_i) = \bigvee\{\mathbf{a}\in \mathbb{P}^+\mid \mathbf{a}\leq \overline{v}(\varphi_i)\} = \bigvee\{\mathbf{a}\in \mathbb{P}^+\mid a\Vdash \varphi_i\}$. Now, since $\overline{v}$ is a homomorphism, $\overline{v}(g(\overline{\varphi})) = g^{\mathbb{P}^+}(\overline{\overline{v}(\varphi)})$, and  since $g^{\mathbb{P}^+}$ preserves arbitrary meets in each monotone coordinate and reverses arbitrary joins in each antitone coordinate, we get: 
\begin{equation}
\label{eq: gvarphi as a bigwedge}
\overline{v}(g(\overline{\varphi})) = \bigwedge\{g^{\mathbb{P}^+}\left(\overline{\mathbf{x}}^{(n_g)}\right)\mid \overline{x}^{(n_g)}\succ^{(n_g)} \overline{\varphi}\},\end{equation}
where $\overline{\mathbf{x}}^{(n_g)}$ is $\mathbf{x}$  and  $\succ^{(n_g)} $ is $\succ$ if $g$ is monotone in its $i$-th coordinate, whereas  $\overline{\mathbf{x}}^{(n_g)}$ is $\mathbf{a}$  and $\succ^{(n_g)} $ is $\Vdash$ if $g$ is antitone in its $i$-th coordinate. 
Thus, for any $a \in A$, we obtain the following equivalences:
{\footnotesize
\begin{center}
	\begin{tabular}{r c l l}
		$a\Vdash g(\overline{\varphi})$& iff &$\mathbf{a} \leq \overline{v}(f(\overline{\varphi}))$ & definition of $\Vdash$ as in \eqref{eq: desiderata satisfaction-polarity}\\
		& iff & $\mathbf{a} \leq \bigwedge\{g^{\mathbb{P}^+}\left(\overline{\mathbf{x}}^{(n_g)}\right)\mid \overline{x}^{(n_g)}\succ^{(n_g)} \varphi\}$ & \eqref{eq: gvarphi as a bigwedge}\\
		& iff & for all $\overline{x}^{(n_g)}\in \overline{X}^{(n_g)}$, if $\overline{x}^{(n_g)}\succ \overline{\varphi}$ then $\mathbf{a} \leq g^{\mathbb{P}^+}(\overline{\mathbf{x}}^{(n_g)})$ & definition of $\bigwedge$\\
		& iff & for all $\overline{x}^{(n_g)}\in \overline{X}^{(n_g)}$, if $\overline{x}^{(n_g)}\succ^{(n_g)} \overline{\varphi}$ then $R_{g}(a, \overline{x}^{(n_g)})$ & \eqref{eq:def rg}. 
	\end{tabular}
\end{center}
}
Here we have {\em defined} the accessibility relation $R_g\subseteq A\times \overline{X}^{(n_g)}$, where $\overline{X}^{(n_g)}$ denotes the $n_g$-fold cartesian product of $A$ and $X$ such that for each $1\leq i\leq n_g$ if $g$ is monotone in its $i$-th coordinate then the $i$-th coordinate of $\overline{X}^{(n_g)}$ is $X$, whereas if $g$ is antitone in its $i$-th coordinate then the $i$-th coordinate of $\overline{X}^{(n_g)}$ is $A$, corresponding to the interpretation of $g$ as $g^{\mathbb{P}^+}$ as follows:
\begin{equation}\label{eq:def rg} R_g (a, \overline{x}^{(n_g)}) \; \mbox{ iff }\;  \mathbf{a} \leq g^{\mathbb{P}^+}(\overline{\mathbf{x}}^{(n_g)}). \end{equation}
Since the set of elements $\mathbf{a}$ for $a\in A$ are join-generators of $\mathbb{P}^+$, we can write:
\begin{center}
	\begin{tabular}{r c l l}
		&&$x\succ g(\overline{\varphi})$\\
		&iff & $\overline{v}(g(\overline{\varphi})) \leq \mathbf{x}$ & definition of $\succ$ as in \eqref{eq: desiderata co-satisfaction-polarity}\\
		& iff & $\bigvee\{\mathbf{a}\mid  \mathbf{a} \leq \overline{v}(g(\overline{\varphi}))\}\leq \mathbf{x}$ & $\mathbf{a}$ for $a\in A$ join-generators of $\mathbb{P}^+$\\
		& iff & for all $a\in A$, if $\mathbf{a} \leq \overline{v}(g(\overline{\varphi}))$ then $ \mathbf{a}\leq \mathbf{x}$& definition of $\bigvee$\\
		& iff & for all $a\in A$, if $a\Vdash g(\overline{\varphi})$ then $ a Ix$ &definition of $\Vdash$ as in \eqref{eq: desiderata satisfaction-polarity}.\\
	\end{tabular}
\end{center}


Finally, we show how the defining clauses of the relations $\Vdash$ and $\succ$ on models based on graphs can be retrieved from the algebraic interpretation in complete  $\mathcal{L}_{\textnormal{LE}}$-algebras for $\Diamond$ and $\Box$. The clauses for $\lhd$, $\rhd$, $f$ and $g$  follow
by similar arguments. 

We first treat $\Diamond$. Analogously to the way we argued above, by appealing to Proposition \ref{prop:join and meet generators}, we can write $\overline{v}(\psi) = \bigvee\{\mathbf{z}_s\in \mathbb{X}^+\mid \mathbf{z}_s\leq \overline{v}(\psi)\} = \bigvee\{\mathbf{z}_s\in \mathbb{X}^+\mid z\Vdash \psi\}$. Since by assumption $\overline{v}$ is a homomorphism, $\overline{v}(\Diamond \psi) = \Diamond^{\mathbb{X}^+}\overline{v}(\psi) = \Diamond^{\mathbb{X}^+}(\bigvee\{\mathbf{z}_s\in \mathbb{X}^+\mid z\Vdash \psi\})$, and since $\Diamond^{\mathbb{X}^+}$ is completely join-preserving, we obtain:

\begin{equation}
\label{eq: graph: diamondpsi as a bigvee }
\overline{v}(\Diamond \psi) = \bigvee\{\Diamond^{\mathbb{X}^+}\mathbf{z}_s\mid z\Vdash \psi\}.\end{equation}

\begin{center}
	\begin{tabular}{r c l l}
		$x\succ \Diamond\psi$& iff &$\overline{v}(\Diamond\psi)\leq \mathbf{z}_r$ & definition of $\succ$ as in \eqref{eq: desiderata co-satisfaction-graph}\\
		& iff & $\bigvee\{\Diamond^{\mathbb{X}^+}\mathbf{z}_s\mid z\Vdash \psi\}\leq \mathbf{z}_r$ & \eqref{eq: graph: diamondpsi as a bigvee }\\
		& iff & for all $z'\in Z$, if $z'\Vdash \psi$ then $\Diamond^{\mathbb{X}^+}\mathbf{z}_s\leq \mathbf{z}_r$ & definition of $\bigvee$\\
		& iff & for all $z'\in Z$, if $\Diamond^{\mathbb{X}^+}\mathbf{z}_s\nleq \mathbf{z}_r$ then $z'\nVdash \psi$ & contraposition\\
		& iff & for all $z'\in Z$, if  $z R_{\Diamond} z'$ then $z'\nVdash \psi$, & \eqref{eq:def rcdiamond} 
	\end{tabular}
\end{center}
where we have {\em defined} the accessibility relation $R_\Diamond$ corresponding to the interpretation of $\Diamond$ as $\Diamond^{\mathbb{X}^+}$ as follows:
\begin{equation}\label{eq:def rcdiamond} z R_\Diamond z' \quad\mbox{ iff }\quad \Diamond^{\mathbb{X}^+}\mathbf{z}_z\nleq \mathbf{z}_r. \end{equation}
{\footnotesize
\begin{center}
	\begin{tabular}{r c l l}
		&&$z\Vdash \Diamond\psi$\\
		&iff & $\mathbf{z}_s\leq \overline{v}(\Diamond\psi)$ & definition of $\Vdash$ as in \eqref{eq: desiderata satisfaction-graph}\\
		& iff & $\mathbf{z}_s\leq \bigwedge\{\mathbf{z}_r\mid  \overline{v}(\Diamond\psi)\leq \mathbf{z}_r\}$ & $\mathbf{z}_r$ for $z\in Z$ meet-generators of $\mathbb{X}^+$\\
		& iff & for all $z'\in Z$, if $\overline{v}(\Diamond\psi)\leq \mathbf{z}_r$ then $ \mathbf{z}_s\leq \mathbf{z}_r$& definition of $\bigwedge$\\
		& iff & for all $z'\in Z$, if $z'\succ \Diamond\psi$ then $ z I_{E^c}z'$ &definition of $\succ$ as in \eqref{eq: desiderata co-satisfaction-graph}.\\
		& iff & for all $z'\in Z$, if $z'\succ \Diamond\psi$ then $(z, z')\notin E$ &definition of $I_{E^c}$ \\
		& iff & for all $z'\in Z$, if $zEz'$ then $z'\nsucc \Diamond\psi$ & contraposition.  \\
	\end{tabular}
\end{center}
}

Since by assumption $\overline{v}$ is a homomorphism, $\overline{v}(\Box \psi) = \Box^{\mathbb{X}^+}\overline{v}(\psi) = \Box^{\mathbb{X}^+}(\bigwedge\{\mathbf{z}_r\in \mathbb{X}^+\mid z\succ \psi\})$, and since $\Box^{\mathbb{X}^+}$ is completely meet-preserving, we have:

\begin{equation}
\label{eq: graph: boxpsi as a bigwedge}
\overline{v}(\Box \psi) = \bigwedge\{\Box^{\mathbb{X}^+}\mathbf{z}_r\mid z\succ \psi\}.
\end{equation}
Hence, for any $z\in Z$, we have 
\begin{center}
	\begin{tabular}{r c l l}
		$z\Vdash \Box\psi$& iff &$\mathbf{z}_s \leq \overline{v}(\Box\psi)$ & definition of $\Vdash$ as in \eqref{eq: desiderata satisfaction-graph}\\
		& iff & $\mathbf{z}_s \leq \bigwedge\{\Box^{\mathbb{X}^+}\mathbf{z}_r\mid z\succ \psi\}$ & \eqref{eq: graph: boxpsi as a bigwedge}\\
		& iff & for all $z'\in Z$, if $z'\succ \psi$ then $\mathbf{z_s} \leq \Box^{\mathbb{X}^+}\mathbf{z}_r$ & definition of $\bigwedge$\\
		& iff & for all $z'\in Z$, if $\mathbf{z_s} \nleq \Box^{\mathbb{X}^+}\mathbf{z}_r$ then $z'\nsucc \psi$ & contraposition\\
		& iff & for all $z'\in Z$, if $zR_\Box z'$ then $z'\nsucc \psi$ & \eqref{eq:def rcbox}\\
	\end{tabular}
\end{center}
where we have {\em defined} the accessibility relation $R_\Box$ corresponding to the interpretation of $\Box$ as $\Box^{\mathbb{X}^+}$ as follows:
\begin{equation}\label{eq:def rcbox} z R_\Box z' \quad\mbox{ iff }\quad \mathbf{z}_s \nleq \Box^{\mathbb{X}^+}\mathbf{z}_r. \end{equation}
\begin{center}
	\begin{tabular}{r c l l}
		&&$z\succ \Box\psi$\\
		&iff & $\overline{v}(\Box\psi) \leq \mathbf{z}_r$ & definition of $\succ$ as in \eqref{eq: desiderata co-satisfaction-graph}\\
		& iff & $\bigvee\{\mathbf{z}_s\mid \mathbf{z}_s \leq \overline{v}(\Box\psi)\} \leq \mathbf{z}_r$ & $\mathbf{z}_s$ for $z\in Z$ join-generators of $\mathbb{X}^+$\\
		& iff & for all $z'\in Z$, if $\mathbf{z}_s \leq \overline{v}(\Box\psi)$ then $ \mathbf{z}_s\leq \mathbf{z}_r$& definition of $\bigvee$\\
		& iff & for all $z'\in Z$, if $z'\Vdash \Box\psi$ then $ z'I_{E^c}z$ &definition of $\Vdash$ as in \eqref{eq: desiderata satisfaction-graph}.\\
		& iff & for all $z'\in Z$, if $z'\Vdash \Box\psi$ then $ (z', z)\notin E$ &definition of $I_{E^c}$ \\
		& iff & for all $z'\in Z$, if $z'Ez$ then $z'\nVdash \Box\psi$ & contraposition.  \\
		
	\end{tabular}
\end{center}



\subsection{Polarity-based frames}\label{apendix:pol:based}
In this subsection we give 
the definition of a polarity-based frame, as well as 
a suitable expanded adjunction based on the adjunction between complete lattices and polarities described in Section \ref{ssec:polarities graphs}. We will focus mainly on the treatment of the additional connectives. 

Before we give the definition of a polarity-based frame, we introduce some notation. For any sets $A, B$ and any relation $S \subseteq A \times B$, we let, for any $A' \subseteq A$ and $B' \subseteq B$,
$$S^{(1)}[A'] := \{b \in B\mid  \forall a(a \in A' \Rightarrow a Sb ) \} \,\, \mathrm{and}\,\, S^{(0)}[B'] := \{a \in A \mid \forall b(b \in B' \Rightarrow a S b)  \}.$$
For all sets $A, B_1,\ldots B_n,$ and any relation $S \subseteq A \times B_1\times \cdots\times B_n$, for any $\overline{C}: = (C_1,\ldots, C_n)$ where $C_i\subseteq B_i$ and $1\leq i\leq n$ 
we let, for all $A'$,
\begin{equation*}\label{eq:notation bari}
\overline{C}^{\,i}:  = (C_1,\ldots,C_{i-1}, C_{i+1},\ldots, C_n)
\end{equation*}
\begin{equation*}\label{eq:notation baridown}
\overline{C}^{\,i}_{A'}: = (C_1\ldots,C_{i-1}, A', C_{i+1},\ldots, C_n)
\end{equation*}
When $C_i: = \{c_i\}$ and $A': =\{a'\} $, we will write $\overline{c}$ for $\overline{\{c\}}$,  and $\overline{c}^{\,i}$ for $\overline{\{c\}}^{\,i}$, and $\overline{c}^{\,i}_{a'}$ for $\overline{\{c\}}^{\,i}_{\{a'\}}$.
We also let:
\begin{enumerate}
	\item $S^{(0)}[\overline{C}] := \{a \in A\mid  \forall \overline{b}(\overline{b}\in \overline{C} \Rightarrow aS \overline{b} ) \}$.
	
	\item $S_i \subseteq B_i \times B_1 \times \cdots \times B_{i-1} \times A \times B_{i + 1} \times \cdots\times B_n$ be defined by
	\[(b_i, \overline{c}_{a}^{\, i})\in S_i \mbox{ iff } (a,\overline{c})\in S.\]
	
	\item $S^{(i)}[A', \overline{C}^{\,i}] := S_i^{(0)}[\overline{C}^{\, i}_{A'}]$.
\end{enumerate}

\begin{definition}\label{def:polarity:based:frm}
	A {\em polarity-based frame} for $\mathcal{L}_{\text{LE}}$ is a tuple $\mathbb{F} = (\mathbb{P}, \mathcal{R}_{\mathcal{F}}, \mathcal{R}_{\mathcal{G}})$, where  $\mathbb{P} = (A, X,  I)$ is a polarity, $\mathcal{R}_{\mathcal{F}} = \{R_f\mid f\in \mathcal{F}\}$, and $\mathcal{R}_{\mathcal{G}} = \{R_g\mid g\in \mathcal{G}\}$, such that  for each $f\in \mathcal{F}$ and $g\in \mathcal{G}$, the symbols $R_f$ and  $R_g$ respectively denote $(n_f+1)$-ary and $(n_g+1)$-ary relations on $\mathbb{P}$,
	\begin{equation*}
	R_f \subseteq X \times \overline{A}^{(n_f)}    \ \mbox{ and }\ R_g \subseteq A \times \overline{X}^{(n_g)},
	\end{equation*}
	where $\overline{A}^{(n_f)}$ denotes the $n_f$-fold cartesian product of $A$ and $X$ such that for each $1\leq i\leq n_f$ if $f$ is monotone in its $i$-th coordinate then the $i$th coordinate of $\overline{A}^{(n_f)}$ is $A$, whereas if $f$ is antitone in its $i$-th coordinate then the $i$th coordinate of $\overline{A}^{(n_f)}$ is $X$, and $\overline{X}^{(n_g)}$ denotes the $n_g$-fold cartesian product of $A$ and $X$ such that for each $1\leq i\leq n_g$ if $g$ is monotone in its $i$-th coordinate then the $i$-th coordinate of $\overline{X}^{(n_g)}$ is $X$, whereas if $g$ is antitone in its $i$-th coordinate then the $i$-th coordinate of $\overline{X}^{(n_g)}$ is $A$.
	In addition, all relations $R_f$ and $R_g$ are required to be {\em compatible}, i.e.\ the following sets are assumed to be  Galois-stable for all $a \in A$, $x \in X $, $\overline{a} \in A^{(n_f)}$, and $\overline{x} \in X^{(n_g)}$:
	\[
	R_f^{(0)}[\overline{a}]\text{ and }R_f^{(i)}[x, \overline{a}^{\, i}] \quad\quad R_g^{(0)}[\overline{x}]\text{ and }R_g^{(i)}[a, \overline{x}^{\,i}].
	\]
\end{definition}

\begin{definition}
	For a complete lattice expansion $\mathbb{A} = (\mathbb{L}, \mathcal{F}^{\mathbb{L}}, \mathcal{G}^{\mathbb{L}})$, the  $\mathcal{L}_{\text{LE}}$-frame associated with $\mathbb{A}$ is the structure
	$\mathbb{F}_\mathbb{A} = 
	(\mathbb{P}_\mathbb{L}, {R}_\mathcal{F}, \mathcal{R}_\mathcal{G})$, where $\mathbb{P}_\mathbb{L}$ is the polarity $(L, L, \leq)$, $\mathcal{R}_{\mathcal{F}} = \{R_f\mid f\in \mathcal{F}\}$, and $\mathcal{R}_{\mathcal{G}} = \{R_g\mid g\in \mathcal{G}\}$, such that  for each $f\in \mathcal{F}$ and $g\in \mathcal{G}$, the relations $R_f$ and  $R_g$ are defined as follows:
	\begin{equation*} R_f (x, \overline{a}^{(n_f)}) \; \mbox{ iff }\;  f^{\mathbb{P}_{\mathbb{L}}^+}(\overline{\mathbf{a}}^{(n_f)})\leq \mathbf{x} \quad\quad R_g (a, \overline{x}^{(n_g)}) \; \mbox{ iff }\;   \mathbf{a}\leq g^{\mathbb{P}^+_\mathbb{L}}(\overline{\mathbf{x}}^{(n_g)}). \end{equation*}
\end{definition}

\begin{prop}
	For a complete lattice expansion $\mathbb{A} = (\mathbb{L}, \mathcal{F}^{\mathbb{L}}, \mathcal{G}^{\mathbb{L}})$, the associated $\mathcal{L}_{\text{LE}}$-frame $\mathbb{F}_\mathbb{A}$ is a polarity-based frame. 
\end{prop}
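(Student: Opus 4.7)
The plan is to unpack the compatibility requirement and reduce it to statements about principal up/down-sets in $\mathbb{L}$. Since the polarity underlying $\mathbb{F}_\mathbb{A}$ is $\mathbb{P}_\mathbb{L} = (L, L, \leq)$, I would first observe, using the computation in the proof of Birkhoff's representation theorem above, that for any $Y \subseteq L$ one has $Y^\uparrow = (\bigvee Y){\uparrow}$ and $Y^\downarrow = (\bigwedge Y){\downarrow}$. Therefore the Galois-stable subsets of the $A$-side are precisely the principal downsets $a{\downarrow}$, and the Galois-stable subsets of the $X$-side are precisely the principal upsets $x{\uparrow}$. Hence, to verify compatibility, it suffices to exhibit each of the sets $R_f^{(0)}[\overline{a}]$, $R_f^{(i)}[x, \overline{a}^{\, i}]$, $R_g^{(0)}[\overline{x}]$, $R_g^{(i)}[a, \overline{x}^{\,i}]$ as a principal upset or principal downset of $\mathbb{L}$, according to which side it lives on.

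Next I would use Birkhoff's isomorphism $\mathbb{P}^+_\mathbb{L} \cong \mathbb{L}$, under which $\mathbf{a}$ and $\mathbf{x}$ correspond to $a, x \in L$ and the order is preserved, so that $f^{\mathbb{P}^+_\mathbb{L}}$ and $g^{\mathbb{P}^+_\mathbb{L}}$ can be identified with $f^{\mathbb{L}}$ and $g^{\mathbb{L}}$. Under this identification, the defining clauses of $R_f$ and $R_g$ become $R_f(x, \overline{a}) \text{ iff } f^{\mathbb{L}}(\overline{a}) \leq x$ and $R_g(a, \overline{x}) \text{ iff } a \leq g^{\mathbb{L}}(\overline{x})$. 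The zeroth slices then give
\[
R_f^{(0)}[\overline{a}] = \{x \in L \mid f^{\mathbb{L}}(\overline{a}) \leq x\} = f^{\mathbb{L}}(\overline{a}){\uparrow}, \qquad R_g^{(0)}[\overline{x}] = \{a \in L \mid a \leq g^{\mathbb{L}}(\overline{x})\} = g^{\mathbb{L}}(\overline{x}){\downarrow},
\]
which are respectively a principal upset on the $X$-side and a principal downset on the $A$-side, hence Galois-stable.

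The main step, and where the normality of $\mathbb{A}$ enters essentially, is the treatment of the $i$-th slices for $1 \leq i \leq n_f$ (and similarly for $g$). Here I would argue by cases on whether $f$ is monotone or antitone in its $i$-th coordinate. If $f$ is monotone in the $i$-th coordinate, then since $\mathbb{A}$ is complete and $f^{\mathbb{L}}$ preserves arbitrary joins in that coordinate, the unary map $b \mapsto f^{\mathbb{L}}(\overline{a}^{\,i}_b)$ admits a right adjoint $(f_i^\sharp)$ at the parameter $\overline{a}^{\,i}$, and hence $R_f^{(i)}[x, \overline{a}^{\,i}] = \{b \in L \mid f^{\mathbb{L}}(\overline{a}^{\,i}_b) \leq x\} = f_i^\sharp(\overline{a}^{\,i}, x){\downarrow}$, a principal downset, as required (the $i$-th coordinate is $A$ in this case). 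If $f$ is antitone in the $i$-th coordinate, then $f^{\mathbb{L}}$ reverses joins there (sending them to meets), so by the general theory of Galois connections the map $b \mapsto f^{\mathbb{L}}(\overline{a}^{\,i}_b)$ admits an ``antitone adjoint'' at $\overline{a}^{\,i}$, yielding $R_f^{(i)}[x, \overline{a}^{\,i}]$ as a principal upset (here the $i$-th coordinate is $X$), again Galois-stable. The corresponding argument for $R_g^{(i)}[a, \overline{x}^{\,i}]$ is dual, using that $g^{\mathbb{L}}$ preserves arbitrary meets in monotone coordinates and reverses joins in antitone coordinates.

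The main obstacle is essentially bookkeeping: keeping track of which coordinate of $\overline{A}^{(n_f)}$ or $\overline{X}^{(n_g)}$ is $A$ versus $X$ according to the tonicity of $f$ or $g$, and verifying that in each case the existence of the appropriate (iso- or anti-tone) adjoint produces a principal up- or down-set on the correct side. Once this correspondence is set up, no further work is needed: the Galois-stability of all the required sets follows uniformly from the first observation that principal upsets and principal downsets exhaust the Galois-stable sets of $\mathbb{P}_\mathbb{L}$.
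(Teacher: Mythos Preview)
The paper does not supply a proof of this proposition; it is stated without argument in the appendix. Your plan is the natural one and is correct in outline: in $\mathbb{P}_{\mathbb{L}}=(L,L,\leq)$ the Galois-stable subsets on the $A$-side are exactly the principal downsets and on the $X$-side exactly the principal upsets, so compatibility reduces to showing that each $R_f^{(0)}[\overline{a}]$, $R_f^{(i)}[x,\overline a^{\,i}]$, $R_g^{(0)}[\overline{x}]$, $R_g^{(i)}[a,\overline x^{\,i}]$ is principal on the appropriate side, and this follows from the (complete) normality of $\mathbb{A}$ via residuation.

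One slip to fix: in the antitone case for $f$ you write that $f^{\mathbb{L}}$ ``reverses joins there (sending them to meets)''. By the paper's definition of a normal LE, $f$-connectives \emph{reverse meets} (send meets to joins) in antitone coordinates; it is the $g$-connectives that reverse joins in antitone coordinates. This matters for the direction of the adjunction you invoke: with the correct property, the unary map $b\mapsto f^{\mathbb{L}}(\overline a^{\,i}_b)$, viewed as a map $L^{op}\to L$, is completely join-preserving and hence has an upper adjoint, which is exactly what makes $\{b\mid f^{\mathbb{L}}(\overline a^{\,i}_b)\leq x\}$ a principal \emph{upset} of $L$ (and the $i$-th coordinate is indeed $X$ here, so this is the right kind of stability). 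Your stated premise would, if followed literally, yield the wrong shape of adjoint. The conclusion you draw is correct, but you should swap ``reverses joins'' for ``reverses meets'' in the $f$-antitone case; your description for $g$ is already correct.
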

%

	%
	


The {\em complex algebra} of  a polarity-based frame $\mathbb{F} = (\mathbb{P}, \mathcal{R}_\mathcal{F}, \mathcal{R}_\mathcal{G})$ for  $\mathcal{L}_{\text{LE}}$ is the algebra
$$\mathbb{F}^+ = (\mathbb{L}, \{f_{R_f}\mid f\in \mathcal{F}\}, \{g_{R_g}\mid g\in \mathcal{G}\}),$$
where $\mathbb{L} := \mathbb{P}^+$, and for all $f \in \mathcal{F}$ and all $g \in \mathcal{G}$,
we let

{\footnotesize
\begin{enumerate}
	
	\item $ f_{R_f}: \mathbb{L}^{n_f}\to \mathbb{L}$ be defined by the assignment $f_{R_f}(\overline{c}) = \left(\left(R_f^{(0)}[\overline{\val{c}}^{(n_f)}]\right)^{\downarrow}, R_f^{(0)}[\overline{\val{c}}^{(n_f)}]\right)$;
	
	\item $g_{R_g}: \mathbb{L}^{n_g}\to \mathbb{L}$ be defined by the assignment  $g_{R_g}(\overline{c}) = \left(R_g^{(0)}[\overline{\descr{c}}^{(n_g)}], \left(R_g^{(0)}[\overline{\descr{c}}^{(n_g)}]\right)^{\uparrow}\right)$. 
	
\end{enumerate}
}
\noindent
Here $\overline{\val{c}}^{(n_f)}$ denotes the $n_f$-tuple of $\val{c}$ and $\descr{c}$ such that for
each $1 \leq i \leq n_f$ if $f$ is monotone in its $i$-th coordinate then the $i$-th coordinate of $\overline{\val{c}}^{(n_f)}$ is $\val{c}$, whereas if $f$ is antitone in its $i$-th coordinate then the $i$th coordinate of $\overline{\val{c}}^{(n_f)}$ is $\descr{c}$, and $\overline{\descr{c}}^{(n_g)}$ denotes the $n_g$-tuple of $\val{c}$ and $\descr{c}$ such that for
each $1 \leq i \leq n_g$ if $g$ is monotone in its $i$-th coordinate then the $i$-th coordinate of $\overline{\val{c}}^{(n_f)}$ is $\descr{c}$, whereas if $g$ is antitone in its $i$-th coordinate then the $i$-th coordinate of $\overline{\descr{c}}^{(n_g)}$ is $\val{c}$. 

\begin{prop}[cf.\ \cite{greco2018algebraic} Proposition 21]\label{prop:F plus is complete LE}
	If $\mathbb{F} = (\mathbb{P}, \mathcal{R}_\mathcal{F}, \mathcal{R}_\mathcal{G})$ is a polarity-based frame for  $\mathcal{L}_{\text{LE}}$, then $\mathbb{F}^+ = (\mathbb{L}, \{f_{R_f}\mid f\in \mathcal{F}\}, \{g_{R_g}\mid g\in \mathcal{G}\})$ is a complete lattice expansion. 
\end{prop}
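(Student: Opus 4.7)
By Birkhoff's representation theorem recalled above, $\mathbb{L}=\mathbb{P}^+$ is already a complete lattice, so it remains only to verify that (i) each $f_{R_f}$ and $g_{R_g}$ is a well-defined map with codomain $\mathbb{P}^+$, and (ii) the  normality conditions: each $f_{R_f}$ preserves arbitrary joins in every monotone coordinate and reverses arbitrary meets in every antitone coordinate, while each $g_{R_g}$ preserves arbitrary meets in every monotone coordinate and reverses arbitrary joins in every antitone coordinate. I will spell out the argument for $f_{R_f}$; the case of $g_{R_g}$ is dual and requires no genuinely new idea.

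\textbf{Well-definedness.} To show that $f_{R_f}(\overline{c})\in\mathbb{P}^+$ it suffices, by the characterization of formal concepts, to show that $R_f^{(0)}[\overline{\val{c}}^{(n_f)}]\subseteq X$ is Galois-stable, since then $(R_f^{(0)}[\overline{\val{c}}^{(n_f)}])^{\downarrow\uparrow}=R_f^{(0)}[\overline{\val{c}}^{(n_f)}]$ and the pair displayed in the definition is a concept. The key observation is that, directly from the definition of $R_f^{(0)}[-]$ and of the iterated direct image, one has
\[
R_f^{(0)}[\overline{\val{c}}^{(n_f)}]\ =\ \bigcap\bigl\{R_f^{(0)}[\overline{b}]\ \bigm|\ \overline{b}\in\overline{\val{c}}^{(n_f)}\bigr\},
\]
and by compatibility each $R_f^{(0)}[\overline{b}]$ is Galois-stable. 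Since Galois-stable subsets of $X$ are closed under arbitrary intersections (they form the image of the closure operator $(\cdot)^{\downarrow\uparrow}$), the intersection is Galois-stable, as required.

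\textbf{Normality.} Fix a coordinate $i$ and, to fix ideas, assume $f$ is monotone in $i$; I need to show $f_{R_f}(\overline{c}^{\,i}_{\bigvee_j d_j}) = \bigvee_j f_{R_f}(\overline{c}^{\,i}_{d_j})$ for any family $\{d_j\}$ in $\mathbb{P}^+$. Both sides are formal concepts, so it suffices to show that their intensions agree, i.e.
\[
R_f^{(0)}\bigl[\overline{\val{c}}^{\,(n_f),i}_{\val{\bigvee_j d_j}}\bigr]\ =\ \bigcap_j R_f^{(0)}\bigl[\overline{\val{c}}^{\,(n_f),i}_{\val{d_j}}\bigr].
\]
The inclusion $\subseteq$ is immediate from monotonicity of the direct image, using $\val{d_j}\subseteq\val{\bigvee_j d_j}$. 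For $\supseteq$, write $\val{\bigvee_j d_j}=\bigl(\bigcup_j \val{d_j}\bigr)^{\uparrow\downarrow}$ (which follows from the formulas recalled for joins in $\mathbb{P}^+$). By the same set-theoretic identity as in well-definedness, one has
\[
R_f^{(0)}\bigl[\overline{\val{c}}^{\,(n_f),i}_{\bigcup_j\val{d_j}}\bigr]\ =\ \bigcap_j R_f^{(0)}\bigl[\overline{\val{c}}^{\,(n_f),i}_{\val{d_j}}\bigr],
\]
so it remains to show $R_f^{(0)}[\overline{\val{c}}^{\,(n_f),i}_{B^{\uparrow\downarrow}}]=R_f^{(0)}[\overline{\val{c}}^{\,(n_f),i}_{B}]$ for any $B\subseteq A$. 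The nontrivial inclusion $\supseteq$ is where the second compatibility condition on $R_f$ enters: if $x$ lies in the right-hand side, then for every choice of $\overline{a}^{\,i}\in \overline{\val{c}}^{\,(n_f),i}$ the slice $R_f^{(i)}[x,\overline{a}^{\,i}]\subseteq A$ contains $B$; by compatibility this slice is Galois-stable, hence contains $B^{\uparrow\downarrow}$; this holds for every $\overline{a}^{\,i}$, so $x\in R_f^{(0)}[\overline{\val{c}}^{\,(n_f),i}_{B^{\uparrow\downarrow}}]$. Applied with $B=\bigcup_j\val{d_j}$ this yields $\supseteq$ in the displayed equality and completes the case of a monotone coordinate. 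For an antitone coordinate, the $i$-th component of $\overline{\val{c}}^{(n_f)}$ is $\descr{c}$, meets in $\mathbb{P}^+$ are computed by intersection of extensions, and the role of the compatibility condition is played by the Galois-stability of the corresponding slice on the $X$-side; the same bookkeeping shows that $f_{R_f}$ turns meets in coordinate $i$ into joins, i.e. reverses arbitrary meets.

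\textbf{The case of $g_{R_g}$.} Exactly the same pattern, with the roles of $A$ and $X$, of $\val{\cdot}$ and $\descr{\cdot}$, and of $(\cdot)^\uparrow$ and $(\cdot)^\downarrow$ interchanged, shows that $g_{R_g}$ is well-defined and that it preserves arbitrary meets in every monotone coordinate and reverses arbitrary joins in every antitone coordinate.

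\textbf{Main obstacle.} There is no deep difficulty beyond the book-keeping; the only genuinely substantive step is the reduction $R_f^{(0)}[\overline{B}^{\,i}_{B^{\uparrow\downarrow}}]=R_f^{(0)}[\overline{B}^{\,i}_B]$, which must be justified \emph{coordinatewise} by the appropriate compatibility clause (Galois-stability of $R_f^{(i)}[x,\overline{a}^{\,i}]$ in the monotone case, and of its $X$-side analogue in the antitone case). Getting the notational conventions for $\overline{\val{c}}^{(n_f)}$, $\overline{C}^{\,i}_{A'}$ and $S^{(i)}$ straight is where the bulk of the care must be exercised.
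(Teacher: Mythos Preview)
The paper does not actually supply a proof of this proposition: it is stated with the citation ``cf.\ \cite{greco2018algebraic} Proposition 21'' and no argument is given in the present paper. So there is nothing to compare your proposal against here.

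That said, your proof plan is correct and is the standard argument one finds in the literature (and presumably in the cited reference): well-definedness via the fact that $R_f^{(0)}[\overline{\val{c}}^{(n_f)}]$ decomposes as an intersection of the Galois-stable sets $R_f^{(0)}[\overline{b}]$, and coordinatewise complete normality via the compatibility of the slices $R_f^{(i)}[x,\overline{a}^{\,i}]$, which is precisely what allows the passage from $B$ to its Galois closure $B^{\uparrow\downarrow}$ in the $i$-th argument. Your identification of this last step as the only substantive point is accurate.
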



\subsection{Graph-based frames}\label{apendix:graph:based}

Here we define a graph-based frame and give a suitable expanded adjunction based on the adjunction between lattices and graphs described in Section \ref{ssec:polarities graphs}.

For any sets $A, B$ and any relation $S \subseteq A \times B$, we let, for any $A' \subseteq A$ and $B' \subseteq B$,
\begin{equation*}\label{eq:def:square brackets}A^{[1]}[A']:=\{b\mid \forall a(a\in A'\Rightarrow aS^cb) \}  \quad\quad S^{[0]}[B']:=\{a\mid \forall b(b\in B'\Rightarrow aS^cb) \}\footnote{We will sometimes abbreviate $E^{[0]}[X]$ and $E^{[1]}[Y]$ as $X^{[0]}$ and $Y^{[1]}$, respectively, for each $X, Y\subseteq Z$. If $X= \{x\}$ and  $Y = \{y\}$, we write $x^{[0]}$ and $y^{[1]}$ for $\{x\}^{[0]}$ and $\{y\}^{[1]}$, and write $X^{[01]}$ and $Y^{[10]}$ for $(X^{[0]})^{[1]}$ and $(Y^{[1]})^{[0]}$, respectively. Notice that $X^{[0]} = X^{\downarrow}$ and $Y^{[1]} = Y^{\uparrow}$, where the maps $(\cdot)^\downarrow$ and $(\cdot)^\uparrow$ are those associated with the polarity $\mathbb{P_X}$.}.\end{equation*}
Hence, $S^{[1]}[A'] = (S^c)^{(1)}[A']$ and $S^{[0]}[B'] = (S^c)^{(0)}[B']$. More generally, for all sets $A, B_1,\ldots B_n,$ and any relation $S \subseteq A \times B_1\times \cdots\times B_n$, for any $\overline{C}: = (C_1,\ldots, C_n)$ where $C_i\subseteq B_i$ and $1\leq i\leq n$ 
we let, for all $A'$,
\begin{eqnarray*}
	S^{[0]}[\overline{C}] := \{a \in A\mid  \forall \overline{b}(\overline{b}\in \overline{C} \Rightarrow aS^c \overline{b} ) \} \quad\quad S^{[i]}[A', \overline{C}^{\,i}] := S_i^{[0]}[\overline{C}^{\, i}_{A'}].
\end{eqnarray*}


\begin{definition}\label{def:graph:basedL:frm}
	A {\em graph-based frame} for $\mathcal{L}_{\text{LE}}$ is a tuple $\mathbb{F} = (\mathbb{X}, \mathcal{R}_{\mathcal{F}}, \mathcal{R}_{\mathcal{G}})$, where  $\mathbb{X} = (Z, E)$ is a reflexive graph, $\mathcal{R}_{\mathcal{F}} = \{R_f\mid f\in \mathcal{F}\}$, and $\mathcal{R}_{\mathcal{G}} = \{R_g\mid g\in \mathcal{G}\}$, such that  for each $f\in \mathcal{F}$ and $g\in \mathcal{G}$, the symbols $R_f$ and  $R_g$ respectively denote $(n_f+1)$-ary and $(n_g+1)$-ary relations on $\mathbb{P}_{\mathbb{X}}$
	%
	satisfying the following {\em compatibility} conditions:
	for all $z'  \in Z$, $\overline{z}'' \in Z^{n_f}$ and $\overline{z}''' \in Z^{n_g}$,
	\begin{align*}
	(R_g^{[0]}[\overline{z}'''])^{[10]} \subseteq R_g^{[0]}[\overline{z}'''] \qquad   & \qquad	(R_f^{[0]}[\overline{z}''])^{[01]} \subseteq R_f^{[0]}[\overline{z}''],\\
	\end{align*}
	and for every $1\leq i\leq n_g$ and every $1\leq j\leq n_f$ such that $g$ and $f$ are positive in their $i$th (resp.~$j$th) coordinate,
\begin{align*}	
	(R_g^{[i]}[z', \overline{z}^{\, i}])^{[01]} \subseteq R_g^{[i]}[z', \overline{z}^{\, i}] \qquad   &\qquad	(R_f^{[j]}[z', \overline{z}^{\, j}])^{[10]} \subseteq R_f^{[i]}[z', \overline{z}^{\, i}], \\
\end{align*}
	and for every $1\leq i\leq n_g$ and every $1\leq j\leq n_f$ such that $g$ and $f$ are negative in their $i$th (resp.~$j$th) coordinate,
\begin{align*}	
	(R_g^{[i]}[z', \overline{z}^{\, i}])^{[10]} \subseteq R_g^{[i]}[z', \overline{z}^{\, i}] \qquad   & \qquad	(R_f^{[j]}[z', \overline{z}^{\, j}])^{[01]} \subseteq R_f^{[i]}[z', \overline{z}^{\, i}]. \\
\end{align*}
	\end{definition}

\begin{remark}
Let $\mathbb{F}$ be a graph based frame as above. By Proposition \ref{prop:transitive forces Heyting}, when $E: = {\leq}$ is reflexive and transitive, the closure operator $(-)^{[10]}$ coincides with the upward closure  $(-){\uparrow}$ of $E$ (and  order-dually, the closure operator $(-)^{[01]}$ coincides with the downward closure  $(-){\downarrow}$ of $E$). In this case, the compatibility conditions on the additional relations of $\mathbb{F}$ are equivalent to the well-known interaction conditions between accessibility relations and order on Kripke frames for distributive modal logic.  As an illustration, let us show that   the compatibility conditions on $R_{\Box}$ project to  $({\leq}\circ R_{\Box}\circ {\leq})\subseteq R_{\Box}$. 

By \cite[Lemma 4]{graph-based-wollic}, the conditions $(R_{\Box}^{[1]}[b])^{[01]}\subseteq R_{\Box}^{[1]}[b]$ and $(R_{\Box}^{[0]}[y])^{[10]}\subseteq R_{\Box}^{[0]}[y]$ for all $b, y\in Z$ are respectively equivalent  to $R_{\Box}^{[0]}[Y^{[01]}] = R_{\Box}^{[0]}[Y]$ and  $(R_{\Box}^{[0]}[Y])^{[10]}\subseteq R_{\Box}^{[0]}[Y]$   for every $Y\subseteq Z$, which, as discussed above, can be equivalently rewritten as  $R_{\Box}^{[0]}[Y{\downarrow}] = R_{\Box}^{[0]}[Y]$ and $(R_{\Box}^{[0]}[Y]){\uparrow}\subseteq R_{\Box}^{[0]}[Y]$ for every $Y\subseteq Z$.

The former condition is clearly equivalent to $(R_{\Box}^{[0]}[Y{\downarrow}])^c = (R_{\Box}^{[0]}[Y])^c$, while the latter  requires $R_{\Box}^{[0]}[Y]$ to be an up-set, and is hence  equivalent to the one requiring $(R_{\Box}^{[0]}[Y])^c$ to be a down-set for every $Y\subseteq Z$, i.e.~$(R_{\Box}^{[0]}[Y])^{c}{\downarrow}\subseteq (R_{\Box}^{[0]}[Y])^c$.  Note that, for every $Y'\subseteq Z$, 
\begin{eqnarray*}
(R_{\Box}^{[0]}[Y'])^{c}: & = & (\{b\in Z\mid \forall y(y\in Y'\Rightarrow (b, y)\notin R_{\Box})\})^c\\
&  = & (\{b\in Z\mid \forall y(b R_{\Box}y \Rightarrow y\notin Y')\})^c\\
& = &\{b\in Z\mid \exists y(b R_{\Box}y \ \&\ y\in Y')\} \\
& = & R^{-1}_{\Box}[Y'].
\end{eqnarray*}
Hence, $(R_{\Box}^{[0]}[Y{\downarrow}])^c = (R_{\Box}^{[0]}[Y])^c$ and $(R_{\Box}^{[0]}[Y])^{c}{\downarrow}\subseteq (R_{\Box}^{[0]}[Y])^c$ can be equivalently rewritten as $R_{\Box}^{-1}[Y{\downarrow}] = R_{\Box}^{-1}[Y]$ and  $(R_{\Box}^{-1}[Y]){\downarrow}\subseteq R_{\Box}^{-1}[Y]$, respectively. These conditions are equivalent to $(R_{\Box}\circ {\leq})^{-1}[Y] = ({\geq}\circ R_{\Box}^{-1})[Y] =  R_{\Box}^{-1}[Y]$ and $({\leq}\circ R_{\Box})^{-1}[Y] = (R_{\Box}^{-1}\circ {\geq})[Y]\subseteq R_{\Box}^{-1}[Y]$, which are equivalent to $(R_{\Box}\circ {\leq})[Y] =  R_{\Box}[Y]$ and $({\leq}\circ R_{\Box})[Y] \subseteq R_{\Box}[Y]$, which are equivalent to $(R_{\Box}\circ {\leq})=  R_{\Box}$ and  $({\leq}\circ R_{\Box})\subseteq R_{\Box}$, as required.
\end{remark}


\begin{definition}  
	The associated  $\mathcal{L}_{\text{LE}}$-frame of any lattice expansion $\mathbb{A} = (\mathbb{L}, \mathcal{F}^{\mathbb{L}}, \mathcal{G}^{\mathbb{L}})$ is  defined by
	$\mathbb{F}_\mathbb{A} = 
	(\mathbb{X}_\mathbb{L}, {R}_\mathcal{F}, \mathcal{R}_\mathcal{G})$, where $\mathbb{X}_\mathbb{L}$ is defined as in Section \ref{ssec:polarities graphs}, $\mathcal{R}_{\mathcal{F}} = \{R_f\mid f\in \mathcal{F}\}$, and $\mathcal{R}_{\mathcal{G}} = \{R_g\mid g\in \mathcal{G}\}$, such that  for each $f\in \mathcal{F}$ and $g\in \mathcal{G}$, the relations $R_f$ and  $R_g$ are defined as follows:
	\begin{equation*} R_f (z, \overline{z}) \; \mbox{ iff }\;  f^{\mathbb{X}^+}(\overline{\mathbf{z}_s}^{(n_f)})\nleq \mathbf{z}_r \quad\quad R_g (z, \overline{z}) \; \mbox{ iff }\;   \mathbf{z}_s\nleq g^{\mathbb{X}^+}(\overline{\mathbf{z}_r}^{(n_g)}), \end{equation*}
	where the $i$-th component of $\overline{\mathbf{z}_s}^{(n_f)}$ is $\mathbf{z}_s$ 
	is ($\mathbf{z}_r$) if $f$ is monotone (antitone) in its $i$-th argument, and the $i$-th
	component of $\overline{\mathbf{z}_r}^{(n_g)}$ is $\mathbf{z}_r$ ($\mathbf{z}_s$) if $g$ is monotone (antitone) in its $i$-th argument.
\end{definition}

\begin{prop}
	The   $\mathcal{L}_{\text{LE}}$-frame $\mathbb{F}_{\mathbb{A}}$ associated with any lattice expansion $\mathbb{A} = (\mathbb{L}, \mathcal{F}^{\mathbb{L}}, \mathcal{G}^{\mathbb{L}})$ is a graph-based frame.
\end{prop}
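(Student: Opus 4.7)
The plan is to verify each requirement of Definition \ref{def:graph:basedL:frm} for $\mathbb{F}_{\mathbb{A}}$. Reflexivity of $\mathbb{X}_{\mathbb{L}}$ is immediate: for any $z = (F_z, J_z)\in Z$, the defining condition $F_z \cap J_z = \varnothing$ is verbatim $zEz$. So all that remains is to verify the compatibility conditions on $R_f$ and $R_g$. The uniform strategy is to rewrite each of the sets $R_f^{[0]}[\overline{z}]$, $R_g^{[0]}[\overline{z}]$, $R_f^{[j]}[z', \overline{z}^j]$, $R_g^{[i]}[z', \overline{z}^i]$ as either the extent $\val{c}$ or the intent $\descr{c}$ of some concept $c \in \mathbb{X}^+ = \mathbb{P}_{\mathbb{X}_{\mathbb{L}}}^+$. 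Since $\val{c}^{[10]} = \val{c}$ and $\descr{c}^{[01]} = \descr{c}$ by Galois-stability (the operations $(-)^{[1]}$ and $(-)^{[0]}$ coinciding with the maps $(-)^\uparrow$ and $(-)^\downarrow$ of $\mathbb{P}_{\mathbb{X}_{\mathbb{L}}}$), this immediately yields the required inclusions.

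For the ``$0$-th projection'' conditions, the translation is direct: unfolding the definitions gives $R_g^{[0]}[\overline{z}] = \{z' \mid \mathbf{z'}_s \leq g^{\mathbb{X}^+}(\overline{\mathbf{z}_r}^{(n_g)})\} = \val{g^{\mathbb{X}^+}(\overline{\mathbf{z}_r}^{(n_g)})}$, where the second identity uses $\mathbf{z'}_s \leq c$ iff $z' \in \val{c}$ (a consequence of the Galois-stability of $\val{c}$ together with Proposition \ref{prop:join and meet generators}). Dually one obtains $R_f^{[0]}[\overline{z}] = \descr{f^{\mathbb{X}^+}(\overline{\mathbf{z}_s}^{(n_f)})}$.

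For the ``$i$-th projection'' conditions the argument additionally invokes suitable Galois adjoints of the partial maps $c \mapsto g^{\mathbb{X}^+}(\overline{\mathbf{z}_r}^{i}_{c})$, and analogously for $f$. Since $g^{\mathbb{X}^+}$ is a complete LE-operation, this partial map is either completely meet-preserving (if $g$ is monotone in its $i$-th coordinate) or completely meet-reversing (if antitone), and in either case admits an appropriate (possibly antitone) adjoint into $\mathbb{X}^+$. Applying this adjoint to $\mathbf{z'}_s$ yields an element of $\mathbb{X}^+$ whose extent or intent---determined by the polarity of the coordinate---is precisely the set $R_g^{[i]}[z', \overline{z}^{\, i}]$, from which Galois-stability of this set follows.

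The main obstacle is not conceptual but organizational: the four compatibility conditions combined with the positive/negative coordinate cases yield eight subcases, and in each one must carefully track whether the resulting Galois-stable set is an extent or an intent, hence which closure operator $(-)^{[10]}$ or $(-)^{[01]}$ is the relevant one. In every subcase, however, the argument reduces to the same template of exhibiting an appropriate Galois adjoint of a coordinate-restriction of $f^{\mathbb{X}^+}$ or $g^{\mathbb{X}^+}$ and reading off the corresponding concept in $\mathbb{X}^+$.
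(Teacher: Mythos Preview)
The paper states this proposition without proof, so there is nothing to compare against directly. Your approach is correct and is the natural one: reflexivity of $E$ is indeed immediate from the definition of $Z$, and the compatibility conditions follow once each set $R_f^{[0]}[\overline{z}]$, $R_g^{[0]}[\overline{z}]$, $R_f^{[j]}[z',\overline{z}^{\,j}]$, $R_g^{[i]}[z',\overline{z}^{\,i}]$ is identified with the extent or intent of a suitable element of $\mathbb{X}^+$. Your computation $R_g^{[0]}[\overline{z}] = \val{g^{\mathbb{X}^+}(\overline{\mathbf{z}_r}^{(n_g)})}$ and its dual for $f$ are exactly right, and the use of the (possibly antitone) coordinate-wise adjoints of $f^{\mathbb{X}^+}$ and $g^{\mathbb{X}^+}$ to handle the $i$-th projections is the standard move; the existence of these adjoints is guaranteed precisely because the operations on $\mathbb{X}^+$ are complete LE-operations on a complete lattice. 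The bookkeeping you flag---matching the monotone/antitone polarity of each coordinate to whether the resulting set is an extent (closed under $(-)^{[10]}$) or an intent (closed under $(-)^{[01]}$)---is tedious but mechanical, and in each subcase lines up with the corresponding closure operator required by Definition~\ref{def:graph:basedL:frm}.
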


The {\em complex algebra} of a graph-based frame $\mathbb{F}= (\mathbb{X},  \mathcal{R}_{\mathcal{F}}, \mathcal{R}_{\mathcal{G}})$ is the algebra $\mathbb{F}^+ = (\mathbb{L}, \{f_{R_f}\mid f\in \mathcal{F}\}, \{g_{R_g}\mid g\in \mathcal{G}\})$,
where $\mathbb{L} := \mathbb{P}_\mathbb{X}^+$, and for all $f \in \mathcal{F}$ and all $g \in \mathcal{G}$,
we let

{\footnotesize
\begin{enumerate}
	
	\item $ f_{R_f}: \mathbb{L}^{n_f}\to \mathbb{L}$ be defined by the assignment $f_{R_f}(\overline{c}) = \left(\left(R_f^{[0]}[\overline{\val{c}}^{(n_f)}]\right)^{[0]}, R_f^{[0]}[\overline{\val{c}}^{(n_f)}]\right)$;
	
	\item $g_{R_g}: \mathbb{L}^{n_g}\to \mathbb{L}$ be defined by the assignment  $g_{R_g}(\overline{c}) = \left(R_g^{[0]}[\overline{\descr{c}}^{(n_g)}], \left(R_g^{[0]}[\overline{\descr{c}}^{(n_g)}]\right)^{[1]}\right)$. 
	
\end{enumerate}
}
%
%
\noindent
As before $\overline{\val{c}}^{(n_f)}$ denotes the $n_f$-tuple of $\val{c}$ and $\descr{c}$ such that for
each $1 \leq i \leq n_f$ if $f$ is monotone in its $i$-th coordinate then the $i$-th coordinate of $\overline{\val{c}}^{(n_f)}$ is $\val{c}$, whereas if $f$ is antitone in its $i$-th coordinate then the $i$-th coordinate of $\overline{\val{c}}^{(n_f)}$ is $\descr{c}$, and similarly  for $\overline{\descr{c}}^{(n_g)}$. 

\begin{prop}\label{prop: graph: F plus is complete LE}
	If $\mathbb{F} = (\mathbb{X}, \mathcal{R}_\mathcal{F}, \mathcal{R}_\mathcal{G})$ is a graph-based frame for  $\mathcal{L}_{\text{LE}}$, then $\mathbb{F}^+ = (\mathbb{P}^+_\mathbb{X}, \{f_{R_f}\mid f\in \mathcal{F}\}, \{g_{R_g}\mid g\in \mathcal{G}\})$ is a complete lattice expansion. 
\end{prop}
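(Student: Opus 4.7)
My plan is to reduce this proposition essentially to its polarity-based analogue (Proposition \ref{prop:F plus is complete LE}). The key observation is that, by construction, $\mathbb{F}^+$ is defined via the concept lattice $\mathbb{P}_\mathbb{X}^+$ of the polarity $\mathbb{P}_\mathbb{X} = (Z_A, Z_X, I_{E^c})$, and the closure operators $(\cdot)^{[10]}$ and $(\cdot)^{[01]}$ appearing in the compatibility conditions are precisely the Galois closures $(\cdot)^{\uparrow\downarrow}$ and $(\cdot)^{\downarrow\uparrow}$ associated with $\mathbb{P}_\mathbb{X}$. Hence the graph-based frame $\mathbb{F}$ can be viewed as giving rise to a polarity-based frame structure on $\mathbb{P}_\mathbb{X}$ (modulo the uniform sorting $Z_A = Z_X = Z$ which gets differentiated by the coordinate-type of each argument of $f$ and $g$), and the compatibility conditions of Definition \ref{def:graph:basedL:frm} translate directly into the $I_{E^c}$-compatibility conditions required in Definition \ref{def:polarity:based:frm}.

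The first step is to observe that $\mathbb{P}_\mathbb{X}^+$ is a complete lattice (this is immediate from the general theory of concept lattices recalled in Section \ref{ssec:polarities graphs}). The second step is to verify that each $f_{R_f}$ and $g_{R_g}$ is well-defined, i.e.~sends tuples of formal concepts to formal concepts. For $g_{R_g}(\overline{c})$, one must show that $R_g^{[0]}[\overline{\descr{c}}^{(n_g)}]$ is Galois-stable, i.e.~$(R_g^{[0]}[\overline{\descr{c}}^{(n_g)}])^{[10]} \subseteq R_g^{[0]}[\overline{\descr{c}}^{(n_g)}]$. This is obtained by first reducing the stability of $R_g^{[0]}[\overline{\descr{c}}^{(n_g)}]$ to the stability of the singleton-parametrised sections $R_g^{[0]}[\overline{z}''']$ and $R_g^{[i]}[z', \overline{z}^{\,i}]$ appearing in the compatibility conditions (using that $R_g^{[0]}[\bigcup_\alpha S_\alpha] = \bigcap_\alpha R_g^{[0]}[S_\alpha]$ and that intersections of Galois-stable sets are Galois-stable), and then observing that each $\descr{c}_i$ is already Galois-stable, so we can write $\descr{c}_i = \bigcap\{z^{[1]} \mid z \in \descr{c}_i^{[0]}\}$ (or analogously for extensions in antitone coordinates) and chain the coordinate-wise stability. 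The dual argument handles $f_{R_f}$.

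The third and main step is to establish the required coordinate-wise preservation/reversal of arbitrary joins and meets. Here one exploits the concrete descriptions of joins and meets in $\mathbb{P}_\mathbb{X}^+$ recalled in Section \ref{ssec:polarities graphs}: the intensional part of a join is the intersection of intensional parts, and the extensional part of a meet is the intersection of extensional parts. Because $R_g^{[0]}[-]$ turns unions (coordinatewise) into intersections in each coordinate, one checks by a direct calculation that, for each monotone coordinate $i$ of $g$, plugging in $\bigwedge_\alpha c^\alpha_i$ yields $\descr{\bigwedge_\alpha c^\alpha_i} = (\bigcap_\alpha \val{c^\alpha_i})^{[1]}$, from which, using stability and the description $R_g^{[0]}[\overline{\descr{c}}^{(n_g)}] = \bigcap_\alpha R_g^{[0]}[\overline{\descr{c}}^{(n_g),\alpha}]$, one reads off $g_{R_g}(\ldots, \bigwedge_\alpha c^\alpha_i, \ldots) = \bigwedge_\alpha g_{R_g}(\ldots, c^\alpha_i, \ldots)$; the antitone coordinates are handled analogously using that the extension of a join is the closure of the intersection of extensions, which converts joins in the argument to meets in the output. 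The argument for $f_{R_f}$ is order-dual.

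The main obstacle will be the bookkeeping: disentangling the sorting conventions (the same $Z$ plays the role of both $Z_A$ and $Z_X$, with the role determined coordinate-by-coordinate from the monotonicity of $f$ or $g$), and ensuring that the several forms of the compatibility condition in Definition \ref{def:graph:basedL:frm} are precisely the ones needed to push the Galois-stability of the outputs through each coordinate. Once the correspondence with the polarity-based setup is set up carefully, the calculations mirror those in the proof of Proposition \ref{prop:F plus is complete LE} almost verbatim, so no essentially new ideas are required beyond the translation between the $(\cdot)^\uparrow, (\cdot)^\downarrow$ notation of polarities and the $(\cdot)^{[0]}, (\cdot)^{[1]}$ notation of graphs.
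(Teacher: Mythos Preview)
Your approach is correct and is in fact the intended one: the paper does not supply an explicit proof of this proposition, but its placement and formulation make clear that it is meant to follow from Proposition~\ref{prop:F plus is complete LE} by exactly the reduction you describe. Concretely, since $S^{[i]}[-] = (S^c)^{(i)}[-]$ and $(\cdot)^{[0]} = (\cdot)^{\downarrow}$, $(\cdot)^{[1]} = (\cdot)^{\uparrow}$ on $\mathbb{P}_\mathbb{X}$, the graph-based frame $\mathbb{F} = (\mathbb{X}, \{R_f\}, \{R_g\})$ determines the polarity-based frame $(\mathbb{P}_\mathbb{X}, \{R_f^c\}, \{R_g^c\})$, the compatibility conditions of Definition~\ref{def:graph:basedL:frm} translate verbatim into the $I_{E^c}$-compatibility conditions of Definition~\ref{def:polarity:based:frm}, and the two complex-algebra constructions coincide on the nose. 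So your reduction is not merely ``essentially the same'' as a direct argument---it \emph{is} the argument the paper's setup is designed to enable, and nothing further is needed.
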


\newpage

\paragraph{Willem Conradie} is currently an associate professor of mathematics at the University of the Witwatersrand, Johannesburg. He holds an MSc in Mathematical Logic from the Institute for Logic, Language and Computation at the University of Amsterdam and a PhD in Mathematics from the University of the Witwatersrand. His research interests lie in non-classical logics where he has made contributions to the correspondence and completeness theory of a range of extensions and generalizations of modal logic, and also to the theory and applications of hybrid logics, temporal logics and non-distributive logics. 

\paragraph{Alessandra Palmigiano} holds the Chair of Logic and Management Theory at the School of Business and Economics of the Vrije Universiteit Amsterdam, and is visiting associate professor at the Department of Mathematics and Applied Mathematics at the University of Johannesburg. She holds an MSc in Mathematical Logic from the Institute for Logic, Language and Computation at the University of Amsterdam, and a PhD in Logic and Foundations of Mathematics from the University of Barcelona (Spain). Her research interests lie in the development of algebraic, duality-theoretic, and proof-theoretic methods in non-classical logics and their application to the analysis of social behaviour.

\paragraph{Claudette Robinson} is currently a lecturer in the Department of Mathematics and Applied Mathematics at the University of Johannesburg. Before that she was a postdoctoral researcher in the School of Computer Science and Applied Mathematics at the University of the Witwatersrand. She received a PhD in Mathematics in 2015 from the University of Johannesburg. Her project focused on developing algebraic methods for hybrid logics and applying them to solve logical problems in the fi eld of hybrid logics. Her research interests include modal and temporal logics, hybrid logic, non-classical logics, lattice theory and universal algebra.

\paragraph{Nachoem M. Wijnberg} is professor at the University of Amsterdam Business School. Before that he was professor at the University of Groningen. He is visiting professor at the university of Johannesburg and before held visiting positions at Cass Business School in London and at the Institute for Innovation Research at Hitotsubashi University in Tokyo. He  has a master’s degree in law and a master’s degree in economics from the University of Amsterdam and a PhD from the Rotterdam School of Management.
The focus of his present research is on integrating formal models of logic and AI with theoretical approaches from economics and management science especially concerning the interactions between categorization and competition.
His poetry received the most important Dutch and Belgian literary awards and is translated in many languages.

\end{document}